\def\p{\partial}
\def\R{\mathbb{R}}
\def\vv<#1>{\langle#1\rangle}
\def\ol{\overline}
\def\1{\mathbf{1}}
\def\Comb{{\rm Comb}}
\def\Span{{\rm Span}}
\def\Spec{{\rm Spec}}
\def\Clump{{\rm Clump}}
\def\e{\epsilon}
\def\XXint#1#2{\setbox0=\hbox{$#1{#2}{\int}$}{#2}\kern-.5\wd0 }
\def\XXint#1#2#3{{\setbox0=\hbox{$#1{#2#3}{\int}$}
     \vcenter{\hbox{$#2#3$}}\kern-.5\wd0}}
\def\vv<#1>{{\left\langle#1\right\rangle}}
\def\wh{\widehat}
\def \St{{\rm St}}
\def\Length{{\rm Length}}
\def\DB{{\rm DB}}
\newtheorem{thm}{Theorem}[section]
\newtheorem{lem}{Lemma}[section]
\newtheorem{prop}{Proposition}[section]
\newtheorem{cor}{Corollary}[section]
\theoremstyle{definition}
\newtheorem{defn}{Definition}[section]
\theoremstyle{remark}
\newtheorem{rem}{Remark}[section]
\numberwithin{equation}{section}
\def \wt{\widetilde}
\def\Br{{\rm Br}}
\begin{document}
\title{Minimal Steklov eigenvalues on combinatorial graphs}

\author{Chengjie Yu$^1$}
\address{Department of Mathematics, Shantou University, Shantou, Guangdong, 515063, China}
\email{cjyu@stu.edu.cn}
\author{Yingtao Yu}
\address{Department of Mathematics, Shantou University, Shantou, Guangdong, 515063, China}
\email{18ytyu@stu.edu.cn}
\thanks{$^1$Research partially supported by GDNSF with contract no. 2021A1515010264 and NNSF of China with contract no. 11571215.}
\renewcommand{\subjclassname}{%
  \textup{2010} Mathematics Subject Classification}
\subjclass[2010]{Primary 05C50; Secondary 39A12}
\date{}
\keywords{Steklov eigenvalue, extremal problem, nodal domain}
\begin{abstract}
In this paper, we study extremal problems of Steklov eigenvalues on combinatorial graphs by extending Friedman's theory [Duke Math. J. 69 (1993), no. 3, 487--525] of nodal domains for Laplacian eigenfunctions to Steklov eigenfunctions, and solve an extremal problem for Steklov eigenvalues on combinatorial graphs that is an analogue of the extremal problem solved by Friedman [Duke Math. J. 83 (1996), no. 1, 1--18.] for Laplacian eigenvalues.  More precisely, we mainly show that the minimum of the $i^{\rm th}$ Steklov eigenvalue on a connected combinatorial graph with $n$ vertices is essentially attained by a star with each arm a minimal broom when $i\not|n$, and attained by a regular comb with each tooth a minimal broom when $i|n$.
\end{abstract}
\maketitle\markboth{C. Yu \& Y. Yu}{Extremal problems for Steklov eigenvalues}
\section{Introduction}
For a compact Riemannian manifold with boundary, the Steklov operator or Dirichlet-to-Neumann map (DtN map for short) is defined to be the map sending the Dirichlet boundary data of a harmonic function on the manifold to its Neumann boundary data. The eigenvalues of the Steklov operator are called Steklov eigenvalues of the manifold. Such notions were originated in Steklov's study of liquid sloshing more than a century ago (See \cite{St,Ku}). Steklov operators or DtN maps are also  original models for inverse problems such as detecting the inside of a body by boundary measurements. Escobar \cite{Es} also found relations between Steklov eigenvalues and the Yamabe problems on Riemannian manifolds with boundary. In recently years, Fraser and Scheon  \cite{FS11} found relations between Steklov eigenfunctions and free boundary minimal submanifolds in the Euclidean ball and made important progresses on extremal problems of Steklov eigenvalues (\cite{FS11,FS16,FS19,FS20}).

The notions of Steklov operators and Steklov eigenvalues were recently extended to discrete settings independently by Hua, Huang and Wang in \cite{HHW} and Hassannezhad and Miclo in \cite{HM}. Colbois and Girouard \cite{CG14} also considered similar extensions of Steklov eigenvalues to the discrete settings when studying Steklov eigenvalues on manifolds constructed out of graphs (See also \cite{CG18}). Although the notions are new, there have been  a number of studies on exploring the properties of Steklov eigenvalues in the discrete setting. For example, in \cite{HM,HHW,HHW2,HH,He-Hua1,Pe2,Ts}, the authors considered isoperimetric control of Steklov eigenvalues. In \cite{He-Hua2,YY}, the authors considered monotonicity of Steklov eigenvalues. In \cite{Pe1}, the author gave some interesting lower bounds. In \cite{SY2}, the authors compared the Steklov eigenvalues with the Laplacian eigenvalues. In \cite{SY1}, the authors obtained a Lichnerowicz-type estimate for Steklov eigenvalues. In \cite{SY}, the authors introduced higher degree versions of Steklove operators and Steklov eigenvalues.

In this paper, motivated by the works of Fraser and Schoen in the smooth setting and Friedman's works \cite{FR93,FR96} for Laplacian eigenvalues on combinatorial graphs, and also motivated by the works \cite{He-Hua1,He-Hua2} of He and Hua, and our previous work \cite{YY}, we consider extremal problems on combinatorial graphs. Let's first recall some preliminary notions for graphs.
\begin{defn}\label{def-weighted-graph-boundary}
\begin{enumerate}
\item A triple $(G,m,w)$ is called a weighted graph where
\begin{enumerate}

\item $G$ is a simple graph with $V(G)$ and $E(G)$ the sets of vertices and edges of $G$ respectively;
\item $m:V(G)\to \R^+$ is the vertex-measure;
\item $w:E(G)\to \R^+$ is the edge-weight.
\end{enumerate}
When $m\equiv1$ and $w\equiv 1$, we say that $G$ is equipped with the unit weight.  A subgraph $G'$ of the weighted graph $G$ is viewed as a weighted graph by restricting the vertex-measure and edge-weight of $G$ to $G'$. For convenience, the edge-weight $w$ is also viewed as a symmetric function on $V(G)\times V(G)$ by zero extension.
\item A pair $(G, B)$ is called a graph with boundary where $G$ is a simple graph and $B\subset V(G)$. The set $\Omega:=V(G)\setminus B$  is called the interior of $G$. We will also denote $B$ and $\Omega$ as $B(G)$ and $\Omega(G)$ respectively if necessary.
\item A simple graph $G$ is called a combinatorial graph with boundary (or is called a combinatorial graph for simplicity) if it is equipped with the unit weight and $B(G)=\{v\in V(G)\ |\ \deg v\leq1\}$. For a nontrivial combinatorial tree $G$,  $B(G)=L(G)$ by definition, where $L(G)$ is the collection of leaves in $G$.
\end{enumerate}
\end{defn}
On a weighted finite graph $(G,B,m,w)$ with boundary, analogous to the smooth case, one can similarly define the Steklov operator $\Lambda:\R^B\to \R^B$ (See Section \ref{sec-pre} for details). The eigenvalues of $\Lambda$ are called Steklov eigenvalues of the graph and denoted as
$$0=\sigma_1(G,B,m,w)\leq \sigma_2(G,B,m,w)\leq\cdots\leq\sigma_{|B|}(G,B,m,w).$$
For simplicity, we will also write $\sigma_i(G,B,m,w)$ as $\sigma_i$, $\sigma_i(G)$ or $\sigma_i(G,B)$ if the ignored information is clear in context. When $i>|B|$, we conventionally take $\sigma_i=+\infty$. For the trivial graph $G$ with only one vertex, the vertex is a boundary vertex by definition and hence $\sigma_1(G)=0$ and $\sigma_i(G)=+\infty$ for $i\geq 2$.

The extremal problem we considered in this paper is:
\emph{Let $G$ be a connected combinatorial graph on $n$ vertices. What is the minimum of $\sigma_i(G)$?} This problem is an analogue of the problem considered by Friedman \cite{FR96} for Laplacian eigenvalues.

Note that the notion of graphs with boundary considered in this paper is more general than that in \cite{Pe1}. The main difference is that we don't require that $E(B,B)=\emptyset$ in (2) of Definition \ref{def-weighted-graph-boundary}. In fact, this kind of more general graphs with boundary was considered in \cite{CG14,CG18}. Note that if all the edges in $E(B,B)$ are deleted, we get a graph with boundary under the more special  definition in \cite{Pe1} in general. Moreover, the process of deleting the edges in $E(B,B)$ will reduce the Steklov eigenvalues by the simple monotonicity of Steklov eigenvalues in Theorem \ref{thm-mono} below. Thus, there will be no difference in general when considering minimum problems on Steklov eigenvalues for the two kinds of graphs with boundary. The reason that we take this more general notion of graphs with boundary is twofold. One is that the notion is simpler and more general, and the other is that the path with two vertices can not be viewed as a combinatorial graph with boundary  if we take the more special definition in \cite{Pe1}.

Our answer to the extremal problem for Steklov eigenvalues is similar to that of Friedman \cite{FR96} for Laplacian eigenvalues. Follows is a summary of the main results of this paper.

\begin{thm}\label{thm-main}
Let $G$ be a connected finite combinatorial graph on  $n\geq 2$ vertices. Then,
\begin{enumerate}
\item $\sigma_2(G)\geq \Lambda(\frac{n-1}{2})$. The equality holds if and only if \begin{enumerate}
    \item $G=\DB(m,2m,m)$ when $n=4m+1$;
    \item $G=\DB(m,2m+1,m)$ when $n=4m+2$;
    \item $G=\DB(m+1,2m,m+1), \DB(m,2m+2,m)\ \mbox{or}\ \DB(m,2m+1,m+1)$ when $n=4m+3$;
    \item $G=\DB(m+1,2m+1,m+1)$ when $n=4m+4$;
    \end{enumerate}
\item for $i$ with $2< i< n$ and $i\not|\ n$, $$\sigma_i(G)\geq \Lambda(m)$$
where $m=\lfloor \frac{n}{i}\rfloor$. Moreover, when  $n=im+1$, the equality holds if and only if $G$ is a star of degree $i$ such that each arm is a minimal broom $\Br(m)$ with the center of the star as the root.
\item for $i$ with $2< i< n$ and $i|n$,
\begin{equation*}
\sigma_i(G)\geq \Lambda(m-1+\theta_i)
\end{equation*}
where $m=\frac{n}{i}$ and $\theta_i=\frac{1}{4\cos^2\frac{\pi}{2i}}$. The equality holds if and only if $G=\Comb(P_i;T)$ when $i$ is even, and $G=\Comb(P_i;T)\mbox{ or }\Comb(C_i;T)$ when $i$ is odd. Here $P_i$ and $C_i$ are the path and the cycle on $i$ vertices respectively. Moreover, $T$ is the minimal broom $\Br(m-1+\theta_i)$ with the Dirichlet boundary vertex deleted and the vertex adjacent to the Dirichlet boundary vertex as the root.
\end{enumerate}
Here,
\begin{equation*}
\Lambda(l)=\left\{\begin{array}{ll}\frac1l&0<l<1\\\frac{1}{1+\lfloor\frac{l+1}2\rfloor(l-\lfloor\frac{l+1}2\rfloor)}&l\geq 1.\end{array}\right.
\end{equation*}
For the definitions of the minimal broom $\Br(l)$, dumbbell $\DB(d_1,i,d_2)$ and regular combs $\Comb(P_i;T)$ and $\Comb(C_i;T)$, see Definition \ref{def-min-broom-l}, Definition \ref{def-dumbbell} and Definition \ref{def-comb} in  Section \ref{sec-pre} respectively.
\end{thm}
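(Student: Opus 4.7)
The plan is to adapt Friedman's nodal-domain technique \cite{FR93,FR96} from the Laplacian setting to Steklov eigenfunctions and reduce the global estimate to a sharp local estimate on each nodal domain. Given an eigenfunction $\varphi$ realizing $\sigma_i$, I would first harmonically extend $\varphi$ to the interior to obtain a function $\tilde\varphi$ on $V(G)$, and partition the vertex set into the maximal connected sign components (weak nodal domains) of $\tilde\varphi$, dealing with vanishing vertices by a generic perturbation of the graph weights. A Steklov analogue of Courant's nodal domain bound, proved along Friedman's lines, should guarantee at least $i$ such nodal domains $N_1,\ldots,N_k$.

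On each nodal domain $N_j$, viewed as a subgraph with a mixed boundary (Steklov on $B(G)\cap N_j$, Dirichlet on the newly-created nodal boundary $\partial N_j$), the restriction $\varphi|_{N_j}$ is a first eigenfunction of the corresponding mixed problem, with eigenvalue equal to $\sigma_i$. The key local step is then to prove that for any such mixed-boundary subgraph whose combinatorial size is measured by an effective parameter $\ell$, the first mixed eigenvalue is at least $\Lambda(\ell)$, with equality precisely on the minimal broom $\Br(\ell)$. The piecewise formula for $\Lambda(l)$ is exactly what one obtains by a direct computation on a path with a Dirichlet end and a Steklov end, after optimizing the position of the single fractional "heavy'' vertex. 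Combined with the monotonicity of Steklov eigenvalues (Theorem \ref{thm-mono}) and a rearrangement/edge-contraction argument, this should reduce any attaining configuration to a minimal broom.

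Counting interior vertices across the $i$ nodal domains and applying the local bound to the smallest one gives $\sigma_i\ge\Lambda(\ell_{\min})$ with $\ell_{\min}\le(n-1)/2$ in case (1), $\ell_{\min}\le\lfloor n/i\rfloor$ in case (2), and the delicate bound $\ell_{\min}\le m-1+\theta_i$ in case (3). For equality, rigidity in the local bound forces each attaining nodal domain to be (isomorphic to) a minimal broom; the way these brooms can be glued across their shared nodal boundary, together with the harmonicity of $\tilde\varphi$ on $G$ and the structure of $\varphi$ on $B$, then forces $G$ to be the dumbbell $\DB(\cdot,\cdot,\cdot)$ in (1), a star of brooms with a common center in (2), and a comb $\Comb(P_i;T)$ (or, when $i$ is odd, $\Comb(C_i;T)$) in (3).

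The principal obstacle is case (3): since $i\mid n$, the vertex-counting pigeonhole has no slack, and one must extract the correction $\theta_i=\frac{1}{4\cos^2(\pi/2i)}$. This forces a finer analysis of how adjacent nodal domains communicate through the "spine'' $P_i$ or $C_i$, and it is here that the factor $\cos\frac{\pi}{2i}$ enters, coming from the smallest Dirichlet (or twisted) eigenvalue of those graphs. A secondary difficulty lies in the rigidity portion of the local bound itself: showing that among all mixed-boundary subgraphs of a given effective size $\ell$, only the minimal broom attains $\Lambda(\ell)$, which I expect to handle by reducing to the tree case via edge-deletion monotonicity and then to a path by an iterative "straightening'' of branches that can only decrease the first mixed eigenvalue.
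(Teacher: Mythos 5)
Your outline captures the right general flavor (monotonicity to reduce to trees, nodal domains, a sharp local bound attained by minimal brooms), but it hinges on a step that is not available: a Courant-type \emph{lower} bound guaranteeing that an eigenfunction of $\sigma_i$ has at least $i$ nodal domains. Courant/Friedman-type theorems give an \emph{upper} bound of $i$ on the number of nodal domains; the lower bound fails in general (an $i$-th eigenfunction can have as few as two sign domains), and even the tree version of such an oscillation theorem requires non-vanishing at vertices and has not been established for Steklov eigenfunctions. Your proposed fix by ``generic perturbation of the weights'' also destroys the equality analysis, since the extremal graphs are specific unit-weight combinatorial graphs. The paper sidesteps this entirely: nodal domains are used only for $\sigma_2$ of a single tree (where the eigenfunction changes sign because it is orthogonal to constants, so there are at least two nodal domains, giving Theorem \ref{thm-Steklov-clump}: $\sigma_2\ge\Lambda(\Clump(G))$); for higher $i$ one instead removes at most $i-2$ (resp.\ $i-1$) edges using Friedman's clump-number lemmas (Propositions \ref{prop-clump-tree-1}, \ref{prop-clump-tree-2}, \ref{prop-tree-sub-k}, \ref{prop-tree-AB}) to produce a forest $F$ with at most $i-1$ components and small clump numbers, and then uses $\sigma_i(G)\ge\sigma_i(F)\ge\min_j\sigma_2(T_j)$. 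Without either the (false) nodal count or this edge-removal machinery, your pigeonhole ``apply the local bound to the smallest of the $i$ nodal domains'' has nothing to stand on.

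Two further points. First, in case (3) the correction $\theta_i$ does not emerge from ``how adjacent nodal domains communicate'': the paper's mechanism is to cut the tree into $i$ pieces of size $m-1$, form the quotient tree $G'$ on these pieces, take the \emph{top} Laplacian eigenvalue $\mu_i(G')\ge\mu_i(P_i)=4\cos^2\frac{\pi}{2i}$ with its alternating-sign eigenfunction (Propositions \ref{prop-min-top} and \ref{prop-top-eigen}), use its zeros to assign each piece a Dirichlet boundary of total weight $\mu_i(G')$ (Corollary \ref{cor-top-eigen}), and then run a dimension count to find a test function in the span of the first $i$ Steklov eigenfunctions vanishing at all the cut points; this concrete construction is absent from your sketch. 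Second, be careful with the local bound itself: the minimizer of $\lambda_1$ among trees of total length $\ell$ with one Dirichlet vertex is a \emph{broom}, not a path (a path gives only $1/\ell$, whereas $\Lambda(\ell)\approx 4/\ell^2$), so an iterative ``straightening of branches to a path'' would move you \emph{away} from the extremal configuration; the correct rearrangement (Theorem \ref{thm-lambda-1}) pushes the interior onto a path but accumulates all remaining leaves at its far end.
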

Note that when $m\geq 3$ is an odd number, there are two different brooms in $\Br(m)$ (See Lemma \ref{lem-broom-l}). Thus, when $m$ is an odd number not less than $3$, there are $i+1$ different stars of degree $i$ with all arms minimal brooms $\Br(m)$. So, in  Theorem \ref{thm-main}, when $n=im+1$ with $m$ an odd number not less than $3$, there are $i+1$ different graphs so that the equality $\sigma_i(G)=\Lambda(m)$ holds. When $n=im+s$ with $2\leq s\leq i-1$, the same as in \cite{FR96}, we are not able to completely characterize the graphs such that the equality $\sigma_i(G)=\Lambda(m)$ holds. However, the equality holds for example on the star with degree $i+s-1$ such that $i$ arms  are minimal brooms $\Br(m)$ and $s-1$ arms are paths of length $1$.

Our strategy to solve the extremal problem for Steklov eigenvalues on combinatorial finite graphs is similar to the strategy by Friedman \cite{FR96} solving the corresponding extremal problems for Laplacian eigenvalues. First by the following simple monotonicity of Steklov eigenvalues that is different with  the monotonicity in \cite{He-Hua2,YY}, we reduce the extremal problems to trees.
\begin{thm}\label{thm-mono} Let $(\wt G, \wt B,m,w)$ be a weighted finite graph and $(G,B)$ be such that $G$ is a subgraph of $\wt G$ and $B\supset \wt B$. Then
\begin{equation}\label{eq-mono}
\sigma_i(\wt G,\wt B)\geq\sigma_i(G,B)
\end{equation}
for $i=1,2,\cdots, |\wt B|$. In particular, if $\wt G$ is a finite combinatorial graph and $G$ is a spanning subgraph of $\wt G$,
Then,
\begin{equation}
\sigma_i(\wt G)\geq\sigma_i(G)
\end{equation}
for all $i\geq 0$.
\end{thm}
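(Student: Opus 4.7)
The plan is to deduce the inequality from the variational (min-max) characterization of Steklov eigenvalues, which for any weighted finite graph with boundary $(H,C,m,w)$ reads
$$\sigma_i(H,C) = \min_{\substack{V \subset \R^C \\ \dim V = i}}\ \max_{\substack{f \in V \\ f \neq 0}} \frac{D_H(\wh f)}{\sum_{x \in C} m(x) f(x)^2},$$
where $\wh f$ is the $H$-harmonic extension of $f$ from $C$ to $V(H)$ and $D_H(u) := \sum_{\{x,y\} \in E(H)} w(x,y)(u(x)-u(y))^2$. I expect this formula to be set up in Section \ref{sec-pre}. For $i > |\wt B|$ the inequality is vacuous since $\sigma_i(\wt G,\wt B) = +\infty$ by convention, so I restrict attention to $i \leq |\wt B|$.

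The first step is to build an injective linear map $\Phi : \R^{\wt B} \to \R^B$. Given $f \in \R^{\wt B}$, I let $\wh f \in \R^{V(\wt G)}$ be its $\wt G$-harmonic extension and set $\Phi(f) := \wh f|_B$. Since $\wt B \subset B \subset V(\wt G)$ and $\wh f|_{\wt B} = f$, one has $\Phi(f)|_{\wt B} = f$, so $\Phi$ is injective.

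The second step is to compare Rayleigh quotients. Fix $f \in \R^{\wt B}$, set $\tilde f := \Phi(f)$, and let $\wh{\tilde f}_G$ denote the $G$-harmonic extension of $\tilde f$. The Dirichlet principle on $G$ gives $D_G(\wh{\tilde f}_G) \leq D_G(\wh f|_{V(G)})$, since $\wh f|_{V(G)}$ is another extension of $\tilde f$ from $B$ to $V(G)$. Because $E(G) \subset E(\wt G)$ with positive weights, the same function $\wh f$ satisfies $D_G(\wh f|_{V(G)}) \leq D_{\wt G}(\wh f)$. For the denominator, $B \supset \wt B$ together with $\tilde f|_{\wt B} = f$ and $m > 0$ yields $\sum_{x \in B} m(x) \tilde f(x)^2 \geq \sum_{x \in \wt B} m(x) f(x)^2$. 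Combining these three inequalities gives
$$\frac{D_G(\wh{\tilde f}_G)}{\sum_{x \in B} m(x)\, \tilde f(x)^2} \ \leq\ \frac{D_{\wt G}(\wh f)}{\sum_{x \in \wt B} m(x)\, f(x)^2}.$$

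Finally, I would pick an $i$-dimensional subspace $V_0 \subset \R^{\wt B}$ attaining $\sigma_i(\wt G,\wt B)$ and form $W := \Phi(V_0) \subset \R^B$, which is $i$-dimensional by injectivity of $\Phi$. The displayed inequality, coupled with the min-max principle applied to $(G,B)$, delivers
$$\sigma_i(G,B)\ \leq\ \max_{\tilde f \in W\setminus\{0\}} \frac{D_G(\wh{\tilde f}_G)}{\sum_{x \in B} m(x)\, \tilde f(x)^2}\ \leq\ \sigma_i(\wt G,\wt B),$$
which is \eqref{eq-mono}. I do not foresee a serious obstacle: the argument is essentially bookkeeping around the min-max formula, with both mechanisms (shrinking $E(G)$ decreases the numerator, enlarging $B$ enlarges the denominator) working in the same direction. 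The ``in particular'' statement then follows immediately, since deleting edges from a combinatorial graph $\wt G$ only enlarges the set $\{v : \deg v \leq 1\}$, so $B(G) \supset B(\wt G)$ automatically for any spanning subgraph $G$.
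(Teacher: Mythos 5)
Your proof is correct and follows essentially the same variational route as the paper: both arguments rest on the two monotonicities that restricting to $E(G)\subset E(\wt G)$ can only decrease the Dirichlet energy while enlarging $\wt B$ to $B$ can only increase the boundary norm. The only cosmetic difference is in the bookkeeping of test functions—you transport the full $i$-dimensional optimal subspace via the injective map $\Phi$ and invoke min--max, whereas the paper selects a single combination $c_1\varphi_1+\cdots+c_i\varphi_i$ of the first $i$ eigenfunctions of $(\wt G,\wt B)$ made orthogonal on $B$ to the first $i-1$ eigenfunctions of $(G,B)$.
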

After reducing the extremal problem to trees, following the strategy by Friedman \cite{FR96}, we then study the minimal problem of the first Steklov eigenvalues with vanishing Dirichlet boundary data on trees (See Section \ref{sec-min-lambda} for details). In \cite{FR96}, Friedman showed that the minimal tree for the first Laplacian eigenvalues with vanishing Dirichlet boundary data is essentially a path. However, it is rather different for the Steklov case. The minimal tree for this case is essentially a minimal broom (See Theorem \ref{thm-lambda-1} for details). This makes the minimal graphs for Steklov eigenvalues become more complicated than those for Lapalcian eigenvalues in \cite{FR96}.

Next, by developing a theory of nodal domains for Steklov eigenvalues that is similar to the theory of nodal domains for Laplacian eigenvalues established by Friedman \cite{FR93}, we reduce the extremal problem for Steklov eigenvalues on trees to the minimum problem of the first Steklov eigenvalues with vanishing Dirichlet boundary data on its nodal domains which has been solved in the last step.

The organization of the rest of the paper is as follows. In Section \ref{sec-pre}, we introduce some preliminary notions and symbols on analysis of graphs; In Section \ref{sec-mono}, we prove the monotonicity in Theorem \ref{thm-mono} and discuss its rigidity; In Section \ref{sec-nodal}, we study nodal domains for Steklov eigenfunctions; In Section \ref{sec-min-lambda}, we obtain the minimum for the first Steklov eigenvalues with vanishing Dirichlet boundary data on trees which is an analogue of Lemma 3.1 in \cite{FR96} and plays an important role in later applications; In Section \ref{sec-clump}, we recall the notion of clump numbers of trees in \cite{FR96}, explore some of its properties and recall the results in \cite{FR96} about reducing clump numbers of trees by removing edges.  In Section \ref{sec-min-sigma}, we prove Theorem \ref{thm-main}. A crucial step in the proof of Theorem \ref{thm-main} is to relate the first positive Steklov eigenvalue of a tree to its clump number (See Theorem \ref{thm-Steklov-clump}) which is an analogue of Lemma 4.1 in \cite{FR96} for the first positive Laplacian eigenvalue.
\section{Preliminaries}\label{sec-pre}
In this section, we recall some notions and symbols on spectral analysis of graphs.

Let $(G,m,w)$ be a weighted finite graph. As in \cite[Definition 2.1]{Ba}, a skew symmetric function $\alpha:V(G)\times V(G)\to \R$ such that $\alpha(x,y)=0$ when $x\not\sim y$ is called a flow on the graph $G$. The collection of all flows on $G$ is denoted as $A^1(G)$. Equipped with the spaces $\R^{V(G)}$ and $A^1(G)$ the natural inner products
\begin{equation}
\vv<f,g>_{G}=\sum_{x\in V(G)}f(x)g(x)m_x
\end{equation}
and
\begin{equation}
\vv<\alpha,\beta>_{G}=\sum_{\{x,y\}\in E(G)}\alpha(x,y)\beta(x,y)w_{xy}
\end{equation}
respectively. When $A\subset V(G)$ and $S\subset E(G)$, we define
\begin{equation}
\vv<f,g>_A=\sum_{x\in A}f(x)g(x)m_x
\end{equation}
and
\begin{equation}
\vv<\alpha,\beta>_{S}=\sum_{\{x,y\}\in S}\alpha(x,y)\beta(x,y)w_{xy}.
\end{equation}
Let $d:\R^{V(G)}\to A^1(G)$ be the map:
\begin{equation}
df(x,y)=\left\{\begin{array}{ll}f(y)-f(x)&x\sim y\\
0&x\not\sim y.
\end{array}\right.
\end{equation}
Let $d^*:A^1(G)\to \R^{V(G)}$ be the adjoint operator of $d$ and define the Laplacian operator of $G$ as
\begin{equation}
\Delta_Gf=-d^*df.
\end{equation}
By direct computation, it is not hard to check that
\begin{equation}
\Delta_Gf(x)=\frac{1}{m_x}\sum_{y\in V(G)}(f(y)-f(x))w_{xy}
\end{equation}
for any $x\in V(G)$ and $f\in \R^{V(G)}$. By the definition of $\Delta_G$, it is clear that
\begin{equation}\label{eq-Green-1}
-\vv<\Delta_G f, g>_G=\vv<df,dg>_{G}
\end{equation}
for any $f,g\in \R^{V(G)}$. So $-\Delta_G$ is a nonnegative self-adjoint operator on $\R^{V(G)}$. Its eigenvalues are denoted as
$$0=\mu_1(G)\leq\mu_2(G)\leq\cdots\leq \mu_{|V(G)|}(G)$$
and are called the Laplacian eigenvalues of $G$.

Let $(G,B,m,w)$ be a weight finite graph with boundary. For any $f\in \R^{V(G)}$, define
\begin{equation}\label{eq-normal-derivative}
\frac{\p f}{\p n}(x)=\frac{1}{m_x}\sum_{y\in V(G)}(f(x)-f(y))w_{xy}=-\Delta_Gf(x)
\end{equation}
for any $x\in B$.  The operator is regarded as the discrete version of taking normal derivative on the boundary of a Riemannian manifold. By \eqref{eq-Green-1} and \eqref{eq-normal-derivative}, we have the following Green's identity:
\begin{equation}\label{eq-Green-2}
\vv<df,dg>_{G}=\vv<\frac{\p f}{\p n},g>_{B}-\vv<\Delta_G f,g>_\Omega.
\end{equation}
This may be the reason that one defines $\frac{\p f}{\p n}$ as $-\Delta_G f\big|_{B}$.

A real number $\sigma$ is called a Steklov eigenvalue of $(G,B,m,w)$ if the following boundary value problem:
\begin{equation}
\left\{\begin{array}{ll}\Delta_Gf(x)=0& x\in \Omega\\
\frac{\p f}{\p n}(x)=\sigma f(x)& x\in B
\end{array}\right.
\end{equation}
has nonzero solutions. A nonzero solution $f$ of the boundary value problem is called a Steklov eigenfunction for the eigenvalue $\sigma$. It is not hard the see that the Steklov eigenvalues are the eigenvalues of the so called Steklov operator or DtN map:
$$\Lambda:\R^{B}\to \R^B, \Lambda(u)=\frac{\p \wh u}{\p n }$$
where $\wh u$ means the harmonic extension of $u$. That is, $\wh u$ satisifies the following Dirichlet boundary value problem:
\begin{equation}
\left\{\begin{array}{ll}\Delta_G\wh u(x)=0&x\in \Omega\\
\wh u(x)=u(x)&x\in B.
\end{array}\right.
\end{equation}
By \eqref{eq-Green-2}, we know that
\begin{equation}
\vv<\Lambda(u),v>_{B}=\vv<d\wh u,d\wh v>_{G}
\end{equation}
for any $u,v\in \R^B$. So, $\Lambda$ is a nonnegative self-adjoint operator on $\R^B$, and its eigenvalues are denoted as
$$0=\sigma_1(G,B,m,w)\leq \sigma_2(G,B,m,w)\leq \cdots\leq \sigma_{|B|}(G,B,m,w).$$
For simplicity, we also denote $\sigma_i(G,B,m,w)$ as $\sigma_i,\sigma_i(G),\sigma_i(G,B)$ when the ignored information is clear in context.

To extend Friedman's theory of nodal domains for Laplacian eigenfunction to Steklov eigenfunctions, we need to consider Steklov eigenvalues with vanishing Dirichlet boundary data as in \cite{He-Hua2,YY}. So, we introduce the notion of graph with boundary and Dirichlet boundary for convenience.
\begin{defn}
A triple $(G,B, B_D)$ is called a graph with boundary $B$ and Dirichlet boundary $B_D$ if $G$ is a simple graph, $B, B_D\subset V(G)$ and $B\cap B_D=\emptyset$. $\Omega_D:=V(G)\setminus B_D$ is called the Dirichlet interior of $G$ and $\Omega=\Omega_D\setminus B$ is called the interior of $G$. An edge in $E(B_D,
\Omega_D)$ is called a Dirichlet boundary edge. We will also denote $B,B_D,\Omega$ and $\Omega_D$ as $B(G),B_D(G),\Omega(G)$ and $\Omega_D(G)$ respectively if necessary.
\end{defn}
Let $(G,B,B_D,m,w)$ be a weighted finite graph with boundary $B$ and Dirichlet boundary $B_D$. A real number $\lambda$ is called a Steklov eigenvalue of $(G,B,B_D,m,w)$ with vanishing Dirichlet boundary data on $B_D$ if the following boundary value problem:
\begin{equation}
\left\{\begin{array}{ll}\Delta_Gf(x)=0& x\in \Omega\\
\frac{\p f}{\p n}(x)=\lambda f(x)& x\in B\\
f(x)=0&x\in B_D
\end{array}\right.
\end{equation}
has nonzero solutions. A nonzero solution $f$ of the boundary value problem is called a Steklov eigenfunction with vanishing Dirichlet boundary data on $B_D$ for the eigenvalue $\lambda$. The Steklov eigenvalues with vanishing Dirichlet boundary data on $B_D$ are the eigenvalues of the following Steklov operator with vanishing Dirichlet boundary data on $B_D$:
$$\Lambda_0:\R^B\to\R^B,\ \Lambda_0(u)=\frac{\p \zeta_u}{\p n}$$
where $\zeta_u$ satisfies
\begin{equation}
\left\{\begin{array}{ll}\Delta_G\zeta_u(x)=0&x\in \Omega\\
\zeta_u(x)=u(x)&x\in B\\
\zeta_u(x)=0&x\in B_D.
\end{array}\right.
\end{equation}
By \eqref{eq-Green-2},
\begin{equation}
\vv<\Lambda_0(u),v>_{B}=\vv<d\zeta_u,d\zeta_v>_G
\end{equation}
for any $u,v\in \R^B$. So, $\Lambda_0:\R^B\to\R^B$ is a nonnegative self-adjoint operator on $\R^B$, and its eigenvalues are denoted as:
$$0\leq \lambda_1(G,B,B_D,m,w)\leq \lambda_2(G,B,B_D,m,w)\leq\cdots\leq\lambda_{|B|}(G,B,B_D,m,w).$$
For simplicity, we also denote $\lambda_i(G,B,B_D,m,w)$ as $\lambda_i,\lambda_i(G),\lambda_i(G,B,B_D)$ when the ignored information is clear in context. By the Dirichlet principle, it is clear that
\begin{equation*}
\lambda_1(G,B,B_D)=\min\left\{\frac{\vv<df,df>_{G}}{\vv<f,f>_B}\ \bigg|\ f\in \R^{V(G)}\ \mbox{with }f|_{B}\not\equiv 0\ \mbox{and }f|_{B_D}=0.\right\}.
\end{equation*}
Thus, if $G$ is connected and $B_D$ is nonempty, then $\lambda_1(G,B,B_D)>0$. It is also clear that the measures of the vertices in $B_D$ do not affect the map $\Lambda_0$ and the eigenvalues $\lambda_i(G,B,B_D)$.

Next, we introduce the notion of a broom.
\begin{defn}\label{def-broom}
Let $P$ be the path of length $i$. The tree formed by adding a Dirichlet boundary edge to one of the end points of $P$ with length $l$ (i.e. with weight $\frac1l$) and adding $d$ boundary vertices to the other end point of $P$ is denoted as $\Br(l,i,d)$ which is called a broom with parameters $(l,i,d)$. When $d=0$, the other end point of $P$ is viewed as the boundary vertex. The Dirichlet boundary vertex is viewed as the root of the broom. All the vertices and edges of $\Br(l,i,d)$ except the Dirichlet boundary edge are considered to be of unit weight.
\end{defn}
Note that $\Br(l,i,1)=\Br(l,i+1,0)$ by definition. The Steklov eigenvalues with vanishing Dirichlet boundary data of a broom can be computed directly.
\begin{lem}\label{lem-lambda1-broom}
For any real number $l>0$ and nonnegative integers $i$ and $d$,
\begin{equation*}
\lambda_1(\Br(l,i,d))=\left\{\begin{array}{ll}\frac{1}{l+i}&d=0\\
\frac{1}{1+(l+i)d}&d\geq 1.
\end{array}\right.
\end{equation*}
Moreover, let $v_0\sim v_1\sim\cdots\sim v_i$ be the path $P$ and $o\sim v_0$ in $\Br(l,i,d)$, and $u_1,u_2,\cdots,u_d$ be the boundary vertices of $\Br(l,i,d)$ when $d>0$, where $o$ is the root of $\Br(l,i,d)$. Then, the eigenfunction $f$ for $\lambda_1$ is given by
\begin{equation}
f(x)=\left\{\begin{array}{ll}0&x=o\\
\frac{l+j}{l+i}&x=v_j \mbox{ with }j=0,1,\cdots,i
\end{array}\right.
\end{equation}
when $d=0$, and
\begin{equation}
f(x)=\left\{\begin{array}{ll}0&x=o\\
\frac{(l+j)d}{1+(l+i)d}&x=v_j \mbox{ with }j=0,1,\cdots,i\\
1&x=u_j\mbox{ with }j=1,2,\cdots,d\\
\end{array}\right.
\end{equation}
when $d>0$. Furthermore, when $d\geq 2$,
$$\lambda_2(\Br(l,i,d))=\cdots=\lambda_d(\Br(l,i,d))=1$$
with the eigenspace generated by
\begin{equation*}
f_j(x)=\left\{\begin{array}{ll}1&x=u_1\\
-1&x=u_j\\
0&\mbox{otherwise}
\end{array}\right.
\end{equation*}
for $j=2,3,\cdots, d$.
\end{lem}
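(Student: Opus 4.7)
The plan is to verify the claimed eigenfunction by direct substitution into the Steklov boundary value problem, and then establish that the claimed $\lambda$ is the \emph{first} eigenvalue (not just some eigenvalue) via a symmetry decomposition of $\R^B$. The key structural observation is that any function $f$ on $\Br(l,i,d)$ with $f(o)=0$ and $\Delta f(v_j)=0$ for $j=0,1,\dots,i-1$ is forced to be linear in the weighted distance from $o$: harmonicity at $v_0$ (with the edge $ov_0$ of weight $1/l$) gives $f(v_1)=\frac{l+1}{l}f(v_0)$, and harmonicity at $v_1,\dots,v_{i-1}$ then propagates this by induction to $f(v_j)=s(l+j)$ for a single unknown slope $s\in\R$. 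So the weighted edge of length $l$ behaves exactly like $l$ unit-weight edges prepended to the path, and the problem reduces to determining $s$ and (when $d\geq1$) the boundary values $f(u_k)$ from harmonicity at $v_i$ together with the Steklov condition.

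For $d=0$ the unique boundary vertex is $v_i$, so $|B|=1$ and $\Lambda_0$ has a single eigenvalue. Normalising $f(v_i)=1$ pins down $s=1/(l+i)$, and the Steklov condition $\frac{\p f}{\p n}(v_i)=f(v_i)-f(v_{i-1})=s=\lambda f(v_i)$ yields $\lambda=1/(l+i)$, matching the stated formula for both $\lambda_1$ and $f$. For $d\geq1$, I would seek the ground state among eigenfunctions that are constant on $\{u_1,\dots,u_d\}$, say $f(u_k)=1$. Harmonicity at $v_i$, which reads $f(v_{i-1})-(d+1)f(v_i)+d=0$, combined with $f(v_j)=s(l+j)$ gives a single linear equation for $s$. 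Solving yields $s=d/(1+d(l+i))$ and hence $f(v_j)=d(l+j)/(1+d(l+i))$, and the Steklov condition $\frac{\p f}{\p n}(u_k)=1-f(v_i)=1/(1+d(l+i))=\lambda f(u_k)$ then identifies $\lambda=1/(1+d(l+i))$, matching the claim.

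The main obstacle is showing this is actually the \emph{smallest} Steklov eigenvalue, not just an eigenvalue. Here I would exploit the $S_d$-symmetry of $\Br(l,i,d)$ permuting $u_1,\dots,u_d$: since $\Lambda_0$ commutes with this action, $\R^B$ splits orthogonally into the symmetric subspace $\R\cdot\mathbf{1}$ and its $(d-1)$-dimensional complement, and each is $\Lambda_0$-invariant. The symmetric subspace is exactly the ansatz handled above and gives eigenvalue $1/(1+d(l+i))$. On the orthogonal complement, boundary data satisfy $\sum_k f(u_k)=0$, so harmonicity at $v_i$ reduces to $f(v_{i-1})=(d+1)f(v_i)$; combined with $f(v_j)=s(l+j)$ this forces $s(l+i-1)=(d+1)s(l+i)$, which has no nonzero solution for $d,l\geq0$ with $l>0$, so $s=0$ and $f\equiv0$ on the path. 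The Steklov equation then collapses to $f(u_k)-0=\lambda f(u_k)$, giving $\lambda=1$ with multiplicity $d-1$. Since $1>1/(1+d(l+i))$, this pinpoints $\lambda_1$ as claimed and simultaneously exhausts all $d$ eigenvalues of $\Lambda_0$.
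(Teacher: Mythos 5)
Your proposal is correct and complete. The paper itself gives no argument here beyond ``direct verification,'' so what you have written is exactly the computation the authors leave to the reader, plus one genuinely additional ingredient: the $S_d$-symmetry decomposition of $\R^B$ into $\R\cdot\mathbf{1}$ and its orthogonal complement, which not only certifies that $1/(1+(l+i)d)$ is the \emph{smallest} eigenvalue but in fact exhibits the entire spectrum of $\Lambda_0$ as $\{1/(1+(l+i)d)\}\cup\{1\ \mbox{with multiplicity}\ d-1\}$. That is more information than the lemma asks for, and it is a clean, self-contained way to settle minimality. A cheaper alternative, more in the spirit of how the paper is organized, is to observe that your candidate eigenfunction is strictly positive on $\Omega_D$ and invoke Theorem \ref{thm-first-eigenfunction} and Corollary \ref{cor-first-eigenfunction} (whose proofs do not depend on this lemma): the first eigenvalue is the unique one admitting a sign-definite eigenfunction, so a positive eigenfunction must belong to $\lambda_1$. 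One small point worth a sentence in a final write-up: when $i=0$ the relation ``harmonicity at $v_i$ reads $f(v_{i-1})-(d+1)f(v_i)+d=0$'' has no literal $v_{i-1}$, but the Dirichlet edge contributes $-f(v_0)/l=-s$, which formally plays the role of $f(v_{-1})-f(v_0)$ under your parametrization $f(v_j)=s(l+j)$, so the equation for $s$ is unchanged; your argument goes through verbatim.
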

\begin{proof}The conclusion can be shown by direct verification.
\end{proof}
Next, we introduce the notion of minimal brooms. It plays an important role in Section \ref{sec-min-lambda}.
\begin{defn}\label{def-min-broom}
For any nonnegative integer $n$ and  positive real number $l$, we denote $\Br(l,n)$ the brooms that have the minimal $\lambda_1$ among all brooms $\Br(l,i,d)$ with $i+d=n$. We call $\Br(l,n)$ the minimal brooms with parameters $(l,n)$ and denote $\lambda_1(\Br(l,n))$ as $\Lambda(l,n)$.
\end{defn}
By Lemma \ref{lem-lambda1-broom}, we have the following conclusion.
\begin{lem}\label{lem-broom-l-n}
The function $\Lambda(l,n)$ is strictly decreasing on $l$ and $n$. More precisely, when $n=0,1$, $\Br(l,n)=\Br(l,n,0)$, and when $n\geq 2,$
\begin{equation*}
\begin{split}
\Br(l,n)=\left\{\begin{array}{ll}\Br(l,0,n)&l\geq n\\
\Br\left(l,\lfloor\frac{n-l}{2}\rfloor,n-\lfloor\frac{n-l}{2}\rfloor\right)&l<n\mbox{ and }\{\frac{n-l}{2}\}<\frac12\\
\Br\left(l,\lceil\frac{n-l}{2}\rceil,n-\lceil\frac{n-l}{2}\rceil\right)&l<n\mbox{ and }\{\frac{n-l}{2}\}>\frac12\\
\Br\left(l,\lfloor\frac{n-l}{2}\rfloor,n-\lfloor\frac{n-l}{2}\rfloor\right)\mbox{ or }&\\
\Br\left(l,\lceil\frac{n-l}{2}\rceil,n-\lceil\frac{n-l}{2}\rceil\right)&l<n\mbox{ and }\{\frac{n-l}{2}\}=\frac12.
\end{array}\right.
\end{split}
\end{equation*}
Moreover
\begin{equation*}
\Lambda(l,n)=\left\{\begin{array}{ll}\frac{1}{l+n}&n=0,1\\
\frac{1}{1+nl}&l\geq n\geq 2\\
\frac{1}{1+(l+\lfloor\frac{n-l}{2}\rfloor)(n-\lfloor\frac{n-l}{2}\rfloor)}&n\geq 2,l<n \mbox{ and } \{\frac{n-l}{2}\}\leq \frac12\\
\frac{1}{1+(l+\lceil\frac{n-l}{2}\rceil)(n-\lceil\frac{n-l}{2}\rceil)}&n\geq 2,l<n\mbox{ and } \{\frac{n-l}{2}\}> \frac12.
\end{array}\right.
\end{equation*}
Here $\{x\}:=x-\lfloor x\rfloor$ for $x\in \R$.
\end{lem}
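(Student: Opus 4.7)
The plan is to reduce this to a one-variable discrete optimization, solve it by concavity, and then read off monotonicity from the resulting formula.

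First I would apply Lemma \ref{lem-lambda1-broom} to rewrite the minimization. Minimizing $\lambda_1(\Br(l,i,d))$ over pairs $(i,d)$ with $i+d=n$ is the same as maximizing the denominator, which equals $l+i$ when $d=0$ and equals $1+(l+i)d$ when $d\geq 1$. Since $\Br(l,n-1,1)=\Br(l,n,0)$ and the two formulas agree there (both give denominator $l+n$), I can absorb the $d=0$ case and just maximize $h(i):=1+(l+i)(n-i)$ over $i\in\{0,1,\ldots,n-1\}$ (identifying $(i,d)=(n-1,1)$ with $(n,0)$). For $n=0,1$ there is essentially a single broom, giving the stated values $\frac{1}{l}$ and $\frac{1}{l+1}$.

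For $n\geq 2$, the quadratic $g(i):=(l+i)(n-i)$ is concave with continuous maximum at $i^{*}=\frac{n-l}{2}$. If $l\geq n$ then $i^{*}\leq 0$, so the concave maximum over $\{0,\ldots,n-1\}$ occurs at $i=0$, yielding $\Br(l,n)=\Br(l,0,n)$ and $\Lambda(l,n)=\frac{1}{1+nl}$. If $l<n$ then $0<i^{*}<n-1$, so both $\lfloor i^{*}\rfloor$ and $\lceil i^{*}\rceil$ lie in the admissible range; by concavity and the symmetry of $g$ about $i^{*}$, the integer closer to $i^{*}$ is the unique maximizer, with both maximizing when $i^{*}$ is a half-integer. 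Translating ``closer'' into the fractional part $\{\frac{n-l}{2}\}$ gives exactly the three subcases of the statement, and substituting back produces the stated formulas for $\Lambda(l,n)$.

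For strict monotonicity in $l$, each function $l\mapsto\lambda_1(\Br(l,i,d))$ is strictly decreasing, and if $(i_0,d_0)$ achieves the minimum at $l_1$, then at any $l_2>l_1$ one has $\Lambda(l_2,n)\leq \lambda_1(\Br(l_2,i_0,d_0))<\lambda_1(\Br(l_1,i_0,d_0))=\Lambda(l_1,n)$. For strict monotonicity in $n$, if $(i_0,d_0)$ realizes $\Lambda(l,n)$, then replacing it with $(i_0+1,d_0)$ (valid since $i_0+1+d_0=n+1$) yields denominator $1+(l+i_0+1)d_0>1+(l+i_0)d_0$ when $d_0\geq 1$, or $l+i_0+1>l+i_0$ when $d_0=0$, in either case giving $\Lambda(l,n+1)<\Lambda(l,n)$.

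I do not expect a genuine obstacle: the only thing to watch carefully is the identification $\Br(l,i,1)=\Br(l,i+1,0)$, which must be used consistently to avoid artificially splitting the $d=0$ case from the rest and to see that the formula $\Lambda(l,n)=\frac{1}{l+n}$ for $n=1$ matches both $\Br(l,0,1)$ and $\Br(l,1,0)$. Once that bookkeeping is set up, every remaining step is an elementary consequence of the concavity of $g$ and the explicit formulas of Lemma \ref{lem-lambda1-broom}.
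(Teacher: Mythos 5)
Your proposal is correct and follows essentially the same route as the paper, which derives this lemma directly from the explicit formula in Lemma \ref{lem-lambda1-broom} by maximizing the denominator $1+(l+i)(n-i)$ over $i+d=n$ (the paper states no further detail, treating the concave quadratic optimization and the identification $\Br(l,i,1)=\Br(l,i+1,0)$ as routine). Your bookkeeping of the boundary cases and the two monotonicity arguments are all sound.
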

We now introduce the notion of minimal brooms with total length $l$.
\begin{defn}\label{def-min-broom-l}
For a real number $l>0$, define $$\Br(l)=\left\{\begin{array}{ll}\Br(1,l-1)& \mbox{$l$ is an integer}\\
\Br(\{l\},\lfloor l\rfloor)&\mbox{Otherwise}
\end{array}\right.$$ and $$\Lambda(l)=\lambda_1(\Br(l)).$$
\end{defn}
By Lemma \ref{lem-broom-l-n}, we have the following conclusions for minimal brooms with total length $l$.
\begin{lem}\label{lem-broom-l}
The function $\Lambda(l)$ is strictly decreasing. More precisely,
\begin{equation*}
\Br(l)=\left\{\begin{array}{ll}\Br(l-\lceil l-1\rceil,\lceil l-1\rceil,0)&l\leq 2;\\
\Br(1,m-1,m)&l>2\mbox{ and $l=2m$ with }m\in \mathbb N \\
\Br(1,m-1,m+1)\mbox{ or}&\\
\Br(1,m,m)&l>2\mbox{ and $l=2m+1$ with }m\in \mathbb N\\
\Br(\alpha,m,m)&l=2m+\alpha\mbox{ with }m\in \mathbb N,\alpha\in (0,1)\\
\Br(\alpha,m,m+1)&l=2m+1+\alpha\mbox{ with }m\in \mathbb N,\alpha\in (0,1)\\
\end{array}
\right.
\end{equation*}
and
\begin{equation*}
\Lambda(l)=\left\{\begin{array}{ll}\frac1l&l\leq 2\\
\frac1{1+m^2}&l>2\mbox{ and $l=2m$ with }m\in \mathbb N \\
\frac{1}{1+m(m+1)}&l>2\mbox{ and $l=2m+1$ with }m\in \mathbb N\\
\frac{1}{1+m(m+\alpha)}&l=2m+\alpha\mbox{ with }m\in \mathbb N,\alpha\in (0,1)\\
\frac{1}{1+(m+\alpha)(m+1)}&l=2m+1+\alpha\mbox{ with }m\in \mathbb N,\alpha\in (0,1).\\
\end{array}
\right.
\end{equation*}
In particular, when $l\geq 1$,
\begin{equation*}
\Lambda(l)=\frac{1}{1+\lfloor\frac{l+1}2\rfloor(l-\lfloor\frac{l+1}2\rfloor)},
\end{equation*}
and when $l\in \mathbb N$,
$$\Lambda(l)=\frac{1}{1+\lfloor\frac{l^2}{4}\rfloor}.$$
\end{lem}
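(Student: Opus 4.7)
The proof is essentially a case-by-case bookkeeping exercise: once we unwind Definition \ref{def-min-broom-l}, every assertion in Lemma \ref{lem-broom-l} becomes a direct substitution into Lemma \ref{lem-broom-l-n} (which already handled the real work of choosing between $\Br(l,i,d)$ and $\Br(l,i',d')$ within a fixed total count $i+d=n$). So the plan is to separate the two regimes in Definition \ref{def-min-broom-l}, apply Lemma \ref{lem-broom-l-n} to each, and then collect the outputs into the closed-form expressions in the statement.

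First I would dispose of the range $l\leq 2$. When $l\in(0,1]$ we have $\Br(l)=\Br(\{l\},0)=\Br(\{l\},0,0)$ (or $\Br(1,0,0)$ if $l=1$), and when $l\in(1,2]$ we have $\Br(l)=\Br(\{l\},1)=\Br(\{l\},1,0)$ (or $\Br(1,1,0)$ if $l=2$); in each case Lemma \ref{lem-broom-l-n} gives $\Br(l,0)=\Br(l,0,0)$ and $\Br(l,1)=\Br(l,1,0)$ directly, and the $d=0$ line of Lemma \ref{lem-lambda1-broom} yields $\Lambda(l)=1/l$. This matches the first line of both displays and shows the unified form $\Br(l-\lceil l-1\rceil,\lceil l-1\rceil,0)$.

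Next, for $l>2$ I split on whether $l$ is an integer. If $l=k\in\mathbb{N}$, then $\Br(l)=\Br(1,k-1)$ with $k-1\geq 2$, and since $1<k-1$ we are in the ``$l<n$'' branch of Lemma \ref{lem-broom-l-n} with fractional part $\{(k-2)/2\}$ equal to $0$ when $k$ is even and $1/2$ when $k$ is odd; writing $k=2m$ or $k=2m+1$ and carrying out the floors/ceilings produces $\Br(1,m-1,m)$ in the even case and the pair $\Br(1,m-1,m+1),\Br(1,m,m)$ in the odd case. If $l$ is not an integer, write $l=k+\alpha$ with $\alpha\in(0,1)$ and $k=\lfloor l\rfloor\geq 2$; since $\alpha<1\leq k$, we again invoke the ``$l<n$'' branch, compute $\{(k-\alpha)/2\}=(1-\alpha)/2<1/2$ when $k=2m+1$ is odd and $\{(k-\alpha)/2\}=1-\alpha/2>1/2$ when $k=2m$ is even, and read off $\Br(\alpha,m,m+1)$ and $\Br(\alpha,m,m)$ respectively. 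The values of $\Lambda(l)$ in each case come from plugging $(l,i,d)$ into the $d\geq 1$ formula $1/(1+(l+i)d)$ from Lemma \ref{lem-lambda1-broom}.

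Finally, the ``In particular'' formulas follow from a short algebraic identification: for $l\geq 1$ write $l=2m+\alpha$ with $\alpha\in[0,2)$ and observe $\lfloor(l+1)/2\rfloor$ equals $m$ when $\alpha\in[0,1)$ and $m+1$ when $\alpha\in[1,2)$, which then forces $l-\lfloor(l+1)/2\rfloor$ to equal $m+\alpha$ or $m+(\alpha-1)$ correspondingly, matching the four-case display above; the integer formula $\Lambda(l)=1/(1+\lfloor l^2/4\rfloor)$ is then just the specialization $\alpha\in\{0,1\}$. Strict monotonicity of $\Lambda(l)$ is inherited from the strict monotonicity of $\Lambda(l,n)$ in both arguments asserted in Lemma \ref{lem-broom-l-n}, combined with the elementary observation that as $l$ increases through an integer, $\Br(l)$ trades one boundary vertex for an additional unit of path length, which strictly decreases $\lambda_1$ by that same monotonicity. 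The only mildly delicate point is reconciling the small-$l$ description $\Br(l-\lceil l-1\rceil,\lceil l-1\rceil,0)$ with the generic $\Br(\alpha,m,m+1)$ at the boundary values $l=1,2$ and $l\in(1,2)$, which I would handle by using the identity $\Br(l,i,1)=\Br(l,i+1,0)$ noted after Definition \ref{def-broom}; this is really the only place where case-checking threatens to get tangled.
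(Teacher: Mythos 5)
Your proposal is correct and follows exactly the route the paper intends: the paper offers no written proof beyond ``By Lemma \ref{lem-broom-l-n}\ldots'', and your case analysis (splitting on $l\leq 2$ versus $l>2$, integer versus non-integer, and tracking the fractional part of $(n-l)/2$) is precisely the computation being elided, with all floors, ceilings, and resulting formulas checking out. The only loose spot is the monotonicity claim at integer values of $l$, where ``trading a boundary vertex for path length'' does not follow in one step from the monotonicity of $\Lambda(l,n)$ (since the root-edge length also drops from $1$ to $\alpha$); but strict decrease there is immediate from the explicit formulas you derive, e.g.\ $\Lambda(2m+\alpha)=\frac{1}{1+m(m+\alpha)}$ interpolates continuously and strictly decreasingly between $\Lambda(2m)$ and $\Lambda(2m+1)$.
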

Heuristically, the wedge-sum of two brooms on their roots is called a dumbbell. For the definition of wedge-sums of graphs, see \cite{He-Hua1,YY}.
\begin{defn}\label{def-dumbbell}
Let $P$ be a path of length $i$. The combinatorial tree formed by adding $d_0$ edges and $d_1$ edges adjacent to the two end vertices of $P$ respectively is called a dumbbell with parameters $(d_0,i,d_1)$ and is denoted as $\DB(d_0,i,d_1)$.
\end{defn}
The Steklov spectrum of the a dumbbell can be computed directly.
\begin{lem}\label{lem-dumbbell}
Let $d_0,i,d_1$ be positive integers. Then, the Steklov eigenvalues of $\DB(d_0,i,d_1)$ are given by
$$\sigma_2=\frac{d_0+d_1}{d_0+d_1+id_0d_1},\ \sigma_3=\sigma_4=\cdots=\sigma_{d_0+d_1}=1.$$
Moreover, let $x_0\sim x_1\sim \cdots\sim x_i$ be the path $P$ in $\DB(d_0,i,d_1)$, $u_1,u_2,\cdots,u_{d_0}$ be the boundary vertices joining to $x_0$ and $v_1,v_2,\cdots,v_{d_1}$ be the boundary vertices joining to $x_i$. Then, the eigenfunction of $\sigma_2$ is given by
\begin{equation*}
f(x)=\left\{\begin{array}{ll}-d_1&x=u_j,j=1,2,\cdots d_0\\
d_0&x=v_j,j=1,2,\cdots,d_1\\
\frac{d_0d_1(jd_0-(i-j)d_1)}{d_0+d_1+d_0d_1i}& x=x_j,j=0,1,2\cdots,i,
\end{array}\right.
\end{equation*}
and the eigenspace for the eigenvalue $1$ is generated by the following eigenfunctions:
\begin{equation*}
g_j(x)=\left\{\begin{array}{ll}1&x=u_{d_0}\\
-1&x=u_j\\
0& \mbox{otherwise}
\end{array}\right.
\end{equation*}
for $j=1,2,\cdots,d_0-1$, and
\begin{equation*}
h_k(x)=\left\{\begin{array}{ll}1&x=v_{d_1}\\
-1&x=v_k\\
0& \mbox{otherwise}
\end{array}\right.
\end{equation*}
for $k=1,2,\cdots,d_1-1$.
\end{lem}
\begin{proof}
The conclusion can be obtained by direct verification.
\end{proof}
Next we introduce the notions of stars and combs which extend the corresponding notions in \cite{FR96}.
\begin{defn}\label{def-star}
 Let $T_1,T_2,\cdots,T_r$ be $r$ rooted trees with $o_1,o_2,\cdots,o_r$ their roots respectively. Then, the wedge-sum of $T_1,T_2,\cdots,T_r$ on their roots is called a star of degree $i$ with arms $T_1,T_2,\cdots,T_r$, and is denoted as $\St(T_1,T_2,\cdots,T_r)$.  The identified vertex of the roots is called the center of the star.  When $T_i$ is a path of length $l_i$ with the root $o_i$ one of its end vertices for $i=1,2,\cdots,r$, $\St(T_1,T_2,\cdots,T_r)$ is simply denoted as $\St(l_1,l_2,\cdots,l_r)$. If $l_1=l_2=\cdots=l_r=l$, the star $\St(l_1,l_2,\cdots,l_r)$ is simply denoted as $\St(r;l)$.
\end{defn}
The Steklov spectrum of a star of degree $i$ with each arms a minimal broom $\Br(m)$ can be computed directly.
\begin{lem}\label{lem-star-broom}
For any integers $i\geq 2$ and $m\geq 1$, let $T_j$ be a minimal broom $\Br(m)$ for $j=1,2,\cdots, i$ and the combinatorial tree $G=\St(T_1,T_2,\cdots,T_i)$ . Then
$$\sigma_2(G)=\sigma_3(G)=\cdots=\sigma_i(G)=\Lambda(m)$$
and the other positive Steklov eigenvalues of $G$ are $1$. Moreover, the eigenspace for the eigenvalue $\Lambda(m)$ is generated by the eigenfunctions
\begin{equation}
g_j(x)=\left\{\begin{array}{ll}f(x)&x\in V(T_i)\\
f_j(x)&x\in V(T_j)\\
0&\mbox{otherwise}
\end{array}\right.
\end{equation}
for $j=1,2\cdots i-1$, where $f$ is a fixed first Steklov eigenfunction of $T_i$ with vanishing Dirichlet boundary data and $f_j$ is the first Steklov eigenfunction of $T_j$ with vanishing Dirichlet boundary data such that
$$\vv<f,1>_{B(T_i)}+\vv<f_j,1>_{B(T_j)}=0$$
for $j=1,2,\cdots,i-1$. Furthermore, let $B(T_j)=\{u_{j1},u_{j2},\cdots,u_{jd_j}\}$ for $j=1,2,\cdots, i$. Then, the eigenspace of the eigenvalue $1$ is generated by the eigenfunctions
\begin{equation*}
h_{jk}(x)=\left\{\begin{array}{ll}1&x=u_{jd_j}\\
-1&x=u_{jk}\\
0&\mbox{otherwise}
\end{array}\right.
\end{equation*}
for $j=1,2,\cdots,i$ and $k=1,2,\cdots,d_j-1$.
\end{lem}
\begin{proof}
The conclusion can be obtained by direct verification using Lemma \ref{lem-lambda1-broom}.
\end{proof}
\begin{defn}\label{def-comb}
Let $G$ be a connected subgraph of the connected graph $\wt G$. For each $x\in V(G)$, we denote by $\wt G_x$ the connected component of $\wt G$ with all the edges in $G$ deleted that contains $x$.
\begin{enumerate}
\item If for any $x, y\in V(G)$ with $x\neq y$, $V(\wt G_x)\cap V(\wt G_y)=\emptyset$, we call $\wt G$ a comb over $G$. The subgraph $G$ is called the base of the comb and $\wt G_x$ is called the tooth of the comb at the base vertex $x\in V(G)$;
\item Let $\wt G$ be a  comb over $G$ and $T$ be a rooted tree with $o$ as its root. If for each $x\in V(G)$, the tooth $(\wt G_x,x)$ is isomorphic to $(T,o)$ as rooted trees, then $\wt G$ is called a regular comb over $G$ with tooth $T$ and is denoted as $\Comb(G;T)$.
\end{enumerate}
\end{defn}
Similar to Lemma 5.3 in \cite{YY}, the Steklov eigenvalues and eigenfunctions of a regular comb can be computed directly as follows.
\begin{lem}\label{lem-comb}
Let the combinatorial graph $\wt G=\Comb(G;T)$ be a regular comb over the nontrivial connected graph $G$ with tooth the nontrivial rooted tree $(T,o)$. Let $$0=\mu_1<\mu_2\leq\cdots\leq\mu_{|V(G)|}$$ be the Laplacian eigenvalues of the connected graph $G$, and $T_1=T$ and $T_i$ be the tree formed by adding a Dirichlet boundary edge $\{o',o\}$ of weight $\mu_i$ to $T$ adjacent to $o$ for $i=2,3,\cdots,|V(G)|$.  Let $f$ be an eigenfunction of $(T_1,L(T_1)\setminus \{o\})$ when $i=1$ or $(T_i,L(T_i)\setminus\{o'\},\{o'\})$ when $i\geq 2$, and $g$ be an eigenfunction of $\mu_i(G)$. Moreover, let
\begin{equation}
h(v)=g(x)f(\varphi_x(v))
\end{equation}
when $v\in \wt G_x$ with $x\in V(G)$. Here $\varphi_x:(\wt G_x,x)\to (T,o)$ is the isomorphism of the rooted trees. Then, $h$ is a Steklov eigenfunction for $\wt G$ with the same eigenvalue as $f$.
\end{lem}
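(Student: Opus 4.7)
\smallskip

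\noindent\emph{Proof proposal.}\hskip .2em
The plan is direct verification, exploiting the structural decomposition of $\wt G$ as the base $G$ with disjoint teeth isomorphic to $T$. I would check, case by case over the position of a vertex $v\in V(\wt G)$, that $h$ is $\wt G$-harmonic at every interior vertex of $\wt G$ and that it satisfies the Steklov boundary condition at every leaf of $\wt G$.

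Every vertex $v\in V(\wt G)$ lies in a unique tooth $\wt G_x$, so $h(v)=g(x)f(\varphi_x(v))$ is unambiguous. Its $\wt G$-neighbors coincide with the $T$-neighbors of $\varphi_x(v)$ (transported by $\varphi_x^{-1}$), except that when $v=x$ the $G$-neighbors of $x$ appear as well. For a non-root tooth vertex $v\ne x$, the $\wt G$-star at $v$ matches the $T$-star at $\varphi_x(v)$, and also the $T_i$-star (the only edge added in passing from $T$ to $T_i$ is incident to $o$), so $\Delta_{\wt G}h(v)=g(x)\Delta_{T_i}f(\varphi_x(v))$. If $\varphi_x(v)$ is interior for the Steklov problem on $T_i$, harmonicity is immediate; if $\varphi_x(v)\in L(T)\setminus\{o\}$, then $v$ is a leaf of $\wt G$ and the eigenfunction identity for $f$ gives $\frac{\partial h}{\partial n}(v)=-\Delta_{\wt G}h(v)=\sigma h(v)$ on the spot.

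The decisive case is the base vertex $v=x$, which I would handle by splitting the sum in $\Delta_{\wt G}h(x)$ into contributions from $G$-neighbors (which all contribute $f(o)$ through $h(y)=g(y)f(o)$) and from tooth-neighbors. A short computation produces
\[
\Delta_{\wt G}h(x)=f(o)\,\Delta_G g(x)+g(x)\,\Delta_T f(o)=-\mu_i f(o)g(x)+g(x)\,\Delta_T f(o),
\]
so harmonicity at $x$ reduces to the single identity $\Delta_T f(o)=\mu_i f(o)$. For $i=1$ both sides vanish: $\mu_1=0$ and $o$ is interior in the Steklov problem on $T_1$. For $i\ge 2$ the key observation is that $o\in\Omega(T_i)$, so $\Delta_{T_i}f(o)=0$; expanding this as $\Delta_T f(o)+\mu_i(f(o')-f(o))=0$ and substituting the Dirichlet condition $f(o')=0$ yields exactly $\Delta_T f(o)=\mu_i f(o)$. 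This cancellation is the essence of the lemma, and the weight $\mu_i$ on the auxiliary edge $\{o',o\}$ in the definition of $T_i$ is engineered precisely for it. No Steklov boundary condition need be checked at $x$ itself, since $\deg_{\wt G}(x)\ge 2$ (both $G$ and $T$ are nontrivial) and hence $x\in\Omega(\wt G)$. I do not anticipate a serious obstacle; the whole argument is a careful unpacking of the interior equations on $T_i$ and $G$ and how they are stitched together at the base vertices.
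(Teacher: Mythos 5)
Your proposal is correct and follows essentially the same route as the paper: a direct verification that splits $\Delta_{\wt G}h$ at a base vertex $x$ into the $G$-part and the tooth-part, and then uses the interior equation $\Delta_{T_i}f(o)=0$ together with $f(o')=0$ and the edge weight $\mu_i$ to produce exactly the cancellation $\Delta_T f(o)=\mu_i f(o)$. Your additional remarks (the explicit check at the leaves and the observation that $x\in\Omega(\wt G)$) only make the same argument slightly more thorough.
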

\begin{proof} The conclusion is clearly true for $i=1$ by noting that $g\equiv c$ in this case. When $i\geq 2$, for any $x\in V(G)$,
\begin{equation}
\begin{split}
(\Delta_{\wt G}) h(x)=&(\Delta_Gh)(x)+(\Delta_{\wt G_x}h)(x)\\
=&f(o)(\Delta_Gg)(x)+g(x)(\Delta_{T}f)(o)\\
=&-\mu_if(o)g(x)+g(x)[(\Delta_{T_i}f)(o)-(f(o')-f(o))\mu_i]\\
=&-\mu_if(o)g(x)+\mu_if(o)g(x)\\
=&0
\end{split}
\end{equation}
by noting that $\Delta_{T_i}f(o)=0$ and $f(o')=0$.

Moreover, when $v\in \wt G_x$ and $v\neq x$,
\begin{equation}
(\Delta_{\wt G}h)(v)=(\Delta_{\wt G_x}h)(v)=g(x)(\Delta_Tf)(\varphi_x(v))=g(x)(\Delta_{T_i}f)(\varphi_x(v)).
\end{equation}
Thus, we get the conclusion of the lemma.
\end{proof}
\begin{lem}\label{lem-sigma-lambda}
Let $(G,B,m,w)$ be a weighted connected finite graph with boundary and $z\in \Omega$.  Let $\wt G$ be a graph formed by adding a Dirichlet boundary edge $\{o,z\}$ of weight $w_{oz}$ to $G$ adjacent to $z$. Then,
\begin{equation}
\sigma_2(G)>\lambda_1(\wt G).
\end{equation}
\end{lem}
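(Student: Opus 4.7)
The plan is to first establish $\lambda_1(\widetilde G)\leq\sigma_2(G)$ via an explicit test function, and then to rule out equality by a Perron-type argument. The case $|B|\leq 1$ is vacuous since $\sigma_2(G)=+\infty$, so assume $|B|\geq 2$ and let $u$ be a $\sigma_2(G)$-eigenfunction of $G$. Since $G$ is connected, the $\sigma_1=0$-eigenspace consists of constants and is $\vv<\cdot,\cdot>_B$-orthogonal to the $\sigma_2$-eigenspace, so $\vv<u,\1>_B=0$. Moreover $u|_B$ cannot be a nonzero constant (by $\vv<u,\1>_B=0$), and cannot be identically zero either (otherwise harmonicity on $\Omega$ with zero boundary data forces $u\equiv 0$), so $u|_B$ is a nonconstant function on $B$.

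Define $f\in\R^{V(\widetilde G)}$ by $f|_{V(G)}=u-u(z)$ and $f(o)=0$. The shift by $u(z)$ is chosen so that $f(z)=f(o)=0$, making the new edge $\{o,z\}$ contribute nothing to $\|df\|^2_{\widetilde G}$. Using Green's identity together with $\vv<u,\1>_B=0$ one obtains
\begin{equation*}
\|df\|^2_{\widetilde G}=\|du\|^2_G=\sigma_2(G)\,\|u\|^2_B,\qquad \|f\|^2_B=\|u\|^2_B+u(z)^2\,m(B),
\end{equation*}
where $m(B)=\sum_{x\in B}m_x$. Since $u|_B$ is nonconstant, $f|_B\not\equiv 0$; thus $f$ is admissible for the variational principle defining $\lambda_1(\widetilde G)$, giving
\begin{equation*}
\lambda_1(\widetilde G)\leq\frac{\sigma_2(G)\,\|u\|^2_B}{\|u\|^2_B+u(z)^2\,m(B)}\leq\sigma_2(G),
\end{equation*}
which is already strict as soon as $u(z)\neq 0$.

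It remains to handle the case $u(z)=0$; suppose for contradiction that $\lambda_1(\widetilde G)=\sigma_2(G)$. Then $f$ attains the minimum in the variational principle, so $f$ is a $\lambda_1$-eigenfunction of $\widetilde G$. Because $\|d|f|\|^2_{\widetilde G}\leq\|df\|^2_{\widetilde G}$ edge-by-edge while $\||f|\|_B=\|f\|_B$, the function $|f|$ is also a minimizer and hence a $\lambda_1$-eigenfunction; note $|f|\geq 0$, $|f|(o)=0$, and $|f||_{V(G)}=|u|$. Translating the eigenvalue equations for $|f|$ back to $G$: at every interior $x\neq z$ the Laplacians $\Delta_G$ and $\Delta_{\widetilde G}$ act identically on functions vanishing at $o$, so $\Delta_G|u|(x)=0$; at $x=z$ the identity $|u|(z)=|u(z)|=0$ makes the $\{o,z\}$-edge contribution vanish, so $\Delta_G|u|(z)=0$ as well; and on $B$ the condition $-\Delta_G|u|=\sigma_2(G)|u|$ is inherited directly. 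Thus $|u|$ is a $\sigma_2(G)$-eigenfunction of $G$, which forces $\vv<|u|,\1>_B=0$; together with $|u|\geq 0$ and $m_x>0$ this yields $|u(x)|=0$ for every $x\in B$, so $u|_B\equiv 0$, and harmonicity then forces $u\equiv 0$, a contradiction. The main obstacle is precisely this last step: the straightforward Rayleigh-quotient bound yields only $\lambda_1(\widetilde G)\leq\sigma_2(G)$, and the whole point of the $|f|$-trick is to show that equality together with $u(z)=0$ would produce a \emph{nonnegative} $\sigma_2$-eigenfunction of $G$, which is impossible.
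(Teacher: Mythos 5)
Your proof is correct and uses the same test function $f|_{V(G)}=u-u(z)$, $f(o)=0$ as the paper, yielding the identical Rayleigh-quotient bound $\lambda_1(\wt G)\leq \sigma_2(G)\|u\|^2_B/(\|u\|^2_B+u(z)^2m(B))$. The only divergence is in excluding equality: you re-derive the needed positivity via the $|f|$-trick and contradict $\vv<|u|,\1>_B=0$, whereas the paper simply notes that equality would make $g$ a $\lambda_1(\wt G)$-eigenfunction vanishing at $z\in\Omega_D(\wt G)$ and invokes Theorem \ref{thm-first-eigenfunction} directly --- both are valid, yours being self-contained and the paper's shorter.
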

\begin{proof}
Let $f$ be an eigenfunction of $\sigma_2(G)$. Then, $\vv<f,1>_B=0$ and
\begin{equation}
\sigma_2(G)=\frac{\vv<df,df>_{G}}{\vv<f,f>_B}.
\end{equation}
Let $g\in \R^{V(\wt G)}$ be such that $g|_{V(G)}=f-f(z)$ and $g(o)=0$. Then,
\begin{equation}
\lambda_1(\wt G)\leq\frac{\vv<dg,dg>_{\wt G}}{\vv<g,g>_{B}}=\frac{\vv<df,df>_{G}}{\vv<f,f>_B+f^2(z)\sum_{x\in B}m_x}\leq \sigma_2(G).
\end{equation}
When $\lambda_1(\wt G)=\sigma_2(G)$, we  know that $f(z)=0$ and $g$ is an eigenfunction of $\wt G$ for $\lambda_1(\wt G)$. However, this is impossible by Theorem \ref{thm-first-eigenfunction}.
\end{proof}
Combining Lemma \ref{lem-comb} and Lemma \ref{lem-sigma-lambda}, we can get  $\sigma_{|V(G)|}(\wt G)$ for $\wt G=\Comb(G;T)$.
\begin{cor}\label{cor-sigma-i-comb}
Let $\wt G=\Comb(G;T)$ and the notations be the same as in Lemma \ref{lem-comb}.  Then, $\sigma_{|V(G)|}(\wt G)=\lambda_1\left(T_{|V(G)|}\right)$.
\end{cor}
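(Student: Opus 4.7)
The plan is to use Lemma~\ref{lem-comb} to enumerate all Steklov eigenvalues of $\wt G$ and then identify the $|V(G)|$-th one via three comparisons. By Lemma~\ref{lem-comb}, with $B:=L(T_1)\setminus\{o\}$, each Steklov eigenvalue of $(T_1,B)$ yields a Steklov eigenvalue of $\wt G$ (coming from $\mu_1=0$), and for each $i\geq 2$ every Steklov eigenvalue of $(T_i,B,\{o'\})$ similarly yields a Steklov eigenvalue of $\wt G$. Counting with multiplicity (with the $\mu_i$ listed with multiplicity), this construction produces $|V(G)|\cdot|B|$ eigenvalues, matching $|B(\wt G)|$, and hence exhausts the Steklov spectrum of $\wt G$.

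I would then establish three comparisons: (a) $\lambda_1(T_i)$ is nondecreasing in $i$; (b) $\sigma_2(T_1,B)>\lambda_1(T_{|V(G)|})$; (c) $\lambda_2(T_i)\geq \sigma_2(T_1,B)$ for all $i\geq 2$. For (a), eliminating the coordinate $f(o')=0$ rewrites the Rayleigh quotient as
\[
\lambda_1(T_i)=\min_{f\in\R^{V(T)},\ f|_B\not\equiv 0}\frac{\vv<df,df>_T+\mu_if(o)^2}{\vv<f,f>_B},
\]
which is manifestly monotone in $\mu_i$. For (b), apply Lemma~\ref{lem-sigma-lambda} to $(T_1,B)$ with $z=o\in\Omega(T_1)$ and added Dirichlet edge of weight $\mu_{|V(G)|}$. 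For (c), the extension $\zeta_u$ defining $\Lambda_0$ on $T_i$ satisfies
\[
\vv<d\zeta_u,d\zeta_u>_{T_i}\ \geq\ \vv<d(\zeta_u|_T),d(\zeta_u|_T)>_T\ \geq\ \vv<d\hat u,d\hat u>_T\ =\ \vv<\Lambda u,u>_B,
\]
where $\hat u$ is the harmonic extension of $u$ to $T_1$ (minimising Dirichlet energy there among extensions of $u$) and $\Lambda$ is the Steklov operator on $T_1$. Hence $\Lambda_0\geq\Lambda$ as quadratic forms on $\R^B$, and the Courant--Fischer min--max yields (c).

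Combining (a)--(c), the smallest $|V(G)|$ Steklov eigenvalues of $\wt G$ (with multiplicity) are $\sigma_1(T_1,B)=0$ together with $\lambda_1(T_2)\leq\lambda_1(T_3)\leq\cdots\leq\lambda_1(T_{|V(G)|})$, while every remaining eigenvalue in the enumeration is strictly greater than $\lambda_1(T_{|V(G)|})$; therefore $\sigma_{|V(G)|}(\wt G)=\lambda_1(T_{|V(G)|})$. The delicate step is (c), which requires correctly matching the constrained extension $\zeta_u$ on $T_i$ with the unconstrained harmonic extension $\hat u$ on $T_1$; modulo that, the remainder of the proof is just bookkeeping on the list produced by Lemma~\ref{lem-comb}.
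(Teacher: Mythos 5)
Your proof is correct and follows essentially the same route as the paper: enumerate the Steklov spectrum of $\wt G$ via Lemma \ref{lem-comb} and use Lemma \ref{lem-sigma-lambda} to get $\sigma_2(T_1,L(T_1)\setminus\{o\})>\lambda_1(T_i)$. The paper's version is much terser --- it leaves your comparisons (a) and (c) implicit --- so your write-up supplies bookkeeping that the published argument omits rather than deviating from it.
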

\begin{proof} By Lemma \ref{lem-sigma-lambda},
$$\sigma_2(T_1,L(T_1)\setminus\{o\})>\lambda_1(T_i).$$
for $i=2,3,\cdots,|V(G)|$. So, by Lemma \ref{lem-comb}, we know that $\sigma_1(\wt G)=0$ and $$\sigma_i(\wt G)=\lambda_1(T_i)$$ for $i=2,3,\cdots, |V(G)|$.
\end{proof}
The following result was essentially obtained in \cite{FR96} and will be used when handling the case $i|n$ in Theorem \ref{thm-main}. Here, we give a more elementary proof of the result for completeness.
\begin{prop}\label{prop-min-top}
Let $G$ be a connected graph on $n\geq 2$ vertices and $P_n$ be a path on $n$ vertices that are both equipped with the unit weight. Then,
\begin{equation}
\mu_n(G)\geq\mu_n(P_n)=4\cos^2\frac{\pi}{2n}.
\end{equation}
The equality holds if and only $G$ is a path when $n$ is even, and is either a cycle or a path when $n$ is odd.
\end{prop}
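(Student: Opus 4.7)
The plan is to reduce to the case of trees via spanning-subgraph monotonicity of $\mu_n$, prove the tree case by a grafting argument, and track equality through both reductions.

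First I would diagonalize $L_{P_n}$ and $L_{C_n}$ directly. For $P_n$ the ansatz $f(k)=\cos((k-\tfrac{1}{2})\theta)$ solves the interior three-term recurrence with eigenvalue $2-2\cos\theta$ and automatically verifies the $k=1$ boundary; the $k=n$ boundary then reduces via a product-to-sum identity to $\sin(n\theta)=0$, giving the spectrum $\{2-2\cos(k\pi/n)\}_{k=0}^{n-1}$ and top eigenvalue $\mu_n(P_n)=2+2\cos(\pi/n)=4\cos^2(\pi/(2n))$. The Fourier ansatz on $C_n$ yields the spectrum $\{2-2\cos(2k\pi/n)\}_{k=0}^{n-1}$, so $\mu_n(C_n)=4$ for $n$ even and $\mu_n(C_n)=4\cos^2(\pi/(2n))$ for $n$ odd. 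Moreover, for any spanning subgraph $H\subset G$, $L_G-L_H$ is a sum of positive semidefinite edge-Laplacians and hence $\mu_n(G)\geq\mu_n(H)$; taking $H$ to be a spanning tree of $G$ reduces the main inequality to the claim that $\mu_n(T)\geq 4\cos^2(\pi/(2n))$ for every tree $T$ on $n$ vertices, with equality iff $T=P_n$.

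The tree case is the technical heart. If $T\neq P_n$, then $T$ has a vertex $v$ of degree $\geq 3$, and a local grafting move (detaching a branch hanging off $v$ and reattaching it at the far endpoint of a neighboring branch) yields a tree $T'$ on the same vertex set. Since $T$ is bipartite, $\mu_n(L_T)=\mu_n(Q_T)$ for the signless Laplacian $Q_T=D_T+A_T$, and by Perron--Frobenius $Q_{T'}$ admits a strictly positive top eigenvector $\phi'$. Using $\phi'$ as a test function for $Q_T$'s Rayleigh quotient and comparing only the two edges modified by the grafting, I would establish
\[
\mu_n(T)\geq\frac{{\phi'}^T Q_T\phi'}{{\phi'}^T\phi'}>\frac{{\phi'}^T Q_{T'}\phi'}{{\phi'}^T\phi'}=\mu_n(T'),
\]
a classical Kelmans-type inequality; iterating yields a strictly monotone chain of trees ending at $P_n$, which settles the tree case.

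For the equality statement, suppose $\mu_n(G)=4\cos^2(\pi/(2n))$. Then every spanning tree of $G$ attains the same value and is therefore $P_n$ by the tree case, so in particular has maximum degree $\leq 2$. Since any vertex $v$ of $G$ appears in a BFS spanning tree rooted at $v$ with its full graph degree $\deg_G(v)$, this forces $\Delta(G)\leq 2$, and the connected graph $G$ is therefore either the path $P_n$ or the cycle $C_n$. The baseline spectral computation then keeps $P_n$ always and $C_n$ only when $n$ is odd, matching the statement. The main obstacle is the grafting inequality, which requires careful comparison of Rayleigh quotients via the Perron vector together with some case analysis on how the moved branch interacts with the bipartition; the rest of the argument is essentially bookkeeping.
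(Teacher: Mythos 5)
Your overall architecture is sound and the endpoints (the explicit spectra of $P_n$ and $C_n$, the spanning-subgraph monotonicity $L_G\succeq L_H$, and the BFS-tree argument showing $\Delta(G)\leq 2$ in the equality case) are all correct, but the route is genuinely different from the paper's and considerably heavier. The paper does not reduce to spanning trees at all: it observes that if $G$ has a vertex $x$ of degree at least $3$ with neighbours $y_1,y_2,y_3$, then the test function with $f(x)=3$, $f(y_i)=-1$ and $f=0$ elsewhere already gives Rayleigh quotient at least $\frac{3\cdot 4^2}{9+3}=4>4\cos^2\frac{\pi}{2n}$, so every graph that is not a path or a cycle is eliminated in one line; only the comparison of $\mu_n(P_n)$ with $\mu_n(C_n)$ remains. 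This completely bypasses the tree case, which in your plan is "the technical heart."

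That technical heart is also where your proposal has a genuine gap. The grafting inequality ${\phi'}^T Q_T\phi'>{\phi'}^T Q_{T'}\phi'$ amounts to showing $\bigl(\phi'(v)+\phi'(w)\bigr)^2>\bigl(\phi'(u)+\phi'(w)\bigr)^2$, where $\{v,w\}$ is the deleted edge, $\{u,w\}$ the new one, and $\phi'$ is the Perron vector of $Q_{T'}$; i.e.\ you must prove $\phi'(v)>\phi'(u)$ for the \emph{new} tree. This is not automatic --- Perron vectors of trees are not monotone along arbitrary branches, and for a general "move a whole subtree to the far end of a neighbouring branch" operation the comparison requires either a Li--Feng type lemma (which is usually stated only for pendant \emph{paths}, forcing you to restructure the induction) or a reflection argument. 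As written, the strictly monotone chain of trees terminating at $P_n$ is asserted rather than established. The conclusion you want (that $P_n$ uniquely minimizes $\mu_n$ among trees on $n$ vertices) is true and classical, so the gap is fillable, but the four-vertex test function in the paper shows that none of this machinery is needed here.
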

\begin{proof}
If there is a vertex $x\in V(G)$ such that $\deg(x)\geq 3$, let $y_1,y_2,y_3\in V(G)$ be such that $x\sim y_i$ for $i=1,2,3$. Let $f\in \R^{V(G)}$ be such that
\begin{equation}
f(v)=\left\{\begin{array}{ll}-1&v=y_1,y_2,y_3\\
3&v=x\\
0& \mbox{otherwise.}
\end{array}\right.
\end{equation}
Then,
 $$\mu_n(G)\geq \frac{\vv<df,df>_{G}}{\vv<f,f>_G}\geq \frac{\sum_{i=1}^3(f(y_i)-f(x))^2}{f^2(x)+\sum_{i=1}^3f^2(y_i)}=4>\mu_n(P_n).$$

If for any $x\in V(G)$, $\deg(x)\leq 2$, then $G$ is either a cycle $C_n$ or a path $P_n$. Note that $$\mu_n(C_n)=4>\mu_n(P_n)$$ when $n$ is even and $$\mu_n(C_n)=\mu_n(P_n)=4\cos^2\frac{\pi}{2n}$$
when $n$ is odd (See \cite[P. 9]{BR}). So, we complete the proof of the theorem.
\end{proof}
Finally, recall the notion of geometric representation of a graph introduced by Friedman \cite{FR93}.
\begin{defn}
\begin{enumerate}
  \item For a simple graph $G$, let $K(G)$ be the one dimensional simplicial complex with the vertex set $V(G)$ corresponding the set of 0-simplexes in $K(G)$ and the edge set $E(G)$ corresponding the set of 1-simplexes such that the boundary points of the 1-simplex $\{x,y\}$ are $x$ and $y$. Then, $K(G)$ is called the one dimensional simplicial complex representing $G$.
  \item A weight on an abstract one dimensional simplicial complex is to assign each 0-simplex a measure and each 1-simplex a length.
  \item Let $(G,m,w)$ be a weighted graph. Assign to each 0-simplex $x$ of $K(G)$ the measure $m_x$ and assign to each 1-simplex $\{x,y\}$ of $K(G)$ the length $l_{xy}=\frac{1}{w_{xy}}$. We will simply denote such a weighted one dimensional simplicial complex as $(K(G),m,\frac1{w})$ and call it the geometric representation of $(G,m,w)$. We also simply denote $(K(G),m,\frac1{w})$ as $K(G)$.
\item Let $(G,m,w)$ be a weighted graph and $K(G)$ be its geometric representation. We then identify each 1-simplex $\{x,y\}$ in $K(G)$ with the interval $[0,\frac{1}{w_{xy}}]$. Let $f\in \R^{V(G)}$, we denote $\wt f:|K(G)|\to \R$ the edgewisely linear extension of $f$. Here $|K(G)|$ is the underlying topological space of $K(G)$.
 \end{enumerate}
\end{defn}
Note that the geometric representation $K(G)$ of a weighted graph $(G,m,w)$ is a one dimensional Riemannian polyhedron in \cite[P. 20]{Pe} equipped the vertex-measure $m$ such that the length of the 1-simplex $\{x,y\}$ is $\frac{1}{w_{xy}}$. So $|K(G)|$ can be naturally viewed as a geodesic space. For any two points $x,y\in |K(G)|$, we denote $|xy|$ the geodesic distance of $x$ and $y$. Moreover, for any $x,y\in |K(G)|$, if $x$ and $y$ are contained in the same edge of $K(G)$, we denote by $[xy]$ the part of that edge lying between $x$ and $y$ (including $x$ and $y$) and $|[xy]|$ means the length of $[xy]$.

Conversely, from a connected open set in $|K(G)|$, we can define its induced graph.
\begin{defn}
Let $(G,B,m,w)$ be a weighted finite graph with boundary $B$ and $K(G)$ be the geometric representation of $(G,m,w)$. For any connected open subset $U$ of  $|K(G)|$, we define its induced graph $G_U$ as the weighted graph with boundary and Dirichlet boundary as follows:\\
(1) $V(G_{U})=\{x\in \ol U\ |\ x\in V(G)\ \mbox{or}\ x\in \p U\}$;\\
(2) $E(G_U)=\{\{x,y\}\ |\  x\neq y\in V(G_U)\ \mbox{lie on the same edge of }|K(G)|\ \mbox{and}\ (xy)\subset  U.\},$ where $(xy)=[xy]\setminus\{x,y\}$;\\
(3) $B_D(G_U)=\p U$;\\
(4) $B(G_U)=B(G)\cap U$;\\
(5) for any $\{x,y\}\in E(G_U)$, $w_{xy}=\frac{1}{|[xy]|}$;\\
(6) for any $x\in V(G_U)$, $m_x=\left\{\begin{array}{ll}m_x& x\not\in B_D(G_U)\\
1&x\in B_D(G_U).
\end{array}\right.$
\end{defn}

Similarly as in \cite{FR96}, we need the following two results when considering  the case $i|n$ in Theorem \ref{thm-main}. They are  essentially contained in \cite{FR96}. Here, we state them in more general settings and give their detailed proofs for completeness.
\begin{prop}\label{prop-top-eigen}
Let $(G,m,w)$ be a weighted connected bipartite finite graph. Then, the greatest Laplacian eigenvalue $\mu_{|V(G)|}(G)$ is of multiplicity one and its eigenfunction $f$ must have alternating signs which means that $f(x)f(y)<0$ when $x\sim y$.
\end{prop}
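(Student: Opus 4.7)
The plan is to exploit the bipartite structure to identify $\mu_{|V(G)|}(G)$ with the Perron--Frobenius eigenvalue of a nonnegative irreducible operator, then transfer back through a sign twist.

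First, fix a bipartition $V(G)=V_1\sqcup V_2$ and define the sign $\epsilon:V(G)\to\{\pm 1\}$ by $\epsilon\equiv 1$ on $V_1$ and $\epsilon\equiv-1$ on $V_2$. For any $f\in\R^{V(G)}$, set $\wt f(x)=\epsilon(x)f(x)$. The map $f\mapsto\wt f$ is an isometry of $(\R^{V(G)},\vv<\cdot,\cdot>_G)$, and because $\epsilon(x)\epsilon(y)=-1$ whenever $x\sim y$, a direct computation gives
$$\vv<d\wt f,d\wt f>_G=\sum_{\{x,y\}\in E(G)}w_{xy}(f(x)+f(y))^2=:Q_+(f).$$
It follows from the Rayleigh characterization that $\mu_{|V(G)|}(G)=\sup_{f\neq 0}Q_+(f)/\vv<f,f>_G$.

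Second, I would identify the self-adjoint operator $T$ on $(\R^{V(G)},\vv<\cdot,\cdot>_G)$ representing $Q_+$. Expanding the square yields
$$Tf(x)=\frac{1}{m_x}\left[\deg_w(x)f(x)+\sum_{y\in V(G)}w_{xy}f(y)\right],$$
so that in the standard basis $T=M^{-1}(D_w+W)$, where $M=\mathrm{diag}(m_x)$, $D_w=\mathrm{diag}(\deg_w(x))$, and $W$ is the weighted adjacency matrix. This matrix has strictly positive diagonal and nonnegative off-diagonal entries, with the $(x,y)$-entry positive precisely on the edges of $G$. Since $G$ is connected, $T$ is irreducible; the positive diagonal makes it primitive. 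Being self-adjoint on $(\R^{V(G)},\vv<\cdot,\cdot>_G)$, its largest eigenvalue equals $\sup Q_+(f)/\vv<f,f>_G=\mu_{|V(G)|}(G)$.

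Third, the Perron--Frobenius theorem applied to $T$ yields that its spectral radius is a simple eigenvalue admitting a strictly positive eigenvector $h$. Setting $f(x)=\epsilon(x)h(x)$ then gives an eigenfunction of $-\Delta_G$ for $\mu_{|V(G)|}(G)$; since $h>0$ everywhere and $\epsilon$ flips across every edge, $f(x)f(y)=-h(x)h(y)<0$ whenever $x\sim y$. Simplicity of $\mu_{|V(G)|}(G)$ as an eigenvalue of $-\Delta_G$ transfers from simplicity for $T$ via the isometric bijection $f\mapsto\wt f$, which matches the $\mu_{|V(G)|}$-eigenspaces of $T$ and of $-\Delta_G$.

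The main obstacle is essentially a bookkeeping one: checking that $T$ is genuinely self-adjoint with respect to $\vv<\cdot,\cdot>_G$, so that the variational maximum of $Q_+$ really recovers its top eigenvalue, and verifying the standard nonnegative/irreducible hypotheses of Perron--Frobenius in the weighted setting. Strict positivity of the Perron eigenvector---not merely nonnegativity---is precisely what upgrades the conclusion from weak to strict sign alternation $f(x)f(y)<0$.
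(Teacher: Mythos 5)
Your proof is correct, and it takes a genuinely different route from the paper's. You conjugate by the sign character $\epsilon$ of the bipartition, observing that $E(-\Delta_G)E=M^{-1}(D_w+W)$ is (entrywise) a nonnegative irreducible matrix that is self-adjoint for $\vv<\cdot,\cdot>_G$, so Perron--Frobenius hands you a simple top eigenvalue with a strictly positive eigenvector, and undoing the sign twist gives both the multiplicity-one claim and the strict alternation $f(x)f(y)<0$ at once. The paper instead argues variationally and directly on $-\Delta_G$: it takes an eigenfunction $f$ for $\mu_{|V(G)|}$, shows via the Rayleigh quotient that $\varphi|f|$ is again an eigenfunction (with $\varphi$ the alternating sign function), rules out zeros of $|f|$ by evaluating the eigenvalue equation at a would-be zero vertex (a maximum-principle step), deduces alternation from the equality case $|f(x)|+|f(y)|=|f(x)-f(y)|$, and gets simplicity because two everywhere-alternating functions cannot be $\vv<\cdot,\cdot>_G$-orthogonal. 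Your approach is cleaner and packages simplicity and strict positivity into one citation, at the cost of invoking Perron--Frobenius (and of the bookkeeping you flag, which does check out: $MT=D_w+W$ is symmetric, so $T$ is self-adjoint, and connectedness with $|V(G)|\geq 2$ gives positive weighted degrees and irreducibility; primitivity is not actually needed). The paper's argument is more elementary and self-contained, and it matches the nodal-domain style of reasoning used throughout the rest of the paper. Only a pedantic remark: for the degenerate one-vertex graph your irreducibility/primitivity claims fail, but the statement is trivially true there, so nothing is lost.
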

\begin{proof}
Let $\varphi$ be a function on $V(G)$ taking values in $\{1,-1\}$ which has alternating signs. The existence of $\varphi$ is guaranteed by that $G$ is bipartite.  Let $f$ be an eigenfunction for $\mu_{|V(G)|}$. Note that
\begin{equation}\label{eq-top-eigen}
\begin{split}
\mu_{|V(G)|}\geq&\frac{\vv<d(\varphi|f|),d(\varphi|f|)>_G}{\vv<\varphi|f|,\varphi|f|>_{G}}\\
=&\frac{\sum_{\{x,y\}\in E(G)}(|f(x)|+|f(y)|)^2w_{xy}}{\vv<f,f>_G}\\
\geq& \frac{\vv<df,df>_G}{\vv<f,f>_G}\\
=&\mu_{|V(G)|}.
\end{split}
\end{equation}
So, $\varphi|f|$ is an eigenfunction of $\mu_{|V(G)|}$.

Next, we want to show that $|f|>0$. Otherwise, there is a vertex $v$ with $f(v)=0$ and some $x\sim v$ with $|f(x)|>0$ since $G$ is connected. Then,
\begin{equation}
\begin{split}
0=&-\mu_{|V(G)|}\varphi(v) |f|(v)\\
=&\Delta_G(\varphi|f|)(v)\\
=&\frac{1}{m_x}\sum_{x\sim v}\varphi(x)|f(x)|w_{xv}\\
=&-\frac{\varphi(v)}{m_x}\sum_{x\sim v}|f|(x)w_{xv}\\
\neq& 0
\end{split}
\end{equation}
which is a contradiction. So $|f|>0$. Moreover, by that the inequalities in \eqref{eq-top-eigen} are equalities,
\begin{equation}
|f(x)|+|f(y)|=|f(x)-f(y)|
\end{equation}
when $x\sim y$. So $f(x)f(y)<0$ when $x\sim y$.  This shows that $f$ has alternating signs.

Suppose the multiplicity of $\mu_{|V(G)|}$ is greater than one. Then, there are two eigenfunctions $f,g$ of $\mu_{|V(G)|}$ such that $\vv<f,g>_{G}=0$. However, this is impossible because both $f$ and $g$ have alternating signs. This completes the proof of the theorem.
\end{proof}
\begin{cor}\label{cor-top-eigen}
Let $(G,m,w)$ be a weighted connected bipartite finite graph and $f$ be a Laplacian eigenfunction for $\mu_{|V(G)|}(G)$. Let $K(G)$ be the geometric representation of $G$ and for any edge $\ol{xy}$ in $|K(G)|$, let $z_{xy}\in [xy]$ be the zero point of $\wt f$. Then, for any vertex $x$,
\begin{equation}
\frac{1}{m_x}\sum_{y\sim x}\frac{1}{\left|[xz_{xy}]\right|}=\mu_{|V(G)|}(G).
\end{equation}
\end{cor}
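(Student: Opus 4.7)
The plan is to reduce the identity to the eigenvalue equation $-\Delta_G f = \mu_{|V(G)|}(G) f$ by expressing the reciprocal length $1/|[xz_{xy}]|$ in terms of the values $f(x)$ and $f(y)$. The alternating-sign conclusion of Proposition \ref{prop-top-eigen} will play the central role, since it guarantees both that $f(x) \neq 0$ at every vertex (so we can divide by $f(x)$) and that each edge contains exactly one interior zero of $\wt f$ determined by linear interpolation.

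First I would invoke Proposition \ref{prop-top-eigen} to note that the eigenfunction $f$ satisfies $f(x) \neq 0$ for all $x \in V(G)$ and $f(x)f(y) < 0$ for all $x \sim y$. Fix an edge $\{x,y\} \in E(G)$, identified with the interval $[0, 1/w_{xy}]$ on which $\wt f$ is linear, running from $f(x)$ at $x$ to $f(y)$ at $y$. Since $f(x)$ and $f(y)$ have opposite signs, $\wt f$ has a unique zero $z_{xy}$ in the interior of $[xy]$, and linear interpolation gives
\begin{equation*}
|[xz_{xy}]| \;=\; \frac{1}{w_{xy}} \cdot \frac{|f(x)|}{|f(x)| + |f(y)|}.
\end{equation*}
Using that $f(x)$ and $f(y)$ have opposite signs, $|f(x)| + |f(y)| = |f(x) - f(y)|$, and in fact $(f(x)-f(y))/f(x) = (|f(x)|+|f(y)|)/|f(x)|$, so
\begin{equation*}
\frac{1}{|[xz_{xy}]|} \;=\; w_{xy} \cdot \frac{f(x) - f(y)}{f(x)}.
\end{equation*}

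Finally I would sum this identity over all neighbors $y \sim x$ and divide by $m_x$:
\begin{equation*}
\frac{1}{m_x} \sum_{y \sim x} \frac{1}{|[xz_{xy}]|} \;=\; \frac{1}{m_x f(x)} \sum_{y \sim x} (f(x) - f(y)) w_{xy} \;=\; \frac{-\Delta_G f(x)}{f(x)} \;=\; \mu_{|V(G)|}(G),
\end{equation*}
where the last equality uses that $f$ is an eigenfunction for $\mu_{|V(G)|}(G)$. There is no real obstacle here; the only delicate point is the justification that $f(x) \neq 0$ at every vertex and that each edge carries an interior zero, both of which are consequences of the alternating-sign statement from the previous proposition.
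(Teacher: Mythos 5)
Your proof is correct and follows essentially the same route as the paper: the paper rewrites the eigenvalue equation using the equality of slopes $\frac{f(x)-f(y)}{|[xy]|}=\frac{\wt f(x)-\wt f(z_{xy})}{|[xz_{xy}]|}$ together with $\wt f(z_{xy})=0$ and then divides by $f(x)\neq 0$, which is exactly your linear-interpolation identity $\frac{1}{|[xz_{xy}]|}=w_{xy}\frac{f(x)-f(y)}{f(x)}$ in a slightly different packaging. Both arguments rest on Proposition \ref{prop-top-eigen} for the alternating signs and nonvanishing of $f$.
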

\begin{proof}
Note that $f(x)\neq 0$ and $w_{xy}=\frac{1}{|[xy]|}$ for any edge $\{x,y\}$. So
\begin{equation}
\begin{split}
\mu_{|V(G)|}(G)f(x)=&-(\Delta_G f)(x)\\
=&\frac{1}{m_x}\sum_{y\sim x}\frac{f(x)-f(y)}{|[xy]|}\\
=&\frac{1}{m_x}\sum_{y\sim x}\frac{\wt f(x)-\wt f(z_{xy})}{|[xz_{xy}]|}\\
=&\frac{1}{m_x}\sum_{y\sim x}\frac{1}{|[xz_{xy}]|}f(x).
\end{split}
\end{equation}
Hence, $\frac{1}{m_x}\sum_{y\sim x}\frac{1}{|[xz_{xy}]|}=\mu_{|V|}(G)$.
\end{proof}
\section{A monotonicity of Steklov eigenvalues}\label{sec-mono}
In this section, we prove Theorem \ref{thm-mono} and discuss the rigidity when the equalities of \eqref{eq-mono} hold.
\begin{proof}[Proof of Theorem \ref{thm-mono}]
Let $\varphi_1=1,\varphi_2,\cdots,\varphi_{|\wt B|}\in \R^{V(\wt G)}$ be an orthogonal system of eigenfunctions of the DtN map for $(\wt G,\wt B)$ such that $\varphi_i$ is an eigenfunction of $\sigma_i(\wt G)$ for $i=1,2,\cdots |\wt B|$.  Let $\psi_1=1,\psi_2,\cdots,\psi_{|B|}\in \R^{V(G)}$ be an orthogonal system of eigenfunctions of the DtN map for $(G,B)$ such that $\psi_i$ is an eigenfunction of $\sigma_i(G)$ for $i=1,2,\cdots,|B|$. For each $2\leq i\leq |\wt B|$,
let $$\varphi=c_1\varphi_1+c_2\varphi_2+\cdots+c_{i}\varphi_i$$ with $c_1,c_2,\cdots,c_i$ not all zero such that
\begin{equation}\label{eq-orthogonal}
\vv<\varphi,\psi_j>_{B}=0
\end{equation}
for $j=1,2,\cdots,i-1$. The existence of such $c_1,c_2,\cdots,c_i$ comes from the fact that \eqref{eq-orthogonal} with $j=1,2\cdots,i-1$ form a linear homogeneous system with $i-1$ equations on $i$ unknowns: $c_1,c_2,\cdots,c_{i}$ which certainly has nonzero solutions. Then, by that $E(G)\subset E(\wt G)$ and $\wt B\subset B$,
\begin{equation}
\begin{split}
\sigma_i(\wt G,\wt B)\geq\frac{\vv<d\varphi,d\varphi>_{\wt G}}{\vv<\varphi,\varphi>_{\wt B}}
\geq\frac{\vv<d\varphi,d\varphi>_{G}}{\vv<\varphi,\varphi>_{B}}
\geq\sigma_i(G,B).
\end{split}
\end{equation}
This completes the proof of the first part of the theorem.

When $\wt G$ is a combinatorial finite graph and $G$ is its spanning subgraph, it is clear that $B(G)\supset B(\wt G)$. So, the conclusion follows. This completes the proof of the theorem.
\end{proof}
For completeness, we also discuss the rigidity for \eqref{eq-mono} to hold for all $i=1,2\cdots,|\wt B|$.
\begin{thm}\label{thm-rigidity} Let $(\wt G, \wt B,m,w)$ be a weighted connected finite graph with $|\wt B|\geq 2$ and $(G,B)$ be such that $G$ is a connected subgraph of $\wt G$ and $B\supset \wt B$. Then, the equalities of \eqref{eq-mono} hold for all $i=1,2,\cdots, |\wt B|$ if and only if all the following statements are true:
\begin{enumerate}
\item $B\setminus\wt B\subset Z(\wt G)$;
\item for any $u\in H(\wt G)$, $u|_{V(\wt G_x)}$ is a constant for any $x\in V(G)$;
\item for any $v\in \R^{V(G)}$ with $\vv<v,1>_{B}=0$ and $v|_{\wt B}$ being constant,
\begin{equation}
\vv<dv,dv>_{G}\geq \sigma_{|\wt B|}(\wt G)\vv<v,v>_{B}.
\end{equation}
\end{enumerate}
Here $$H(\wt G)=\{f\in \R^{V(\wt G)}\ |\ (\Delta_{\wt G}f)|_{\Omega(\wt G)}=0,\ \mbox{and}\ \vv<f,1>_{\wt B}=0.\}$$
and
$$Z(\wt G)=\{x\in \Omega(\wt G)\ |\ f(x)=0\ \mbox{for any }f\in H(\wt G).\}.$$
In particular, if $B=\wt B$ and $H(\wt G)$ separating vertices in $V(G)$, then the equalities of \eqref{eq-mono} hold for $i=1,2,\cdots,|\wt B|$ if and only if $\wt G$ is a  comb over $G$.
\end{thm}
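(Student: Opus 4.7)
I will trace through the chain
\[\sigma_i(\wt G) \geq \vv<d\varphi, d\varphi>_{\wt G}/\vv<\varphi, \varphi>_{\wt B} \geq \vv<d\varphi, d\varphi>_G/\vv<\varphi, \varphi>_B \geq \sigma_i(G)\]
from the proof of Theorem \ref{thm-mono}, identify when each step is an equality, and combine the resulting characterizations over all $i$. The key algebraic observation is that if $A \geq B > 0$, $D \geq C > 0$, and $A/C = B/D$, then necessarily $A = B$ \emph{and} $C = D$ (since $AD \geq BD \geq BC$). With $A = \vv<d\varphi, d\varphi>_{\wt G}$, $B = \vv<d\varphi, d\varphi>_G$, $C = \vv<\varphi, \varphi>_{\wt B}$, $D = \vv<\varphi, \varphi>_B$, the equality $\sigma_i(\wt G) = \sigma_i(G)$ will force the test function $\varphi$ to be constant on each $\wt G_x$ and to vanish on $B \setminus \wt B$; connectedness of $G$ with $|\wt B| \geq 2$ rules out the degenerate cases where $B$ or $D$ vanishes.

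\textbf{Sufficiency.} Assuming (1)--(3), I will first show that every $u \in H(\wt G)$ restricts to an eigenfunction of $\Lambda_G$ with the same eigenvalue. By (2), $u$ is constant on each tooth $\wt G_x$, so every edge of $E(\wt G) \setminus E(G)$ contributes zero to $\vv<du, du>_{\wt G}$; by (1), $u$ vanishes on $B \setminus \wt B$. A direct computation then shows that the $G$-Laplacian agrees with the $\wt G$-Laplacian on $V(G)$, and at vertices of $B \setminus \wt B$ both sides of the Steklov boundary equation vanish. Together with the constant function (for $\sigma_1 = 0$), this yields $|\wt B|$ linearly independent eigenfunctions of $\Lambda_G$ realizing the eigenvalues of $\Lambda_{\wt G}$ and spanning a subspace $W \subset \R^B$. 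A short computation identifies $W^\perp = \{v \in \R^B : v|_{\wt B} \text{ is constant and } \vv<v, 1>_B = 0\}$. Applying (3) to the harmonic extension $\hat v$ of any $v \in W^\perp$ and using $\vv<dv, dv>_G \geq \vv<d\hat v, d\hat v>_G$ shows the Rayleigh quotient on $W^\perp$ is at least $\sigma_{|\wt B|}(\wt G)$, so the $|\wt B|$ matched eigenvalues are indeed the first $|\wt B|$ eigenvalues of $\Lambda_G$.

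\textbf{Necessity.} Assume $\sigma_i(\wt G) = \sigma_i(G)$ for all $i \leq |\wt B|$. I will prove (1) and (2) jointly by induction on $i \geq 2$, showing at each step that $\varphi_i$ is constant on each $\wt G_x$ and that $\varphi_i|_{B \setminus \wt B} = 0$. Given the inductive hypothesis for $j < i$, the sufficiency analysis identifies $\varphi_j|_{V(G)}$ as an eigenfunction of $\Lambda_G$ for $\sigma_j(G) = \sigma_j(\wt G)$, so I will take these (properly normalized) as the $\psi_j$'s in the construction from Theorem \ref{thm-mono}. The orthogonality constraints $\vv<\varphi, \psi_j>_B = 0$ then decouple: because $\varphi_j|_{B \setminus \wt B} = 0$ reduces $\vv<\varphi_k|_B, \varphi_j|_B>_B$ to $\vv<\varphi_k, \varphi_j>_{\wt B} = \delta_{jk}\|\varphi_j\|_{\wt B}^2$, they force $c_k = 0$ for $k < i$, giving $\varphi = c_i \varphi_i$ with $c_i \neq 0$. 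The algebraic observation above then delivers (1) and (2) for $\varphi_i$. For (3), once (1) and (2) are in hand, I will invoke $\sigma_{|\wt B|+1}(G) \geq \sigma_{|\wt B|}(G) = \sigma_{|\wt B|}(\wt G)$ together with the identification of $W^\perp$ to obtain the Rayleigh bound for harmonic extensions, and pass to arbitrary $v \in \R^{V(G)}$ via the Dirichlet-energy inequality.

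\textbf{Main obstacle and special case.} The principal subtlety is the inductive step in necessity: handling possible eigenvalue multiplicities requires selecting an eigenbasis $\{\psi_j\}$ of $\Lambda_G$ so that the orthogonality conditions decouple cleanly, and one must check that identifying $\psi_j$ with $\varphi_j|_{V(G)}$ is compatible with the min-max at every level. For the final clause, when $B = \wt B$: condition (1) is vacuous, and condition (3) collapses to a triviality, since $v|_{\wt B}$ being constant together with $\vv<v, 1>_B = 0$ forces $v|_B \equiv 0$ so that the right-hand side of (3) vanishes. Thus only (2) is constraining. Under the separation hypothesis on $H(\wt G)$, (2) is equivalent to pairwise disjointness of the teeth $\wt G_x$: a shared vertex $z \in \wt G_x \cap \wt G_y$ with $x \neq y$ would force $u(x) = u(z) = u(y)$ for every $u \in H(\wt G)$, contradicting separation; conversely, if the teeth are disjoint, each tooth attaches to the rest of $\wt G$ only through $x$, and the unique solvability of the Dirichlet problem on the connected tooth with single boundary vertex $x$ makes $u|_{\wt G_x}$ equal to the constant $u(x)$. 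This identifies $\wt G$ as a comb over $G$.
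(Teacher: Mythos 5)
Your proposal is correct and follows essentially the same route as the paper: both squeeze the inequality chain from the proof of Theorem \ref{thm-mono}, extract conditions (1)--(2) from the forced equalities $\vv<d\varphi,d\varphi>_{\wt G}=\vv<d\varphi,d\varphi>_{G}$ and $\vv<\varphi,\varphi>_{\wt B}=\vv<\varphi,\varphi>_{B}$, and obtain (3) by identifying the orthogonal complement of the matched eigenfunctions with the set of $v$ that are constant on $\wt B$ and have mean zero on $B$; the treatment of the comb case is also identical. The only caveat is cosmetic: in your decoupling step the constraint against $\psi_1=1$ only yields $c_1\vv<1,1>_{B}+c_i\vv<\varphi_i,1>_{B\setminus\wt B}=0$ rather than $c_1=0$ outright (since $\varphi_i|_{B\setminus\wt B}=0$ is not yet known at that stage), but $c_1=0$ then follows from equality in the first step of the chain because $\sigma_i(\wt G)\geq\sigma_i(G)>0$ for $i\geq 2$.
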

\begin{proof}
If equalities of \eqref{eq-mono} hold for $i=1,2,\cdots, |\wt B|$, we first claim: \emph{There are $|\wt B|$ functions $u_1=1,u_2,\cdots,u_{|\wt B|}\in \R^{V(\wt G)}$ such that
\begin{enumerate}
\item[(i)] $u_i$ is an eigenfunction for $\sigma_i(\wt G,\wt B)$ for $i=1,2,\cdots, |\wt B|$;
\item[(ii)] $v_i:=u_i|_{V(G)}$ is an eigenfunction for $\sigma_i(G,B)$ for $i=1,2,\cdots, |\wt B|$;
\item[(iii)] $u_i|_{B\setminus \wt B}=0$ for $i=2,3,\cdots,|\wt B|$;
\item[(iv)]  $u_i(x)=u_i(y)$ for any edge $\{x,y\}\in E(\wt G)\setminus E(G)$ and  $i=1,2,\cdots, |\wt B|$;
\item[(v)] $\vv<u_i,u_j>_{B}=0$ for $1\leq j<i\leq |\wt B|$.
\end{enumerate}}
We will show the claim by induction. For $i\geq 2$, suppose that $u_1=1,u_2,\cdots, u_{i-1}$ has been constructed. Let  $$u_i=c_1\varphi_1+c_2\varphi_2+\cdots+c_i\varphi_i$$
with $c_1,c_2,\cdots,c_i$ are constants not all zero such that
\begin{equation}
\vv<u_i,u_j>_{B}=0
\end{equation}
for $j=1,2,\cdots,i-1$. Here $\varphi_1,\varphi_2,\cdots,\varphi_{|\wt B|}$ are the same as in the proof of Theorem \ref{thm-mono}. For the same reason as before, the existence of such constants is clear. Then,
\begin{equation}
\begin{split}
\sigma_i(G,B)=\sigma_i(\wt G,\wt B)\geq\frac{\vv<du_i,du_i>_{\wt G}}{\vv<u_i,u_i>_{\wt B}}
\geq\frac{\vv<du_i,du_i>_{G}}{\vv<u_i,u_i>_{B}}
\geq\sigma_i(G,B).
\end{split}
\end{equation}
So, the above inequalities are all equalities which implies that $u_i$ satisfies the above properties (i)--(v). This completes the proof of the claim.

Note that
$$H(\wt G)=\Span\{u_2,u_3,\cdots, u_{|\wt B|}\}.$$
So, we get (1) and (2) by (iii) and (iv). Conversely, when (1) and (2) are true, for any eigenfunction $u\in \R^{V(\wt G)}$ of $\sigma_i(\wt G)$ with $i\geq 2$, $v=u|_{V(G)}$ is an eigenfunction of $\sigma_i(\wt G)$ since
\begin{equation}
\Delta_{G}v(x)=\Delta_{\wt G}u(x)
\end{equation}
for any $x\in V(G)$. Thus, under the assumption of (1) and (2), the equalities of \eqref{eq-mono} hold for $i=1,2,\cdots, |\wt B|$ if and only if for any $v\in \R^{V(G)}$ with
\begin{equation}
\vv<v,u_i>_{B}=0
\end{equation}
for $i=1,2,\cdots, |\wt B|$,
\begin{equation}
\vv<dv,dv>_{G}\geq \sigma_{|\wt B|}(\wt G)\vv<v,v>_{B}.
\end{equation}
By (iii), we know that
$$\vv<v,u_i>_{\wt B}=\vv<v,u_i>_{B}=0$$
for $i=2,\cdots,|\wt B|$. So $v|_{\wt B}$ is constant and $\vv<v,1>_B=0$. Conversely, for any $v\in \R^{V(G)}$ such that $v|_{\wt B}$ is constant and $\vv<v,1>_B=0$, it is clear that  $\vv<v,u_i>_{B}=0$ for  $i=1,2,\cdots, |\wt B|$. This completes the proof of the first conclusion the theorem.

When $H(\wt G)$ separating vertices in $V(G)$, by (2), we know that
$$V(\wt G_x)\cap V(\wt G_y)=\emptyset$$
for any $x\neq y\in V(G)$ if the equalities of \eqref{eq-mono} hold for $i=1,2,\cdots, |\wt B|$. So, $\wt G$ is a comb over $G$. Conversely, it is clear that if $\wt G$ is a comb over $G$ and $B=\wt B$, then the equalities of \eqref{eq-mono} hold for $i=1,2,\cdots,|\wt B|$. This completes the proof of the theorem.
\end{proof}
When considering the rigidity of the case $n=im+1$ in Theorem \ref{thm-main}, we need the following result.
\begin{lem}\label{lem-graph-tree}
Let $\wt G$ be a nontrivial connected combinatorial graph and $G$ be its spanning tree. Suppose that the first positive Steklov eigenvalue of $G$ is of  multiplicity $i-1$ for some $2\leq i\leq |B(G)|$. Suppose that $\sigma_i(\wt G)=\sigma_i(G)$ and
\begin{enumerate}
\item $H_1(G)$ separates vertices in $\Omega(G)$, and
\item for any $x\in B(G)$, there is a function $f\in H_1(G)$ such that $f(x)\neq 0$,
\end{enumerate}
where $H_1(G)$ is the eigenspace of the first positive Steklov eigenvalue of $G$.
Then, $\wt G=G$.
\end{lem}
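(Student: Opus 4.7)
The plan is to adapt the rigidity argument behind Theorem~\ref{thm-rigidity} to the partial-equality setting here, and then to use assumptions (1) and (2) to force both $B(G)=B(\wt G)$ and $E(G)=E(\wt G)$. Since $G$ is a spanning subgraph of the combinatorial graph $\wt G$, we have $V(G)=V(\wt G)$ and $B(G)\supset B(\wt G)$. By Theorem~\ref{thm-mono}, $\sigma_j(\wt G)\geq \sigma_j(G)$ for every $j$. The multiplicity hypothesis gives $\sigma_2(G)=\cdots=\sigma_i(G)$, and together with $\sigma_i(\wt G)=\sigma_i(G)$ and the monotonicity this forces $\sigma_j(\wt G)=\sigma_j(G)$ for each $j=1,2,\ldots,i$.

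Next I would run the inductive construction from the proof of Theorem~\ref{thm-rigidity}, but only for indices $j=2,3,\ldots,i$. The argument at stage $j$ there uses only the equality $\sigma_j(\wt G,\wt B)=\sigma_j(G,B)$ together with the previously-built $u_1,\ldots,u_{j-1}$, so it goes through unchanged. This produces functions $u_2,\ldots,u_i\in \R^{V(\wt G)}$ for which (i) $u_j$ is an eigenfunction of $\sigma_j(\wt G)$, (ii) $u_j|_{V(G)}$ is an eigenfunction of $\sigma_j(G)$ and so lies in $H_1(G)$, (iii) $u_j$ vanishes on $B(G)\setminus B(\wt G)$, (iv) $u_j(x)=u_j(y)$ whenever $\{x,y\}\in E(\wt G)\setminus E(G)$, and (v) the $u_j$ are pairwise orthogonal on $\wt B:=B(\wt G)$. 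Since each $u_j$ is the harmonic extension of its restriction $u_j|_{\wt B}$ and is nonzero, orthogonality on $\wt B$ forces the $u_j$ themselves to be linearly independent in $\R^{V(\wt G)}=\R^{V(G)}$; as $H_1(G)$ is $(i-1)$-dimensional, the family $\{u_j|_{V(G)}\}_{j=2}^{i}$ therefore spans $H_1(G)$.

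From (iii), every $f\in H_1(G)$ must vanish on $B(G)\setminus B(\wt G)$. Combined with assumption (2), which provides, for each $x\in B(G)$, some $f\in H_1(G)$ with $f(x)\neq 0$, this forces $B(G)=B(\wt G)$. From (iv), every $f\in H_1(G)$ takes the same value on the two endpoints of any edge $\{x,y\}\in E(\wt G)\setminus E(G)$. I claim no such edge exists. If both endpoints lie in $\Omega(G)$, this contradicts assumption (1). If instead some endpoint $x$ lies in $B(G)=B(\wt G)$, then $\deg_{\wt G}(x)\leq 1$, so $y$ is the unique neighbor of $x$ in $\wt G$; but $G$ is a spanning tree on $|V(G)|\geq 2$ vertices, hence $x$ has at least one neighbor $z$ in $G$, and $\{x,y\}\notin E(G)$ forces $z\neq y$, whence $\{x,z\}\in E(G)\subset E(\wt G)$ yields a second neighbor of $x$ in $\wt G$, contradicting $\deg_{\wt G}(x)\leq 1$. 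Thus $E(\wt G)\setminus E(G)=\emptyset$, and $\wt G=G$.

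The main obstacle is conceptual rather than computational: one must notice that the inductive construction in the proof of Theorem~\ref{thm-rigidity} is local in the index $j$ and therefore survives in the partial-equality regime $j\leq i$, despite the fact that we have no information about $\sigma_j(\wt G)$ versus $\sigma_j(G)$ for $j>i$. Once the $u_j$'s are in hand, both structural conclusions are controlled by the spanning property of $\{u_j|_{V(G)}\}$ inside $H_1(G)$, which converts hypotheses (1) and (2) directly into $B(G)=B(\wt G)$ and $E(G)=E(\wt G)$.
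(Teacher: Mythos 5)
Your proposal is correct and follows essentially the same route as the paper: reduce to equality of $\sigma_j$ for $2\le j\le i$ via monotonicity and the multiplicity hypothesis, run the inductive construction from the proof of Theorem~\ref{thm-rigidity} to produce $u_2,\dots,u_i$ spanning $H_1(G)$, and then convert hypotheses (2) and (1) into $B(G)=B(\wt G)$ and $E(\wt G)\setminus E(G)=\emptyset$ respectively. Your explicit degree argument ruling out extra edges incident to boundary vertices is a small detail the paper leaves implicit, but the argument is the same.
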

\begin{proof} Because $\sigma_i(\wt G)=\sigma_i(G)=\sigma_2(G)$, by Theorem \ref{thm-mono}, we know that
\begin{equation}
\sigma_j(\wt G)=\sigma_j(G)=\sigma_2(G)
\end{equation}
for $j=2,3,\cdots,i$. Then, by the same argument as in the proof of Theorem \ref{thm-rigidity}, there are $u_1\equiv 1,u_2,\cdots,u_i\in \R^{V(G)}$ such that
\begin{enumerate}
\item[(i)] $u_j$ is an eigenfunction both for $G$ and $\wt G$ with respect to $\sigma_j(G)$ for $j=1,2,\cdots, i$;
\item[(ii)] $u_j|_{B(G)\setminus B(\wt G)}=0$ for $j=2,3,\cdots,i$;
\item[(iii)]  $u_j(x)=u_j(y)$ for any edge $\{x,y\}\in E(\wt G)\setminus E(G)$ and  $j=2,3,\cdots, i$;
\item[(iv)] $\vv<u_j,u_k>_{B(G)}=0$ for $1\leq j<k\leq i$.
\end{enumerate}
Note that $H_1(G)$ is generated by $u_2,u_3,\cdots,u_i$. By assumption (2) and (ii), we know that $B(G)=B(\wt G)$.  Moreover, by assumption (1) and (iii), for any $x\neq y\in \Omega(G)$, $V(\wt G_x)\cap V(\wt G_y)=\emptyset$. This implies that $\wt G=G$ since $G$ is a spanning tree of $\wt G$.
\end{proof}
\section{First Steklov eigenfunction with Dirichlet boundary data and Steklov nodal domains}\label{sec-nodal}
In this section, we extend the Friedman's theory of nodal domains for Laplacian eigenfunctions to Steklov eigenfunctions. We first show that the first Steklov eigenfunction with vanishing Dirichlet boundary data is positive and the first eigenvalue is of multiplicity one which is a discrete version of Courant's result for Steklov eigenvalues.
\begin{thm}\label{thm-first-eigenfunction}
Let $(G,B,B_D,m,w)$ be a weighted connected finite graph with boundary $B$ and Dirichlet boundary $B_D$.  Suppose that the induced subgraph $G[\Omega_D]$ of $G$ on $\Omega_D$ is connected. Then, the eigenfunctions of $\lambda_1(G,B,B_D)$ must be everywhere positive or everywhere negative in $\Omega_D$.
\end{thm}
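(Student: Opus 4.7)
The plan is to run the standard Courant/minimum-principle argument for $\lambda_1$, adapted to the Steklov setting with vanishing Dirichlet boundary data. The starting point is the Rayleigh-quotient characterization
\[
\lambda_1(G,B,B_D)=\min\left\{\frac{\vv<df,df>_{G}}{\vv<f,f>_B}\ \bigg|\ f\in\R^{V(G)},\ f|_{B_D}=0,\ f|_B\not\equiv 0\right\},
\]
noted in Section~\ref{sec-pre}. The key computation is that on any edge $\{x,y\}\in E(G)$ one has $(|f(y)|-|f(x)|)^2\leq (f(y)-f(x))^2$, with equality if and only if $f(x)f(y)\geq 0$. So if $f$ is a first eigenfunction, then $|f|$ is an admissible test function with $\vv<|f|,|f|>_B=\vv<f,f>_B$ and $\vv<d|f|,d|f|>_G\leq\vv<df,df>_G$. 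Since $\lambda_1$ is the minimum, $|f|$ is also a first eigenfunction, and moreover $f(x)f(y)\geq 0$ on every edge $\{x,y\}\in E(G)$ with both endpoints in $\Omega_D$ (on edges touching $B_D$ equality is automatic).

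Next I would invoke the strong minimum principle for the harmonic/Steklov system. The nonnegative function $|f|$ satisfies $\Delta_G|f|=0$ on $\Omega$, $\Delta_G|f|=-\lambda_1|f|$ on $B$, and $|f|=0$ on $B_D$. Suppose $|f|(x_0)=0$ for some $x_0\in\Omega_D$. Writing out the eigenvalue equation at $x_0$ gives
\[
\sum_{y\sim x_0}|f|(y)\,w_{x_0y}=0,
\]
and since every summand is nonnegative, $|f|(y)=0$ for every neighbor $y$ of $x_0$ in $V(G)$. Because $G[\Omega_D]$ is connected by assumption, iterating this propagation forces $|f|\equiv 0$ on $\Omega_D$; combined with $|f|\equiv 0$ on $B_D$ we would obtain $f\equiv 0$, contradicting $f$ being an eigenfunction (in particular $f|_B\not\equiv 0$, since otherwise harmonic extension from $B\cup B_D$ would give $f\equiv 0$). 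Hence $|f|>0$ on all of $\Omega_D$.

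Finally, combining the two pieces finishes the argument: $f$ is nonvanishing on $\Omega_D$, and on every edge of $G[\Omega_D]$ the values of $f$ at the two endpoints have the same sign. Connectedness of $G[\Omega_D]$ then forces a single sign throughout, so $f$ is either everywhere positive or everywhere negative on $\Omega_D$. The main subtlety to watch out for is distinguishing edges with an endpoint in $B_D$ (where $f$ vanishes and contributes no information about sign) from edges internal to $\Omega_D$ (where the strict inequality in the Rayleigh quotient comparison kicks in); once this bookkeeping is done the argument is routine.
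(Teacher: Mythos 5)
Your proposal is correct and follows essentially the same route as the paper: replace $f$ by $|f|$ via the Rayleigh quotient (forcing $f(x)f(y)\geq 0$ on edges), then run the strong minimum principle at a hypothetical zero of $|f|$ in $\Omega_D$ — splitting into the cases $x_0\in\Omega$ and $x_0\in B$ exactly as the paper does — and use connectedness of $G[\Omega_D]$ to conclude. The only cosmetic difference is that you propagate zeros outward from $x_0$ while the paper picks a zero vertex adjacent to a nonzero one and derives the contradiction there; the content is identical.
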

\begin{proof}
Let $f_0\in \R^{V(G)}$ be an eigenfunction for $\lambda_1(G,B_N,B_D)$. Then, $f_0$ is a minimizer of the Rayleigh quotient
\begin{equation}
R_0[f]=\frac{\vv<df,df>_G}{\vv<f,f>_{B}}
\end{equation}
among all $f\in\R^{V(G)}$ with $f|_{B}\not\equiv 0$ and $f|_{B_D}\equiv 0$. First, note that
\begin{equation}
\begin{split}
\vv<d|f|,d|f|>_G=&\sum_{\{x,y\}\in E(G)}(|f|(x)-|f|(y))^2w_{xy}\\
\leq& \sum_{\{x,y\}\in E(G)}(f(x)-f(y))^2w_{xy}\\
=&\vv<df,df>_G
\end{split}
\end{equation}
with strictly inequality when for some edge $\{x,y\}$, $f(x)<0<f(y)$. Moreover,
\begin{equation}
\vv<f_0,f_0>_{B}=\vv<|f_0|,|f_0|>_{B}.
\end{equation}
So, $|f_0|$ is also a first eigenfunction and
\begin{equation}
\vv<df_0,df_0>_G=\vv<d|f_0|,d|f_0|>_G.
\end{equation}
We then only need to show that $|f_0|>0$ on $\Omega_D$. Otherwise, because $f_0\not\equiv 0$ and $G[\Omega_D]$ is connected, there is a vertex $v\in \Omega_D$, so that $|f_0|(v)=0$ and $|f_0|(x)>0$ for some $x\in\Omega_D$ with $x\sim v$. If $v\in \Omega$, then
\begin{equation}
0=\Delta_G |f_0|(v)=\frac{1}{m_v}\sum_{y\sim v}(|f_0|(y)-|f_0|(v))w_{vy}=\frac{1}{m_v}\sum_{y\sim v}|f_0|(y)w_{vy}>0
\end{equation}
which is a contradiction. If $v\in B$, then
\begin{equation}
0=\lambda_1|f_0|(v)=\frac{\p |f_0|}{\p n}(v)=-\frac{1}{m_v}\sum_{y\sim v}|f_0|(y)w_{vy}<0
\end{equation}
which is also a contradiction. This completes the proof of the theorem.
\end{proof}
By Theorem \ref{thm-first-eigenfunction}, we have the following  straightforward consequence.
\begin{cor}\label{cor-first-eigenfunction}
Let $(G,B,B_D,m,w)$ be a weighted connected finite graph with  boundary $B$ and Dirichlet boundary $B_D$.  Suppose that the induced subgraph $G[\Omega_D]$ is connected. Then, $\lambda_1(G,B,B_D)$ is of multiplicity one and any higher eigenfunctions must change signs on $B$.
\end{cor}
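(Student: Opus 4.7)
The plan is to deduce both assertions from Theorem \ref{thm-first-eigenfunction} by a short linear-algebra argument, with no further analytic input. For the multiplicity claim, I would argue by contradiction: let $f$ and $g$ be two eigenfunctions of $\lambda_1(G,B,B_D)$. By Theorem \ref{thm-first-eigenfunction}, each of $f$ and $g$ is either strictly positive or strictly negative throughout $\Omega_D$; in particular $f(v), g(v) \neq 0$ for every $v \in \Omega_D$. Picking any such $v$ I consider
$$h := g(v)\, f - f(v)\, g,$$
which lies in the eigenspace of $\lambda_1$ and satisfies $h(v) = 0$. If $h \not\equiv 0$, then $h$ would itself be a $\lambda_1$-eigenfunction vanishing at a point of $\Omega_D$, contradicting Theorem \ref{thm-first-eigenfunction}. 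Hence $h \equiv 0$, and since $g(v) \neq 0$ this forces $f$ and $g$ to be linearly dependent, giving that $\lambda_1$ has multiplicity one.

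For the sign-change assertion I would use self-adjointness of $\Lambda_0$. By the previous step we may fix a first eigenfunction $f_1$ with $f_1 > 0$ on $\Omega_D$, in particular on $B$. Let $g$ be an eigenfunction for some $\lambda_k$ with $k \geq 2$. Since $\Lambda_0$ is self-adjoint on $(\R^B, \vv<\cdot,\cdot>_B)$ and $\lambda_k \neq \lambda_1$, the boundary restrictions satisfy
$$\vv<g, f_1>_B = \sum_{x \in B} g(x) f_1(x)\, m_x = 0.$$
A brief check shows $g|_B$ is not identically zero: otherwise $g$ would vanish on $B \cup B_D$ and be harmonic on $\Omega$, and by unique solvability of the Dirichlet problem on the connected graph $G[\Omega_D]$ this would force $g \equiv 0$, contradicting the assumption that $g$ is an eigenfunction. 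Combined with $f_1 > 0$ on $B$, the orthogonality relation then forces $g|_B$ to attain both positive and negative values on $B$, so $g$ itself changes sign. The only step requiring a little care beyond directly invoking Theorem \ref{thm-first-eigenfunction} is this non-vanishing of $g$ on $B$, which is precisely where the connectedness hypothesis on $G[\Omega_D]$ is used.
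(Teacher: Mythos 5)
Your proof is correct. The sign-change half is essentially the paper's own argument: fix the positive first eigenfunction, use self-adjointness of $\Lambda_0$ to get $\langle g,f_1\rangle_B=0$, and conclude that $g|_B$ takes both signs. For the multiplicity half you take a genuinely different (though equally standard) route: the paper picks two $\langle\cdot,\cdot\rangle_B$-orthogonal eigenfunctions in a putative two-dimensional eigenspace, notes that Theorem \ref{thm-first-eigenfunction} makes both of them (after a sign flip) strictly positive on $B$, and derives a contradiction with orthogonality; you instead form the combination $h=g(v)f-f(v)g$, which vanishes at a point of $\Omega_D$ and hence must be identically zero by the same theorem. Both arguments rest entirely on Theorem \ref{thm-first-eigenfunction}, and yours has the small advantage of not needing the inner product at all for this step. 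You also supply a detail the paper leaves implicit, namely that a higher eigenfunction cannot vanish identically on $B$; your maximum-principle argument for this is fine, though the connectedness it actually uses is that of $G$ (a standing hypothesis), not of $G[\Omega_D]$ --- the latter is what makes Theorem \ref{thm-first-eigenfunction} applicable in the first place. This misattribution is cosmetic and does not affect the validity of the proof.
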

\begin{proof}
If $\lambda_1$ is not of multiplicity one, then by Theorem \ref{thm-first-eigenfunction}, there are two eigenfunctions $f_1|_{B}>0,f_2|_B>0$ of $\lambda_1$ such that $\vv<f_1,f_2>_{B}=0$. This is ridiculous. So, $\lambda_1$ is of multiplicity one. Let $f$ be an eigenfunction for $\lambda_1$ with $f|_B>0$, and $g$ be an eigenfunction for $\lambda_i$ with $i\geq 2$, then $\vv<f,g>_B=0$ implies that $g$ must change signs on $B$. This completes the proof of the corollary.
\end{proof}
The following result shows that the assumption on the connectivity of $G[\Omega_D]$ is necessary for the conclusion of Theorem \ref{thm-first-eigenfunction}.
\begin{prop}\label{prop-lambda-disconnected}
Let $(G,B,B_D,m,w)$ be a weighted connected finite graph with  boundary $B$ and Dirichlet boundary $B_D$. Let $G_1,G_2,\cdots, G_k$ be the connected components of $G[\Omega_D]$, $\wt G_i=G[V(G_i)\cup B_D]$  and $B_i=B\cap V(G_i)$ for $i=1,2,\cdots,k$. Then,
$$\Spec(G,B,B_D)=\sqcup_{i=1}^k\Spec(\wt G_i,B_i,B_D).$$
Here $\Spec(G,B,B_D)$ is the collection of all the Steklov eigenvalues with vanishing Dirichlet boundary data on $(G,B,B_D,m,w)$ counting multiplicities and $\sqcup$ means disjoint union.
\end{prop}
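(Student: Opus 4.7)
The plan is to block-diagonalize the Steklov operator $\Lambda_0$ on $\R^B$ with respect to the orthogonal decomposition
\begin{equation*}
\R^B = \bigoplus_{i=1}^k \R^{B_i},
\end{equation*}
and to identify the $i$-th block with the Steklov operator $\Lambda_0^{(i)}$ of $(\wt G_i, B_i, B_D)$. Once this is done, the spectrum decomposes as the disjoint union $\sqcup_{i=1}^k \Spec(\wt G_i, B_i, B_D)$, counted with multiplicity, and we are finished.

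First I would set up the key structural observation: since $G_1,\ldots,G_k$ are the connected components of $G[\Omega_D]$, there is no edge of $G$ between $V(G_i)$ and $V(G_j)$ for $i\neq j$. Indeed, both endpoints of such an edge would lie in $\Omega_D$, and hence the edge would belong to $G[\Omega_D]$, forcing the endpoints into the same component. Consequently, every neighbor in $G$ of a vertex $v\in V(G_i)$ lies in $V(G_i)\cup B_D = V(\wt G_i)$.

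Next, for $u\in \R^{B_i}$, I would construct the harmonic extension $\zeta_u$ by solving the Dirichlet-Steklov problem on $\wt G_i$ with boundary data $u$ on $B_i$ and $0$ on $B_D$ to obtain $\zeta^{(i)}_u$, and then setting $\zeta_u = \zeta^{(i)}_u$ on $V(\wt G_i)$ and $\zeta_u \equiv 0$ on $V(G)\setminus V(\wt G_i) = \bigcup_{j\neq i}(V(G_j)\setminus B_D)$. Using the structural observation, $\Delta_G \zeta_u(v)=0$ at every $v\in \Omega$: at $v\in V(G_i)\setminus B_i$ the computation agrees with $\Delta_{\wt G_i}\zeta^{(i)}_u(v)=0$, while at $v\in V(G_j)\setminus B_j$ with $j\neq i$ all neighbors of $v$ lie in $V(G_j)\cup B_D$, where $\zeta_u\equiv 0$. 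Hence $\zeta_u$ is indeed the harmonic extension of $u$ (with zero Dirichlet data) in $G$, and the normal derivative $\Lambda_0(u) = \partial \zeta_u/\partial n$ vanishes on $B_j$ for $j\neq i$, while on $B_i$ it agrees with $\Lambda_0^{(i)}(u)$.

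This shows $\Lambda_0$ is block-diagonal with the $i$-th block equal to $\Lambda_0^{(i)}$, so $\Spec(G,B,B_D)=\sqcup_{i=1}^k \Spec(\wt G_i,B_i,B_D)$. The only genuinely non-routine step is the harmonicity check at the ``other'' components; the potential confusion is that the Dirichlet boundary $B_D$ is shared across all $\wt G_i$, but because $\zeta_u$ is pinned to $0$ on $B_D$ and no edges cross between distinct $V(G_j)$'s, the extension-by-zero is consistent and harmonic. Everything else is linear algebra and orthogonal decomposition, so I expect no further obstacle.
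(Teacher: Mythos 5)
Your proposal is correct and takes essentially the same route as the paper: the paper's (one-line) proof observes that the zero extension of any eigenfunction of $(\wt G_i,B_i,B_D)$ is an eigenfunction of $(G,B,B_D)$ with the same eigenvalue, which is exactly the content of your block-diagonalization of $\Lambda_0$ along $\R^B=\bigoplus_{i=1}^k\R^{B_i}$. Your version just makes explicit the two points the paper leaves implicit, namely that no edge of $G$ joins distinct components $V(G_i)$ and $V(G_j)$, and that the dimension count $\sum_i|B_i|=|B|$ ensures the multiplicities add up.
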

\begin{proof}
Note that any eigenfunction of $(\wt G_i,B_i, B_D)$ is an eigenfunction of $(G,B,B_D)$ with the same eigenvalue after zero extension. This proves the conclusion.
\end{proof}
Next, we give the definition of nodal domains for Steklov eigenfunctions.
\begin{defn}
Let $(G,B,m,w)$ be a weighted connected finite graph with boundary and $K(G)$ be its geometric representation. Let  $f\in \R^{V(G)}$ be a Steklov eigenfunction of $(G,B,m,w)$ and $U$ be a connected component of $|K(G)|\setminus \wt f^{-1}(0)$. Then, $U$ and its induced graph $G_U$ are both called a nodal domain of $f$.
\end{defn}
We first confirm some elementary properties of nodal domains.
\begin{prop}\label{prop-Dirichlet-interior-connected}
Let $(G,B,m,w)$ be a weighted connected finite graph with boundary, $f\in \R^{V(G)}$ be a Steklov eigenfunction of $(G,B,m,w)$ and $U$ be a nodal domain of $f$. Then $\Omega_D(G_U)\neq\emptyset$, $G_U[\Omega_D(G_U)]$ is connected, and $B(G_U)\neq\emptyset$.
\end{prop}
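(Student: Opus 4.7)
The plan is to prove the three claims in turn, exploiting that $U$ is a connected open subset of $|K(G)|$ on which $\wt f$ has constant sign. A direct unpacking of the definitions gives $\Omega_D(G_U)=V(G)\cap U$, and the edges of $G_U[\Omega_D(G_U)]$ are exactly those $\{a,b\}\in E(G)$ with $a,b\in V(G)\cap U$: indeed, for $G$-adjacent $a,b\in U$, $\wt f$ is linear and has constant nonzero sign on $[ab]$, so $[ab]\subset U$.

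For $\Omega_D(G_U)\neq\emptyset$, I pick any $p\in U$. If $p\in V(G)$ we are done; otherwise $p$ lies in the interior of some edge $[xy]$ of $K(G)$, and since $\wt f$ is linear on $[xy]$ with $\wt f(p)\neq 0$, one of $f(x),f(y)$ has the same sign as $\wt f(p)$. The sub-segment of $[xy]$ from $p$ to that endpoint avoids $\wt f^{-1}(0)$ and hence lies entirely in $U$, placing that endpoint in $V(G)\cap U$.

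For connectedness of $G_U[\Omega_D(G_U)]$, given $x,y\in V(G)\cap U$, I use local path-connectedness of $|K(G)|$ to obtain a continuous path $\gamma:[0,1]\to U$ from $x$ to $y$. By compactness and a refinement, one can choose $0=s_0<s_1<\cdots<s_n=1$ with $\gamma(s_i)\in V(G)$ for every $i$ and $\gamma([s_i,s_{i+1}])$ contained in a single closed edge of $K(G)$. Then consecutive $\gamma(s_i),\gamma(s_{i+1})$ lie in $V(G)\cap U$ and are $G$-adjacent (or equal), yielding a walk in $G_U[\Omega_D(G_U)]$ from $x$ to $y$.

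For $B(G_U)\neq\emptyset$, I argue by contradiction: assume $V(G)\cap U\subset\Omega(G)$, so $\Delta_G f\equiv 0$ on $V(G)\cap U$. Taking without loss of generality $\wt f>0$ on $U$, let $M=\max_{V(G)\cap U} f>0$, attained at some $x_0$. Writing $\Delta_G f(x_0)=0$ as $\sum_{y\sim x_0}(f(y)-M)w_{x_0y}=0$, I split the neighbors by sign: if $f(y)>0$ then $[x_0y]\subset U$ and $y\in V(G)\cap U$ with $f(y)\leq M$, contributing a non-positive term, while if $f(y)\leq 0$ the term is strictly negative. The sum being zero rules out the latter alternative, so every neighbor of $x_0$ satisfies $f(y)=M$ and lies in $V(G)\cap U$. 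Iterating this maximum-principle step along the connected graph $G$ forces $f\equiv M$ on $V(G)$, hence $\wt f\equiv M>0$ on $|K(G)|$, so $U=|K(G)|$ and $B(G_U)=B(G)\cap U=B(G)\neq\emptyset$ (since a Steklov eigenfunction can only exist when $B(G)$ is nonempty), contradicting the assumption. The main obstacle is this Step 3 maximum-principle argument, which requires carefully linking the sign of $f$ at a neighbor of $x_0$ to membership in the nodal domain $U$.
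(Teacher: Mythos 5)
Your proof is correct, and for the first two claims it follows essentially the paper's route (the paper merely asserts that a vertex of $V(G)\cap U$ exists and that path-connectedness of $U$ gives connectedness of $G_U[\Omega_D(G_U)]$; you supply the edge-sliding and edge-path details). The genuine difference is in the third claim, $B(G_U)\neq\emptyset$. The paper restricts $\wt f$ to the induced graph $G_U$, uses linearity of $\wt f$ on edges to show this restriction is harmonic at every vertex of $\Omega_D(G_U)$ (the identity \eqref{eq-key}), and then invokes uniqueness of the Dirichlet problem with zero data on $B_D(G_U)$ to force the restriction to vanish, a contradiction. You instead run a maximum principle directly on the original graph $G$: at a maximizer $x_0$ of $f$ over $V(G)\cap U$, the harmonicity $\Delta_Gf(x_0)=0$ together with the observation that any neighbour $y$ with $f(y)>0$ automatically lies in $U$ (linearity of $\wt f$ on $[x_0y]$ with both endpoint values positive) forces every neighbour to attain the maximum and remain in $U$; propagating through the connected graph gives $f$ constant, hence $U=|K(G)|$ and $B(G_U)=B(G)\neq\emptyset$. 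Your version is more elementary and never needs the Laplacian of the induced graph $G_U$; the trade-off is that the paper's computation \eqref{eq-key} is not wasted there, since it is reused verbatim to prove the nodal domain theorem (Theorem \ref{thm-nodal-domain}), whereas your argument would still leave that identity to be established separately.
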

\begin{proof}
By definition of $U$, there is a vertex $v\in U$ such that $f(v)\neq 0$. Then $v\in \Omega_D(G_U)$. So $\Omega_D(G_U)\neq\emptyset$. Moreover, because $U$ is path connected,  $G_U[\Omega_D(G_U)]$ is a connected graph.

Finally, if $B(G_U)=\emptyset$, let $g=\wt f|_{V(G_U)}$. Then, for any $x\in V(G_U)\setminus B_D(G_U)$, $x\in \Omega(G)$. Thus $\Delta_G f(x)=0$. Furthermore, because $f(x)\neq0$, for any $y\in V(G)$ adjacent to $x$, either $\{x,y\}\in E(G_U)$ or there is a point $z_y$ in the edge $[xy]$ such that $\wt f(z_y)=0$ and  $\{x,z_y\}\in E(G_U)$ with weight $\frac{1}{|[xz_y]|}$. In the later case, we have
\begin{equation}\label{eq-key}
\frac{g(z_{y})-g(x)}{|[xz_y]|}=\frac{\wt f(z_y)-\wt f(x)}{|[xz_y]|}=\frac{\wt f(y)-\wt f(x)}{|[xy]|}=(f(y)-f(x))w_{xy}.
\end{equation}
Therefore,
\begin{equation}
\begin{split}
\Delta_{G_U}g(x)=\Delta_Gf(x)=0
\end{split}
\end{equation}
for any $x\in V(G_U)\setminus B_D(G_U)$. So $g$ as the solution of the following boundary value problem:
\begin{equation}
\left\{\begin{array}{ll}\Delta_{G_U}g(x)=0& x\in V(G_U)\setminus B_D(G_U)\\
g(x)=0& x\in B_D(G_U),
\end{array}\right.
\end{equation}
must be vanished. This is a contradiction. So $B(G_U)\neq\emptyset$. This completes the proof of the proposition.
\end{proof}
At the end of this section, we extend Friedman's nodal domain theorem in \cite{FR93} for Laplacian eigenfunctions to the case of Steklov eigenfunctions. They are both discrete analogue of Courant's classical result.
\begin{thm}\label{thm-nodal-domain}
Let $(G, B, m,w)$ be a weighted connected finite graph with boundary and $f\in \R^{V(G)}$ be a  Steklov eigenfunction of $G$ with eigenvalue $\sigma>0$. Then, for each nodal domain $U$ of $f$, $\wt f|_{V(G_U)}$ is a Steklov eigenfunction for $G_U$ with vanishing data on $B_D(G_U)$ for the eigenvalues $\lambda_1(G_U)=\sigma$.
\end{thm}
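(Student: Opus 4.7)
The natural candidate for a first Steklov eigenfunction of $(G_U, B(G_U), B_D(G_U))$ with vanishing Dirichlet boundary data is $g := \wt f\big|_{V(G_U)}$. By construction, every vertex of $B_D(G_U) = \p U$ is a zero of $\wt f$, so $g$ automatically satisfies the Dirichlet condition. Moreover, since $U$ is a connected component of $|K(G)| \setminus \wt f^{-1}(0)$, the piecewise linear function $\wt f$ has constant sign on $U$ and is nonzero there, so $g$ has constant sign on $\Omega_D(G_U) = V(G)\cap U$ and is nonzero throughout this set, which contains $B(G_U) = B(G)\cap U$. The plan is to first check that $g$ really is a Steklov eigenfunction for $(G_U, B(G_U), B_D(G_U))$ with eigenvalue $\sigma$, and then invoke the Courant-type results developed earlier in this section to conclude that $\sigma$ must be the first such eigenvalue.

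For the eigenfunction verification, the key computation is to show that $\Delta_{G_U} g(x) = \Delta_G f(x)$ at every $x \in V(G_U)\setminus B_D(G_U) = V(G)\cap U$. Fix such an $x$ and consider its neighbors $y$ in $G$: either $y \in U$, in which case $\{x,y\} \in E(G_U)$ with the same weight $w_{xy}$, or else the open segment $(xy)$ exits $U$ through a unique point $z_y \in \p U$ at which $\wt f(z_y) = 0$, in which case $\{x, z_y\} \in E(G_U)$ with weight $1/|[xz_y]|$. By the linearity identity \eqref{eq-key}, the contribution of such a ``cut'' edge to the $G_U$-Laplacian of $g$ at $x$ equals the contribution of $\{x,y\}$ to the $G$-Laplacian of $f$ at $x$, and summing over all neighbors yields $\Delta_{G_U} g(x) = \Delta_G f(x)$. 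Since $\Omega(G_U) \subset \Omega(G)$ and $B(G_U) \subset B(G)$, this gives $\Delta_{G_U} g(x) = 0$ for $x \in \Omega(G_U)$ and $\frac{\p g}{\p n}(x) = -\Delta_{G_U} g(x) = -\Delta_G f(x) = \frac{\p f}{\p n}(x) = \sigma f(x) = \sigma g(x)$ for $x \in B(G_U)$; hence $g$ is a Steklov eigenfunction of $(G_U, B(G_U), B_D(G_U))$ with vanishing Dirichlet data and eigenvalue $\sigma$.

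To conclude, Proposition \ref{prop-Dirichlet-interior-connected} guarantees that $\Omega_D(G_U) \neq \emptyset$, that $G_U[\Omega_D(G_U)]$ is connected, and that $B(G_U)\neq \emptyset$. Theorem \ref{thm-first-eigenfunction} and Corollary \ref{cor-first-eigenfunction} therefore apply to $G_U$: the eigenvalue $\lambda_1(G_U)$ is simple, its eigenfunctions have constant sign on $\Omega_D(G_U)$, and any eigenfunction for a higher eigenvalue must change sign on $B(G_U)$. Since $g|_{B(G_U)}$ is nonzero and of constant sign, $g$ cannot be a higher eigenfunction, so $\sigma = \lambda_1(G_U)$. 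The one point I expect to need care with when writing this out is the bookkeeping between edges of $G$ that lie entirely in $\bar U$ and those that cross $\p U$; this is exactly what the identity \eqref{eq-key} is designed to handle, and once it is in hand the rest is essentially an application of the positivity theory already established.
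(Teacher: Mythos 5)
Your proposal is correct and follows essentially the same route as the paper's own proof: verify via the identity \eqref{eq-key} that $\wt f|_{V(G_U)}$ is a Steklov eigenfunction of $G_U$ with vanishing Dirichlet data and eigenvalue $\sigma$, note that it has constant sign, and then invoke Proposition \ref{prop-Dirichlet-interior-connected} together with Corollary \ref{cor-first-eigenfunction} to rule out higher eigenvalues. You merely spell out in more detail the Laplacian bookkeeping for edges crossing $\p U$, which the paper delegates to the argument in Proposition \ref{prop-Dirichlet-interior-connected}.
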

\begin{proof} By the same argument as in the proof of Proposition \ref{prop-Dirichlet-interior-connected} using \eqref{eq-key}, we know that  $\wt f|_{V(G_U)}$ is a Steklov eigenfunction for $G_U$ with vanishing Dirichlet boundary data on $B_D(G_U)$ with respect to the eigenvalues $\sigma$. Moreover $\wt f|_{\Omega_D(G_U)}$ does not change signs because $U$ is a nodal domain.  Then, by Corollary \ref{cor-first-eigenfunction} and Proposition \ref{prop-Dirichlet-interior-connected}, we know that $\wt f|_{V(G_U)}$ is an eigenfunction for $\lambda_1(G_U)$.
\end{proof}
\section{A lower bound for the first Steklov eigenvalue with vanishing Dirichlet boundary data}\label{sec-min-lambda}
In this section, we first obtain a crucial lower bound for the first Steklov eigenvalues with vanishing Dirichlet boundary data on trees extending Lemma 3.1 in \cite{FR96} to the case of Steklov eigenvalues.  Then, as an application of the lower bound, we give a more explicit sufficient condition for the rigidity of the isodiametric estimates by He-Hua \cite{He-Hua1} and its extension by the authors \cite{YY}.

\begin{thm}\label{thm-lambda-1}
Let $(G,B,B_D,m,w)$ be a weighted finite tree with boundary $B\neq\emptyset$ and Dirichlet boundary $B_D\neq\emptyset$ such that $L(G)=B\cup B_D$, $G[\Omega_D]$ is a tree with unit weight and $|\Omega_D|=n+1$ with $n\geq 1$. Then,
\begin{equation}
\lambda_1(G,B,B_D)\geq \Lambda(l,n).
\end{equation}
The equality of the inequality holds if and only if $G[\Omega]$ is a path of length $I(l,n)$ and all the Dirichlet boundary vertices are adjacent to the same end vertex of the path and the other boundary vertices are adjacent to the other end vertex of the path. Here
\begin{equation*}
I(l,n)=\left\{\begin{array}{ll}0&l\geq n\\
\lfloor\frac{n-l}{2}\rfloor&l<n,\mbox{ and }\{\frac{n-l}{2}\}<\frac12\\
\lceil\frac{n-l}{2}\rceil&l<n,\mbox{ and }\{\frac{n-l}{2}\}>\frac12\\
\lfloor\frac{n-l}{2}\rfloor\mbox{ or }
\lceil\frac{n-l}{2}\rceil&l<n,\mbox{ and }\{\frac{n-l}{2}\}=\frac12.
\end{array}\right.
\end{equation*}
In particular, when $|B_D|=1$, the equality holds if and only if $G$ is a minimal broom $\Br(l,n)$. Here
$$l=\frac{1}{\sum_{x\in B_D,y\in \Omega_D}w_{xy}}.$$
\end{thm}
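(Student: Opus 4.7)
The plan is to follow the strategy of Friedman \cite{FR96} for Laplacian eigenvalues, adapted to the Steklov setting: I would reduce any qualifying tree $G$ to a broom through rearrangements that do not increase $\lambda_1$, and then apply Lemma \ref{lem-broom-l-n}. The first step exploits the Rayleigh characterization
$$\lambda_1(G,B,B_D)=\min_{f|_{B_D}=0,\ f|_B\not\equiv 0}\frac{\sum_{e}w_e(df)^2}{\sum_{x\in B}m_xf(x)^2},$$
in which the Dirichlet vertices enter only through $\sum_{\{o,y\}\in E(B_D,\Omega_D)}w_{oy}f(y)^2$. This lets us identify all vertices of $B_D$ into a single virtual Dirichlet vertex without altering $\lambda_1$, while preserving both the invariant $l$ and the interior tree $G[\Omega_D]$.

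Next I would proceed by induction on $n=|\Omega_D|-1$; the base case follows directly from Lemma \ref{lem-lambda1-broom}. For the inductive step the key reduction is a \emph{straightening lemma}: if $G[\Omega_D]$ has a vertex of degree $\geq 3$, then there is a modified tree $G'$ with the same $n$ and $l$ but one fewer branching, satisfying $\lambda_1(G')\leq\lambda_1(G)$. I would prove this using the first eigenfunction $f$ (positive on $\Omega_D$ by Theorem \ref{thm-first-eigenfunction}) to construct an explicit test function on $G'$; the comparison reduces to showing that re-parameterizing two branches into a single longer chain can only decrease the edge-energy while preserving the boundary norm. A simpler rearrangement then pushes boundary vertices of $B$ to one endpoint of the resulting path and the Dirichlet vertices to the opposite endpoint. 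After these reductions the graph is a broom $\Br(l,i,d)$ with $i+d=n$, and Lemma \ref{lem-broom-l-n} yields $\lambda_1(G)\geq\Lambda(l,n)$ with equality iff $(i,d)$ matches the optimal choice described there.

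The main obstacle is the straightening lemma. The delicate point is that boundary vertices of $B$ may be attached along the interior of the branches (not only at their leaves), so the comparison function on $G'$ must be tailored to match both the Dirichlet energy and the boundary $L^2$ norm simultaneously. I plan to handle this by first performing an auxiliary rearrangement pushing boundary and Dirichlet attachments toward the leaves of $G[\Omega_D]$, and only then applying the straightening. For the equality characterization, I would trace through the equality conditions at each rearrangement step: equality forces $G[\Omega]$ to be a path of length $I(l,n)$ with all Dirichlet vertices at one endpoint and all boundary vertices of $B$ at the other; and when $|B_D|=1$ this pins $G$ down exactly as the minimal broom $\Br(l,n)$.
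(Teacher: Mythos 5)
Your overall strategy --- reduce an arbitrary admissible tree to a broom by rearrangements that do not increase $\lambda_1$, then invoke Lemma \ref{lem-broom-l-n} --- is exactly the route the paper takes (the paper phrases it as an extremal argument: a minimizer $G_0$ over the finite set of admissible trees exists, and any deviation from the broom shape admits a rearrangement that \emph{strictly} decreases the Rayleigh quotient, a contradiction). However, your sketch omits the one ingredient that makes every such comparison work: the first eigenfunction $f$, normalized positive on $\Omega_D$ and viewed on the tree rooted at the Dirichlet vertex $o$, is \emph{strictly increasing} from the root toward the leaves. Positivity alone (Theorem \ref{thm-first-eigenfunction}) is not enough. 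The paper proves weak monotonicity by a shift argument (if a child's value were smaller than its parent's, adding a constant to the whole subtree below the child keeps the energy and strictly increases the boundary norm) and upgrades it to strict monotonicity using harmonicity at interior vertices and the eigenvalue equation at boundary vertices. With this in hand the straightening step is immediate and needs no tailored test function: if $x$ has two children $y,z$ in $\Omega$ with $f(y)\geq f(z)$, delete $\{x,y\}$ and add $\{y,z\}$; the boundary norm is unchanged and the energy drops by $(f(y)-f(x))^2-(f(y)-f(z))^2>0$ precisely because $f(x)<f(z)\leq f(y)$. Without that ordering, your claim that merging two branches ``can only decrease the edge-energy'' is unjustified.

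Two further points need repair. First, identifying all of $B_D$ into one virtual vertex preserves $\lambda_1$ and $l$, but it leaves that vertex with several edges attached at \emph{different} interior vertices, which is not yet a broom; the paper closes this by moving all Dirichlet edges to the interior vertex $v$ where $f$ is minimal (admissible since $\sum_i w_i f(y_i)^2\geq\bigl(\sum_i w_i\bigr)f(v)^2$) and only then merging them into a single edge of weight $1/l$. Second, your inductive monovariant ``one fewer branching'' is not delivered by the natural rearrangement: reattaching $y$ to $z$ lowers $\deg(x)$ but raises $\deg(z)$, so the number of branch vertices need not drop. The paper sidesteps this by working with a global minimizer, where a single strictly decreasing rearrangement already gives a contradiction; you should either adopt that framing or exhibit a genuinely decreasing invariant. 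Finally, the rearrangement that relocates a boundary leaf $x$ from an interior vertex $y$ to the far end $v$ of the path, with the modified value $f(x)-f(y)+f(v)$, is the step that actually requires a tailored test function, and it is also what drives the equality characterization.
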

\begin{proof}
We first consider the case  $|B_D|=1$. Let $f\in \R^{V(G)}$ be an eigenfunction for $\lambda_1$. Then, by Theorem \ref{thm-first-eigenfunction}, we can assume that $f>0$ on $\Omega_D$.

 Let $B_D=\{o\}$ and  take $o$ as the root of the tree $G$. we first claim that \emph{ $f$ is increasing along the tree. More precisely, if $x$ is the parent of $y$, then $f(x)\leq f(y)$.} Indeed, if this is not true, i.e.  $f(x)>f(y)$, let
\begin{equation}
g(z)=\left\{\begin{array}{ll}f(z)+2(f(x)-f(y))&\mbox{$z=y$ or $z$ is a descendant of $y$ }\\f(z)&\mbox{otherwise}
\end{array}\right.
\end{equation}
Then,
\begin{equation}
\vv<dg,dg>_G=\vv<df,df>_G
\end{equation}
and
\begin{equation}
\vv<g,g>_{B}>\vv<f,f>_{B}
\end{equation}
and $g(o)=0$. So,
$$\frac{\vv<dg,dg>_G}{\vv<g,g>_B}<\frac{\vv<df,df>_G}{\vv<f,f>_B}$$
which contradicts that $f$ is the eigenfunction for $\lambda_1$.

Next, we claim that \emph{$f$ is strictly increasing.} In fact, when $x$ is the parent of $y\in B$,
\begin{equation}
\begin{split}
0<\lambda_1f(y)=\frac{\p f}{\p n}(y)=f(y)-f(x).
\end{split}
\end{equation}
So, $f(x)<f(y)$. Therefore, if the claim is not true, then there are three vertices $x,y,z$ such that $x$ is the parent of $y$ and $y$ is the  parent of $z$ with $f(x)=f(y)$ and $f(y)<f(z)$.  Note that $y\in\Omega$ since $y$ is not a leaf. Then
\begin{equation*}
\begin{split}
0=&\Delta_Gf(y)\\
=&(f(x)-f(y))+(f(z)-f(y))+\sum_{v\neq x,z}(f(v)-f(y))w_{vy}\\
\geq& f(z)-f(y)>0
\end{split}
\end{equation*}
since $f(v)\geq f(y)$ by that $f$ is increasing. This is a contradiction.

Let $G_0$ be a tree satisfying the assumption in the statement of the theorem achieving the minimum of $\lambda_1$ when $|\Omega_D|$  and the weight of the Dirichlet boundary edge are fixed ($G_0$ must exist because the number of such kind of trees is finite.). Let $f_0$ be the eigenfunction for $\lambda_1$ with $f_0>0$ on $\Omega_D(G_0)$. By the claims before, we know that $f_0$ is strictly increasing when taking the Dirichlet boundary vertex $o$ as the root.

We first claim that \emph{$G_0[\Omega(G_0)]$ must be a path and $o$ is adjacent to one of the end vertices of the path.} Otherwise, there are $x,y,z\in \Omega(G_0)$ such that $x$ is the parent of $y$ and $z$. Suppose that  $f_0(y)\geq f_0(z)$. Let $G'$ be the tree obtained by removing the edge $\{x,y\}$ and adding a new edge $\{y,z\}$ on $G$. Then,
\begin{equation*}
\frac{\vv<df_0,df_0>_{G'}}{\vv<f_0,f_0>_{B(G')}}=\frac{\vv<df_0,df_0>_{G_0}-(f_0(y)-f_0(x))^2+(f_0(y)-f_0(z))^2}{\vv<f_0,f_0>_{B(G_0)}}<\frac{\vv<df_0,df_0>_{G_0}}{\vv<f_0,f_0>_{B(G_0)}}.
\end{equation*}
since $0<f_0(x)<f_0(z)\leq f_0(y)$. This contradicts that $G_0$ achieves the minimum of $\lambda_1$.

We next claim that \emph{all the leaves in $B(G_0)$ must be adjacent to the other end vertex of the path $G_0[\Omega(G_0)]$ (i.e. the end vertex of $G_0[\Omega(G_0)]$ not adjacent to $o$ when $G_0[\Omega(G_0)]$ is a nontrivial path).} Otherwise, let $v$ be the other end vertex of $G_0[\Omega(G_0)]$, and suppose there is a leave $x\in B(G_0)$ adjacent to some vertex $y\in \Omega(G_0)\setminus \{v\}$. By that $f_0$ is strictly increasing, we know that
\begin{equation}
f_0(x)>f_0(y)\ \mbox{and}\ f_0(v)>f_0(y).
\end{equation}
Let $G'$ be the graph formed by removing the edge $\{x,y\}$ from $G_0$ and adding the edge $\{x,v\}$ and $f$ be such that
$$f(x)=f_0(x)-f_0(y)+f_0(v)>f_0(x)$$
and $f(z)=f_0(z)$ for $z\neq x$. Then,
\begin{equation}
\frac{\vv<df,df>_{G'}}{\vv<f,f>_{B(G')}}<\frac{\vv<df_0,df_0>_{G}}{\vv<f_0,f_0>_{B(G_0)}}=\lambda_1(G_0).
\end{equation}
This contradicts that $G_0$ achieves the minimum of $\lambda_1$.

Thus, $G_0$ must be a minimal broom $\Br(l,n)$. This completes the proof of the theorem when $|B_D|=1$.

When $|B_D|\geq 2$, let $G_0$ be the tree achieving the minimal $\lambda_1$ among all trees satisfying the assumptions in the statement of the theorem and with $|\Omega_D|$, $|B_D|$ and the weights of Dirichlet boundary edges are fixed. Let $f_0$ be a first Dirichlet eigenfunction such that $f_0|_{\Omega_D(G_0)}>0$. Let $v\in \Omega(G_0)$ with $$f(v)=\min_{x\in\Omega(G_0)} f(x).$$ Let $G'$ be the tree by moving all the Dirichlet boundary edges of $G_0$ to $v$. Then,
\begin{equation}
\frac{\vv<df_0,df_0>_{G'}}{\vv<f_0,f_0>_{B(G')}}\leq \frac{\vv<df_0,df_0>_{G}}{\vv<f_0,f_0>_{B(G_0)}}=\lambda_1(G_0).
\end{equation}
So, $G'$ is also a tree achieving the minimum of $\lambda_1$. Let $G''$ be the tree by removing all the the Dirichlet boundary edges of $G'$ and adding a Dirichlet boundary edge to $v$ with the weight equal to the total weight of all the removed Dirichlet boundary edges. Then, it is clear that $\lambda_1(G'')=\lambda_1(G')$. Now, we have reduced to the case that $|B_D|=1$ and complete the proof of the theorem.
\end{proof}

As an application of Theorem \ref{thm-lambda-1}, we can give a more explicit sufficient condition for the rigidity of the isodiametric estimate in \cite{He-Hua1} and its extension in \cite{YY}. First recall the estimate:
\begin{thm}[Theorem 5.1 in \cite{He-Hua2} \& Theorem 4.1 in \cite{YY}]\label{thm-reg-star}
For $r\geq 2$ and $l\geq 1$, let $\wt G$ be a finite tree containing $\St(r;l)$ as a subtree. Then,
\begin{equation}\label{eq-sigma-i-regular-star}
\sigma_i(\wt G)\leq\frac1l
\end{equation}
for $i=2,3,\cdots, r$. Moreover, the equality holds for $i=2$ if and only if $\wt G=\St(r;l)\vee_o\wt G_o$ with $\lambda_1\left(\wt G_o, L(\wt G_o)\setminus \{o\}, \{o\}\right)\geq \frac{1}{l}$. Here $o$ is the center of the star $\St(r;l)$, and $\St(r;l)\vee_o\wt G_o$ means the wedge-sum of $\St(r;l)$ and $\wt G_o$ at $o$.
\end{thm}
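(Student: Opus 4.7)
The proof has three parts: the upper bound $\sigma_i(\wt G)\leq 1/l$ for $i=2,\ldots,r$; the ``$\Leftarrow$'' direction of the $i=2$ rigidity; and the ``$\Rightarrow$'' direction, which is the main obstacle.

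For the upper bound, I plan to apply the min--max principle to an explicit family of test functions. Let $o$ be the center of the embedded $\St(r;l)$ and $o=v_0^{(k)},v_1^{(k)},\ldots,v_l^{(k)}$ the vertices of arm $k$. Define $f_k:V(\wt G)\to\R$ by $f_k(v_j^{(k)})=j/l$ on arm $k$, $f_k\equiv 0$ on the other arms and at $o$, and $f_k(v)=f_k(x_v)$ for $v\in V(\wt G)\setminus V(\St(r;l))$, where $x_v$ is the unique vertex of the star on the tree-path from $v$ to $o$ (so $f_k$ is locally constant on each side-branch hanging off the star). Then $df_k$ is supported only on the edges of arm $k$, giving $\vv<df_k,df_k>_{\wt G}=1/l$, while $\vv<f_k|_B,f_k|_B>_B\geq 1$ because some leaf of $\wt G$ in the far-end subtree from $v_l^{(k)}$ carries the value $1$. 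The supports of $f_1,\ldots,f_r$ meet only at $o$, where they all vanish, so their Dirichlet forms and boundary $L^2$ Gram matrices are jointly diagonal. Applying min--max to the $i$-dimensional subspace spanned by $f_1|_B,\ldots,f_i|_B$ yields $\sigma_i(\wt G)\leq 1/l$.

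For the ``$\Leftarrow$'' direction, assume $\wt G=\St(r;l)\vee_o\wt G_o$ with $\lambda_1(\wt G_o,L(\wt G_o)\setminus\{o\},\{o\})\geq 1/l$. I show that every $f\in\R^{V(\wt G)}$ orthogonal to constants on $B(\wt G)$ satisfies $l\vv<df,df>_{\wt G}\geq\vv<f,f>_{B(\wt G)}$. Writing $a=f(o)$ and $c_k=f(v_l^{(k)})$, arm-by-arm Cauchy--Schwarz gives $\sum_{j=0}^{l-1}(f(v_{j+1}^{(k)})-f(v_j^{(k)}))^2\geq(c_k-a)^2/l$, and the Rayleigh characterization of $\lambda_1(\wt G_o,\ldots)$ applied to $\psi=f|_{V(\wt G_o)}-a$ (which vanishes at $o$) gives $l\vv<df,df>_{\wt G_o}\geq\sum_{u\in L(\wt G_o)\setminus\{o\}}(f(u)-a)^2$. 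Summing, expanding the squares, using $\sum_k c_k+\sum_u f(u)=0$ to kill the cross terms, and discarding the nonnegative term $a^2(r+N_o)$ (with $N_o=|L(\wt G_o)\setminus\{o\}|$) produces the claimed inequality, hence $\sigma_2(\wt G)\geq 1/l$.

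For the ``$\Rightarrow$'' direction, assume $\sigma_2(\wt G)=1/l$. The $\lambda_1$-inequality on $\wt G_o$ (once the structural decomposition is established) follows by contrapositive of the ``$\Leftarrow$'' argument: if $\lambda_1<1/l$, one lifts the $\lambda_1$-eigenfunction of $\wt G_o$ to $\wt G$ with a constant shift chosen so the lift is orthogonal to constants on $B(\wt G)$, and the resulting Rayleigh quotient is strictly less than $1/l$, contradicting $\sigma_2=1/l$. The harder task is the structural claim that $\wt G$ has no subtree attached at an arm vertex other than $o$. My plan here is a nodal-domain argument: take a $\sigma_2$-eigenfunction $f$ and apply Theorem \ref{thm-nodal-domain} to conclude that each nodal domain $U$ satisfies $\lambda_1(G_U)=1/l$. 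The minimal-broom characterization of Theorem \ref{thm-lambda-1} severely constrains each $G_U$, and the main obstacle is to show that the presence of a side-branch at some $v_j^{(k)}$ with $1\leq j\leq l$ forces at least one nodal domain whose $G_U$ fails to be a minimal broom, so that $\lambda_1(G_U)>1/l$ and a contradiction ensues; the delicate case is when the side-branch is small and the corresponding $G_U$ is geometrically ``close to'' a minimal broom.
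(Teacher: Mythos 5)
Your upper bound (the $r$ disjointly supported ramp functions plus min--max) and your ``$\Leftarrow$'' computation are both correct; note, though, that the paper itself gives no proof of this theorem --- it is recalled from \cite{He-Hua1} and \cite{YY} right after the phrase ``First recall the estimate,'' so there is nothing to compare against and your argument must stand on its own. It does not, for two reasons, both in the ``$\Rightarrow$'' direction. First, the contrapositive argument for $\lambda_1\bigl(\wt G_o,L(\wt G_o)\setminus\{o\},\{o\}\bigr)\geq\frac1l$ fails as described: extending the $\lambda_1$-eigenfunction $\phi$ by zero on the arms and subtracting a constant $c$ to restore $\vv<\cdot,1>_{B}=0$ leaves the Dirichlet energy unchanged but \emph{shrinks} the boundary norm from $\vv<\phi,\phi>$ to $\vv<\phi,\phi>-c^2|B|$, so the Rayleigh quotient goes up, not down, and need not stay below $\frac1l$. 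Concretely, for $r=2$, $l=3$ and $\wt G_o$ a path of length $4$ rooted at $o$ one has $\lambda_1=\frac14<\frac13$, yet the shifted lift has quotient $\frac{1/4}{1-1/3}=\frac38>\frac13$. The repair is to ramp rather than shift: set $f=\phi$ on $\wt G_o$ and $f(v_j^{(k)})=-\frac{jS}{rl}$ on the arms, where $S=\vv<\phi,1>_{L(\wt G_o)\setminus\{o\}}$; then $\vv<f,1>_{B}=0$, $l\vv<df,df>_{\wt G}=\frac{S^2}{r}+l\lambda_1\vv<\phi,\phi>$ and $\vv<f,f>_{B}=\frac{S^2}{r}+\vv<\phi,\phi>$, which certifies $\sigma_2(\wt G)<\frac1l$ whenever $\lambda_1<\frac1l$.

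Second, the structural half of the rigidity --- that equality at $i=2$ forces every vertex of $\wt G$ outside the star to be attached through $o$ --- is the substantive content of the ``only if'' direction, and your proposal stops at a plan for it: you name ``the main obstacle'' and ``the delicate case'' without resolving them. The difficulty is real. A nodal domain $G_U$ of a $\sigma_2$-eigenfunction is cut out by the zero set of the piecewise-linear extension, so its Dirichlet edge length and vertex count are not controlled a priori; Theorem \ref{thm-lambda-1} then only tells you $\lambda_1(G_U)\geq\Lambda(l',n')$ for parameters you do not yet know, and a small side-branch at an interior arm vertex $v_j^{(k)}$ can sit inside a nodal domain whose parameters still permit $\Lambda(l',n')\leq\frac1l$. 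To close this you need an additional quantitative step --- for instance a direct perturbation of the ramp test functions showing that any edge of $\wt G\setminus\St(r;l)$ incident to $v_j^{(k)}$ with $j\geq1$ strictly decreases the optimal Rayleigh quotient, or a careful bookkeeping of $\Length$ of the clump containing the branch against the broom classification in Lemma \ref{lem-broom-l-n}. Until that is supplied, the characterization of equality is not established.
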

 Now, by Theorem \ref{thm-lambda-1} and Proposition \ref{prop-lambda-disconnected}. we have the following conclusion.
\begin{cor}
Let $r\geq 2$ and $l\geq 1$ and let $\wt G=\St(r;l)\vee_o\wt G_o$ be such that each branch of $\wt G_o$ with respect to $o$ has no more than $\left\lfloor \sqrt{4(l-1)+1} \right\rfloor$ edges when taking $o$ as the root of $\wt G_o$. Then, $\sigma_2(\wt G)=\frac{1}{l}$.
\end{cor}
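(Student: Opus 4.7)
My strategy is to combine Theorem \ref{thm-reg-star} with Proposition \ref{prop-lambda-disconnected} and Theorem \ref{thm-lambda-1}. By Theorem \ref{thm-reg-star} (applied with $i=2$, which requires $r\geq 2$), one already has $\sigma_2(\wt G) \leq 1/l$, and the equality is characterized by
$$\lambda_1\!\left(\wt G_o, L(\wt G_o)\setminus\{o\}, \{o\}\right) \geq \frac{1}{l}.$$
So the entire plan is to verify this Dirichlet Steklov lower bound on $\wt G_o$.

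To do so, I would first decompose $\wt G_o$ at $o$. With $B_D = \{o\}$, the Dirichlet interior $\Omega_D = V(\wt G_o)\setminus\{o\}$ has as its induced subgraph exactly the disjoint union of the branches $T_1,\ldots,T_s$ of $\wt G_o$ at $o$. By Proposition \ref{prop-lambda-disconnected},
$$\lambda_1\!\left(\wt G_o, L(\wt G_o)\setminus\{o\}, \{o\}\right) = \min_{1\leq j\leq s} \lambda_1\!\left(\wt T_j, L(T_j), \{o\}\right),$$
where $\wt T_j$ denotes $T_j$ together with $o$ and the edge joining $o$ to the root of $T_j$. Each $\wt T_j$ satisfies the hypotheses of Theorem \ref{thm-lambda-1}: it is a tree whose leaf set is $L(T_j)\cup\{o\} = B\cup B_D$, whose Dirichlet interior is the subtree $T_j$ with unit weights inherited from the combinatorial graph $\wt G$, and whose single Dirichlet edge has weight $1$ (so $l=1$ in the notation of that theorem). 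Setting $n_j = |V(T_j)|-1 = |E(T_j)|$, Theorem \ref{thm-lambda-1} gives $\lambda_1(\wt T_j) \geq \Lambda(1, n_j)$ whenever $n_j \geq 1$; the degenerate case $n_j = 0$ (where $T_j$ is a single pendant vertex at $o$) yields $\lambda_1(\wt T_j) = 1 \geq 1/l$ by a direct calculation, so it is trivial.

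Finally, I would plug in the explicit formula $\Lambda(1, n) = \Lambda(n+1) = 1/(1 + \lfloor (n+1)^2/4\rfloor)$, available from Lemma \ref{lem-broom-l} via $\Br(1, n) = \Br(n+1)$, and compare with the hypothesis. Reading the branch's ``edges'' as including the edge attaching it to $o$, the hypothesis translates to $n_j + 1 \leq k := \lfloor\sqrt{4(l-1)+1}\rfloor$, so
$$\lfloor (n_j+1)^2/4 \rfloor \leq \lfloor k^2/4\rfloor \leq \lfloor (4(l-1)+1)/4\rfloor = l-1,$$
whence $\Lambda(1, n_j) \geq 1/l$. Taking the minimum over $j$ gives $\lambda_1(\wt G_o,\ldots) \geq 1/l$, and feeding this back into Theorem \ref{thm-reg-star} upgrades $\sigma_2(\wt G)\leq 1/l$ to the desired equality $\sigma_2(\wt G) = 1/l$. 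The only real obstacle is careful bookkeeping: the symbol $l$ plays two different roles (the arm length of $\St(r;l)$ on the one hand, and the Dirichlet edge length in Theorem \ref{thm-lambda-1} on the other, which here specializes to $1$), and one must be precise about whether the ``edges of a branch'' include the connecting edge to $o$. No genuinely new ideas beyond the results already established in the paper are needed.
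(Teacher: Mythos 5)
Your proposal is correct and follows essentially the same route as the paper: reduce to the bound $\lambda_1\bigl(\wt G_o, L(\wt G_o)\setminus\{o\},\{o\}\bigr)\geq \frac1l$ via Theorem \ref{thm-reg-star}, split $\wt G_o$ into its branches at $o$ using Proposition \ref{prop-lambda-disconnected}, and bound each branch below by $\Lambda(|E(\wt G_i)|)=\frac{1}{1+\lfloor |E(\wt G_i)|^2/4\rfloor}\geq\frac1l$ via Theorem \ref{thm-lambda-1}. Your version is only more explicit about the identification $\Lambda(1,n_j)=\Lambda(n_j+1)$ and the degenerate single-edge branch, which the paper glosses over.
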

\begin{proof}
Let $\wt G_1,\cdots,\wt G_k$ be the branches of $\wt G_o$. Then, by Theorem \ref{thm-lambda-1},
\begin{equation}
\lambda_1(\wt G_i,L(\wt G_i)\setminus\{o\},\{o\})\geq\Lambda(|E(\wt G_i)|)=\frac{1}{1+\left\lfloor\frac{|E(\wt G_i)|^2}{4}\right\rfloor}\geq\frac1l.
\end{equation}
So, by Proposition \ref{prop-lambda-disconnected},
\begin{equation}
\lambda_1(\wt G_o,L(\wt G_o)\setminus\{o\},\{o\})\geq \frac1l.
\end{equation}
This completes the proof of the theorem.
\end{proof}
\section{Clump numbers of trees}\label{sec-clump}
In this section, we recall the definition of the clump number of a tree in \cite{FR96} and introduce some of its properties.
\begin{defn}
Let $G$ be a finite combinatorial tree and  $K(G)$ be its geometric representation. For any  $p\in |K(G)|$, the connected components $U_1, U_2,\cdots, U_k$ (or their corresponding induced trees $G_{U_1},G_{U_2}, \cdots, G_{U_k}$) of $|K(G)|\setminus\{p\}$ are called the clumps of $G$ with respect to $p$. The clump number of $G$ with respect to $p$ is defined as
\begin{equation*}
\Clump(G,p)=\max\{\Length(U_1),\Length(U_2),\cdots, \Length(U_k)\}
\end{equation*}
where $\Length (U_i)$ is the total length of $U_i$. Moreover, the clump number of $G$ is defined to be
\begin{equation}
\Clump(G)=\inf_{p\in |K(G)|}\Clump(G,p).
\end{equation}
\end{defn}
We have the following properties for $\Clump(G,p)$.
\begin{prop}\label{prop-clump}
Let $G$ be a finite combinatorial tree and $K(G)$ be its geometric representation. Then,
\begin{enumerate}
\item $\Clump(G,p)$ is a lower semi-continuous function for $p\in |K(G)|$. Thus there is a point $p\in |K(G)|$ such that $\Clump(G)=\Clump(G,p)$;
\item the point $p\in |K(G)|$ such that $\Clump(G)=\Clump(G,p)$ is either a vertex or a mid-point of an edge. Thus, $\Clump(G)$ is either an integer or a half-integer.
\item if $p\in |K(G)|$ is a mid-point of some edge such that $\Clump(G)=\Clump(G,p)$, then the clumps of $G$ with respect to $p$ are two clumps of equal total length $\frac{|E(G)|}{2}$;
\item there is unique point $p\in |K(G)|$ such that $\Clump(G)=\Clump(G,p)$. We called the  point $p$ the equilibrium point of $G$.
\end{enumerate}
\end{prop}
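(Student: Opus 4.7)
The plan is to establish (1)--(4) in order, using compactness, a one-edge calculation, and a tree-theoretic interpolation argument on the compact geometric realization $|K(G)|$. For (1), the function $\Clump(G,\cdot)$ is visibly continuous on the relative interior of each edge, so the content is lower semi-continuity at a vertex $x$. Approaching $x$ along an incident edge $\{x,y\}$ and parametrizing by $t=|xp|\in(0,1)$, the two clumps at $p$ have lengths $t+A$ and $(1-t)+B$, where $A$ is the total length of the clumps of $x$ not containing $y$ and $B$ is the analogous quantity at $y$; these satisfy $A+B=|E(G)|-1$. Letting $t\to 0^+$ gives $\Clump(G,p)\to\max(A,\,|E(G)|-A)$; since $|E(G)|-A$ equals the length of the clump of $x$ containing $y$ and $A$ dominates the length of every other clump of $x$, this limit is at least $\Clump(G,x)$. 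Lower semi-continuity together with compactness of $|K(G)|$ produces a minimizer.

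For (2) and (3), the restriction of $\Clump(G,\cdot)$ to the interior of a single edge $\{x,y\}$ equals $\max(t+A,\,(1-t)+B)$, the maximum of a strictly increasing and a strictly decreasing affine function. Such a maximum attains its minimum in the open interval $(0,1)$ only where the two pieces coincide, at $t_0=(1+B-A)/2$; requiring $t_0\in(0,1)$ forces $|A-B|<1$, and since $A,B\in\mathbb Z$ this means $A=B$, so $t_0=1/2$ with common value $|E(G)|/2$. Hence every equilibrium point is either a vertex (giving $\Clump(G)\in\mathbb Z$, equal to the length of the largest clump) or the midpoint of an edge for which the two sides carry equally many edges (giving $\Clump(G)=|E(G)|/2$ and two clumps of equal length). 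This yields both (2) and (3).

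For (4), suppose for contradiction that $p_1\neq p_2$ are two equilibrium points with common value $c$. Because $|K(G)|$ is a (topological) tree, $p_1$ and $p_2$ are joined by a unique geodesic of positive length, so one can choose an interior edge-point $q$ strictly between them. Since $q$ is an interior edge-point, removing $q$ produces exactly two clumps $W_1\ni p_1$ and $W_2\ni p_2$ with $\Length(W_1)+\Length(W_2)=|E(G)|$. Letting $U_0^{(i)}$ be the clump of $p_i$ that contains $q$, a direct bookkeeping shows that $W_i$ decomposes as every clump of $p_i$ other than $U_0^{(i)}$ together with the open segment from $p_i$ to $q$ and any side-branches off this path; hence
\[
\Length(W_i)\geq|E(G)|-\Length\bigl(U_0^{(i)}\bigr)+|p_iq|\geq|E(G)|-c+|p_iq|.
\]
Combined with $\Length(W_1)+\Length(W_2)=|E(G)|$ this forces $\Length(W_j)\leq c-|p_{3-j}q|$, and therefore
\[
\Clump(G,q)=\max\bigl(\Length(W_1),\Length(W_2)\bigr)\leq c-\min\bigl(|p_1q|,|p_2q|\bigr)<c,
\]
contradicting $c=\Clump(G)$.

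The main obstacle is the bookkeeping behind the inequality $\Length(W_i)\geq|E(G)|-\Length(U_0^{(i)})+|p_iq|$: it has to be verified uniformly whether $p_i$ is a vertex or an interior edge-point, and one must confirm that an interior-edge $q$ strictly between $p_1$ and $p_2$ on the geodesic always exists, which it does because the geodesic has positive length and is covered by closed edge-segments. Once this is in place, the proposition reduces to elementary max/min manipulations on the one-dimensional geodesic space $|K(G)|$.
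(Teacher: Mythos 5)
Your argument is correct. For parts (1)--(3) it follows essentially the same route as the paper: lower semi-continuity is checked at a vertex by comparing the limiting clump lengths along an incident edge, and the restriction of $\Clump(G,\cdot)$ to an open edge is the maximum of the two affine functions $t+A$ and $(1-t)+B$, whose interior minimizer forces $A=B$ and $t_0=\tfrac12$ (the paper phrases this as $n_x=n_y$ and derives it by perturbing to the midpoint of $[xp]$, but the content is identical, and it yields (2) and (3) in the same way). Where you genuinely diverge is part (4). The paper first argues that two minimizers must be simultaneously vertices or simultaneously edge-midpoints, then in the vertex case uses the strict nesting of clumps ($U_i\subsetneqq W_1$ for $i\geq 2$, $W_j\subsetneqq U_1$ for $j\geq2$) plus an auxiliary midpoint $z$ on the first edge of the geodesic to produce a point with strictly smaller clump number, and in the midpoint case appeals to (3). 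Your argument replaces this case analysis with a single quantitative estimate: for any interior edge-point $q$ strictly between two minimizers, $\Length(W_i)\geq |E(G)|-\Length(U_0^{(i)})+|p_iq|\geq |E(G)|-c+|p_iq|$ together with $\Length(W_1)+\Length(W_2)=|E(G)|$ forces $\Clump(G,q)\leq c-\min(|p_1q|,|p_2q|)<c$. This is cleaner and slightly stronger: it treats vertices and midpoints uniformly, it shows that $\Clump(G,\cdot)$ drops strictly at every interior point of the geodesic joining two putative minimizers (a strict quasi-convexity along geodesics), and it avoids the paper's separate justification that the two minimizers have the same type. The bookkeeping you flag is indeed the only delicate point, and it holds as stated: $W_i$ contains all clumps of $p_i$ other than $U_0^{(i)}$ together with the half-open segment from $p_i$ to $q$, these pieces are pairwise disjoint, and their total length is at least $|E(G)|-\Length(U_0^{(i)})+|p_iq|$ regardless of whether $p_i$ is a vertex or an interior edge-point.
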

\begin{proof}
(1) When $p$ is not a vertex of $K(G)$, it is clear that $\Clump(G,p)$ is continuous at $p$ by definition. When $p$ is a vertex of $K(G)$, let $U_1,U_2,\cdots, U_k$ be all the clumps of $p$ and suppose that $U_1$ has the maximal total length among the $k$ clumps. Then, for any $\e\in (0,1)$ and $x\in B_p(\e)\cap U_i$ with $i=2,3,\cdots, k$,
\begin{equation}
\Clump(G,x)\geq \Clump(G,p)\geq \Clump(G,p)-\e
\end{equation}
because there is a clump of $x$ containing $U_1$. Moreover, for any $x\in B_p(\e)\cap U_1$, it is clear that
\begin{equation}
\Clump(G,x)\geq \Clump(G,p)-\e
\end{equation}
since $U_1\setminus [px]$ is a clump of $x$. So, $\Clump(G,p)$ is a lower semi-continuous function of $p\in |K(G)|$.

(2) Let $p\in |K(G)|$ be a point such that $\Clump(G)=\Clump(G,p)$ and $p$ is not a vertex of $K(G)$. Suppose that $p$ is contained in the edge $\ol{xy}$. Let $U_x$ and $U_y$ be the clumps of $p$ containing $x$ and  $y$ respectively. Then
\begin{equation}
\Length(U_x)=|[xp]|+n_x\ \mbox{and}\ \Length(U_y)=|[yp]|+n_y
\end{equation}
where $n_x+1$ and $n_y+1$ are numbers of vertices in $U_x$ and $U_y$ respectively.
We claim that \emph{$n_x=n_y$}. Otherwise, suppose that $n_x>n_y$. Let $q$ be the mid-point of $[xp]$. Then,
\begin{equation*}
\Clump(G,q)=n_x+|[xq]|=n_x+\frac12{|[xp]|}<n_x+|[xp]|=\Clump(G,p)=\Clump(G)
\end{equation*}
which is a contradiction. Now, let $z$ be the mid-point of the edge $\ol{xy}$. Then,
\begin{equation*}
\Clump(G)\leq\Clump(G,z)=n_x+\frac12\leq\Clump(G,p)=\Clump(G).
\end{equation*}
So $p=z$ is the the mid-point of the edge $\ol{xy}$.

(3) It has been shown in the proof of (2).

(4) Let $p,q\in |K(G)|$ be two different points such that $$\Clump(G,p)=\Clump(G,q)=\Clump(G).$$
By (2) and (3), we know that $p$ and $q$ must be simultaneously vertices of $K(G)$ or mid-points of some edges of $K(G)$.

For the first case: $p$ and $q$ are both vertices of $K(G)$, let $U_1$ be the clump of $p$ that contains $q$ and $W_1$ be the clump of $q$ that contains $p$. Then,
\begin{equation}
\Clump(G)=\Clump(G,p)\geq \Length(U_1)
\end{equation}
and similarly,
\begin{equation}
\Clump(G)=\Clump(G,q)\geq \Length(W_1).
\end{equation}
Let $v$ be the vertex in $U_1$ adjacent to $p$ and $z$ be the mid-point of $\ol{pv}$.  Let $U_p$ and $U_q$ be the clumps of $z$ containing $p$ and $q$ respectively. Then, $U_p\subsetneqq W_1$ and $U_q\subsetneqq U_1$. So,
\begin{equation}
\Length(U_p)<\Length(W_1)\ \mbox{and}\ \Length(U_q)<\Length (U_1)
\end{equation}
which implies that
\begin{equation}
\Clump(G,z)=\max\{\Length(U_p),\Length(U_q)\}<\Clump(G).
\end{equation}
This is a contradiction.

For the second case: both $p$ and $q$ are mid-points of some edges, it is clear that  this is impossible by (3).
\end{proof}
For further applications, we need the following notions and propositions come from \cite{FR96}. To make the paper more self-contained, we quote them at the end of this section. All the trees are assumed to be finite combinatorial trees.
\begin{prop}[Lemma 4.3 in \cite{FR96}]\label{prop-clump-tree}
Let $G$ be a finite tree. Then, $$\Clump(G)\leq \frac{|E(G)|}{2}.$$
\end{prop}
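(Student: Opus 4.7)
The plan is to combine parts (1)--(3) of Proposition \ref{prop-clump} with a short local perturbation argument in the style of the proof of Proposition \ref{prop-clump}(4). First, by Proposition \ref{prop-clump}(1), one may pick a point $p\in |K(G)|$ with $\Clump(G)=\Clump(G,p)$, and by Proposition \ref{prop-clump}(2), $p$ must be either a vertex of $K(G)$ or the mid-point of an edge. If $p$ is the mid-point of an edge, Proposition \ref{prop-clump}(3) gives $\Clump(G)=|E(G)|/2$ directly, which settles that case.

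The substantive case is when $p$ is a vertex. Let $U_1,U_2,\ldots,U_r$ be the clumps at $p$, ordered so that $\Length(U_1)=\max_i\Length(U_i)=\Clump(G,p)$. Since $G$ carries the unit weight, every edge of $K(G)$ has length $1$; moreover, because $p$ is a vertex, each $U_i$ is a union of whole edges, so each $\Length(U_i)$ is a positive integer, and $\sum_i \Length(U_i)=|E(G)|$. I would argue by contradiction: suppose $\Length(U_1)>|E(G)|/2$. Integrality then forces the stronger lower bound $\Length(U_1)\geq |E(G)|/2+\tfrac12$, regardless of the parity of $|E(G)|$.

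Now I would perturb $p$ into $U_1$ in exactly the manner used at the end of the proof of Proposition \ref{prop-clump}(4). Pick a vertex $v\in U_1$ adjacent to $p$ and let $q$ be the mid-point of the edge $\overline{pv}$. The two clumps at $q$ are the part $V\subset U_1$ containing $v$, of length $\Length(U_1)-\tfrac12$, and the part $W$ containing $p$, of length $\tfrac12+\sum_{i\geq 2}\Length(U_i)=\tfrac12+|E(G)|-\Length(U_1)$. The lower bound $\Length(U_1)\geq |E(G)|/2+\tfrac12$ forces $\Length(W)\leq |E(G)|/2<\Length(U_1)$, while $\Length(V)<\Length(U_1)$ is trivial. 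Hence $\Clump(G,q)<\Length(U_1)=\Clump(G)$, contradicting the minimality of $p$. I expect the only delicate point to be the integrality step, i.e.\ converting the strict inequality $\Length(U_1)>|E(G)|/2$ into the gap $\Length(U_1)\geq |E(G)|/2+\tfrac12$ needed to dominate the extra $\tfrac12$ contributed to $W$ by the half-edge $[pq]$; once that is in place, the rest is a direct computation.
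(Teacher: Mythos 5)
Your argument is correct. Note that the paper itself offers no proof of this proposition: it is quoted verbatim as Lemma 4.3 of \cite{FR96}, one of several results imported from Friedman for self-containedness, so there is no in-paper argument to compare against. What you have written is a legitimate, self-contained derivation from Proposition \ref{prop-clump}, whose proof in the paper does not depend on the present statement, so there is no circularity. The mid-point case is indeed immediate from Proposition \ref{prop-clump}(3), and in the vertex case your perturbation to the mid-point $q$ of $\overline{pv}$ is exactly the device used in the proofs of parts (2) and (4) of Proposition \ref{prop-clump}. The one point you flagged as delicate checks out: since each clump at a vertex is a union of whole unit-length edges, $\Length(U_1)$ is a positive integer, and $\Length(U_1)>|E(G)|/2$ forces $\Length(U_1)\geq |E(G)|/2+\tfrac12$ whether $|E(G)|$ is even or odd, which is precisely what is needed to absorb the extra half-edge contributed to $W$ and conclude $\Clump(G,q)<\Clump(G,p)$. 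The only cosmetic remark is that since $G$ is a tree and $p$ is a vertex, each clump at $p$ contains exactly one neighbour of $p$, so ``pick a vertex $v\in U_1$ adjacent to $p$'' selects a unique vertex; and the trivial tree ($|E(G)|=0$) should be set aside by convention. Your proof would be a reasonable substitute for the citation if one wanted the paper fully self-contained.
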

\begin{prop}[Lemma 5.1 in \cite{FR96}]\label{prop-clump-tree-1}
Let $G$ be a finite tree. Suppose that $|E(G)|\leq (r+2)k+r$ for some integers $r\geq 0$ and $k\geq 1$. Then, we can remove at most $r$ edges from $G$ to get a forest each of whose trees have clump number not greater than $k$.
\end{prop}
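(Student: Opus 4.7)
The plan is to proceed by induction on $r$. For the base case $r=0$, we have $|E(G)|\leq 2k$ and Proposition \ref{prop-clump-tree} gives $\Clump(G)\leq |E(G)|/2 \leq k$, so no edges need be removed.

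For the inductive step with $r\geq 1$, we may assume $\Clump(G)>k$ (else done), which combined with Proposition \ref{prop-clump-tree} forces $|E(G)|\geq 2k+1$. The key step is to find an edge $e \in E(G)$ such that both components of $G - e$ contain at least $k$ edges. I take the equilibrium point $p\in|K(G)|$ from Proposition \ref{prop-clump} (4), which by part (2) is either a vertex of $K(G)$ or the midpoint of an edge. In the midpoint case, with $p$ on the edge $e$, Proposition \ref{prop-clump} (3) gives two clumps of equal length $|E(G)|/2$, so the two components of $G-e$ each have exactly $(|E(G)|-1)/2\geq k$ edges. In the vertex case, $\Clump(G)=\Clump(G,p)$ is an integer $\geq k+1$, and Proposition \ref{prop-clump-tree} then forces $|E(G)|\geq 2k+2$; picking a clump $C$ at $p$ of length $|E(C)|+1\geq k+1$ and letting $e$ be the edge joining $p$ to $C$, one side of $G - e$ equals $C$ (with $|E(C)|\geq k$ edges), while the bound $|E(C)|+1\leq \Clump(G)\leq |E(G)|/2$ ensures the other side contains at least $|E(G)|/2 \geq k+1$ edges.

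Given such an edge $e$, write $T_1, T_2$ for the components of $G-e$ with $|E(T_1)|\leq |E(T_2)|$. The intervals $I_s := [(s+1)k+s,\,(s+2)k+s]$, $s=0,1,2,\ldots,$ are disjoint and cover every integer $\geq k$, so there is a unique $s_1\geq 0$ with $|E(T_1)|\in I_{s_1}$; the bound $|E(T_1)|\leq (|E(G)|-1)/2$ combined with $|E(G)|\leq (r+2)k+r$ forces $s_1\leq r-1$. Setting $s_2 := r-1-s_1\geq 0$, the lower inequality $|E(T_1)|\geq (s_1+1)k+s_1$ yields $|E(T_2)| = |E(G)|-1-|E(T_1)|\leq (s_2+2)k+s_2$. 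Applying the induction hypothesis separately to $(T_1,s_1)$ and $(T_2,s_2)$ produces at most $s_1 + s_2 + 1 = r$ edge removals in total, and the resulting forest has all trees with clump number $\leq k$.

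The main obstacle is the vertex case of the claim on the existence of $e$: guaranteeing that both components of $G-e$ have at least $k$ edges relies essentially on the sharp bound $\Clump(G)\leq |E(G)|/2$ of Proposition \ref{prop-clump-tree} (and its consequence $|E(G)|\geq 2k+2$ in this subcase), which precisely rules out a lopsided situation where a single clump at $p$ absorbs almost all of $E(G)$.
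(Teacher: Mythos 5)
Your proof is correct, but note that the paper itself contains no proof of this proposition to compare against: it is quoted from Friedman \cite{FR96} (Lemma 5.1 there), with the authors explicitly stating that the results of Section \ref{sec-clump} are imported from \cite{FR96} for self-containedness. What you have supplied is therefore a self-contained argument where the paper offers only a citation, and it holds up. The two pillars both check out. First, the existence of an edge $e$ both of whose sides of $G-e$ carry at least $k$ edges: in the midpoint case Proposition \ref{prop-clump}(3) gives two sides of exactly $(|E(G)|-1)/2\geq k$ edges, and in the vertex case you correctly use that $\Clump(G,p)$ is then an integer, so $\Clump(G)>k$ upgrades to $\Clump(G)\geq k+1$, whence $|E(G)|\geq 2k+2$ by Proposition \ref{prop-clump-tree} and the complementary side has at least $|E(G)|-1-(|E(G)|/2-1)=|E(G)|/2\geq k+1$ edges. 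Second, the bookkeeping: the intervals $I_s=[(s+1)k+s,\,(s+2)k+s]$ do partition the integers $\geq k$, the bound $|E(T_1)|\leq (|E(G)|-1)/2\leq (r+1)k+r-1$ (using $rk+r-1>0$ for $r\geq 1$, $k\geq 1$) forces $s_1\leq r-1$, and $|E(T_2)|=|E(G)|-1-|E(T_1)|\leq (s_2+2)k+s_2$ with $s_2=r-1-s_1$, so the total budget $1+s_1+s_2=r$ comes out exactly; this is also where your insistence that the \emph{smaller} component have at least $k$ edges is genuinely needed, since otherwise $T_2$ alone could demand $r$ removals. The only cosmetic point is that you should call this complete (strong) induction on $r$, since $s_1$ and $s_2$ range over all of $\{0,\dots,r-1\}$ rather than equalling $r-1$.
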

\begin{prop}[Lemma 5.2 in \cite{FR96}]\label{prop-clump-tree-2}
Let $G$ be a finite tree with $|E(G)|\leq (r+2)k+(r+1)$ edges for some integers $k\geq 0$ and $r\geq 0$. Then, we can remove at most $r$ edges from $G$ to get a forest each of whose trees have clump number not greater than $k+\frac12$.
\end{prop}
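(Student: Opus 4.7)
I would prove this by induction on $r$, paralleling the structure of Proposition \ref{prop-clump-tree-1}. For the base case $r=0$, Proposition \ref{prop-clump-tree} gives $\Clump(G) \leq |E(G)|/2 \leq k + \frac{1}{2}$ directly, so no edges need be removed. For the inductive step with $r \geq 1$, if $\Clump(G) \leq k + \frac{1}{2}$ we are already done; otherwise $\Clump(G) \geq k+1$, because clump numbers are integers or half-integers by Proposition \ref{prop-clump}. The aim is to locate a single edge $e^* \in E(G)$ whose removal partitions $G$ into a ``piece'' $S$ with $\Clump(S) \leq k + \frac{1}{2}$ and $|E(S)| \geq k$, plus a remainder $G'$. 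Granted such an $S$, the lower bound on $|E(S)|$ forces
\begin{equation*}
|E(G')| = |E(G)| - 1 - |E(S)| \leq (r+2)k + r + 1 - 1 - k = (r+1)k + r = ((r-1)+2)k + ((r-1)+1),
\end{equation*}
so the inductive hypothesis applied with parameter $r-1$ decomposes $G'$ via at most $r-1$ further edge removals, giving $r$ cuts in total.

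To construct $S$, I would root $G$ at an arbitrary vertex $o$; for each $v \in V(G)$ let $T_v$ denote the subtree rooted at $v$, and set $g(v) := |E(T_v)|$ and $f(v) := \Clump(T_v)$. At the root $v$ of $T_v$ one has the identity $\Clump(T_v, v) = \max_{w \in \mathrm{children}(v)}(g(w)+1)$; in particular this quantity is an integer, and it satisfies $\Clump(T_v, v) \geq \Clump(T_v) = f(v)$ by definition of the clump number. I would then build a strictly descending chain $u_0 = o, u_1, u_2, \ldots$ by the rule: while $f(u_i) \geq k+1$, choose $u_{i+1}$ to be any child of $u_i$ with $g(u_{i+1}) \geq k$. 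Such a child exists because $\max_{w}(g(w)+1) = \Clump(T_{u_i}, u_i) \geq f(u_i) \geq k+1$. Since $G$ is finite, the chain terminates at some $u_N$ with $f(u_N) \leq k + \frac{1}{2}$; and since $f(u_0) = \Clump(G) \geq k+1$, necessarily $N \geq 1$, so $u_N$ was produced by the descent rule and inherits $g(u_N) \geq k$. Taking $S := T_{u_N}$ and $e^* := \{u_N, \mathrm{parent}(u_N)\}$ fulfills all the requirements, since $S$ is a descendant subtree and is therefore attached to $G \setminus V(S)$ by exactly the single edge $e^*$.

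The key technical ingredient is the elementary sub-lemma ``whenever $f(v) \geq k+1$, some child $w$ of $v$ has $g(w) \geq k$,'' which follows immediately from the formula for $\Clump(T_v, v)$ and the inequality $\Clump(T_v) \leq \Clump(T_v, v)$; no delicate analysis of the equilibrium point of $G$ itself is needed. The only boundary subtlety is the case $k = 0$: the chain may then descend to a leaf $u_N$ with $g(u_N) = 0$, but the bound $g(u_N) \geq 0$ is then trivially met and $S = \{u_N\}$ has clump $0 \leq \frac{1}{2}$. An entirely parallel argument would recover Proposition \ref{prop-clump-tree-1}, the only change being to continue the chain while $f(u_i) \geq k + \frac{1}{2}$ and to use integrality of $\Clump(T_{u_i}, u_i)$ to conclude $\Clump(T_{u_i}, u_i) \geq k+1$, hence $g(u_{i+1}) \geq k$ and eventually $f(u_N) \leq k$.
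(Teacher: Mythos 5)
Your argument is correct, but note that the paper does not actually prove this proposition: it is quoted verbatim from Friedman (Lemma 5.2 of \cite{FR96}) with only a citation, so there is no in-paper proof to compare against. Your induction on $r$ is a valid self-contained replacement. The base case is exactly Proposition \ref{prop-clump-tree}, and the arithmetic in the inductive step checks out: cutting off a subtree $S$ with $|E(S)|\geq k$ and $\Clump(S)\leq k+\tfrac12$ leaves $|E(G')|\leq (r+1)k+r=((r-1)+2)k+((r-1)+1)$, so the hypothesis at level $r-1$ applies. The descent construction is sound: rooting $G$ arbitrarily, the identity $\Clump(T_v,v)=\max_w\bigl(g(w)+1\bigr)$ holds because each clump at $v$ consists of the unit-length edge to a child $w$ together with $T_w$, and combining its integrality with $\Clump(T_v)\leq \Clump(T_v,v)$ and the integer/half-integer dichotomy of Proposition \ref{prop-clump}(2) gives your sub-lemma; the chain terminates at some $u_N$ with $N\geq 1$, so $g(u_N)\geq k$ is inherited from the last descent step. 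This is arguably more elementary than the route the paper takes for the analogous statement it does prove (Proposition \ref{prop-tree-sub-k}), which passes through the equilibrium point and Proposition \ref{prop-clump}; your approach needs no uniqueness or location information about the minimizing point at all, only the trivial upper bound $\Clump(T_v)\leq\Clump(T_v,v)$ at the root. The one convention you use silently is $\Clump=0$ for a one-vertex tree (relevant only when $k=0$), which is harmless. Your closing remark that the same descent recovers Proposition \ref{prop-clump-tree-1} is also correct, since $\Clump(G)>k$ forces the integer $\Clump(T_{u_i},u_i)\geq k+1$ there as well.
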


\begin{defn}[Definition 8.1 \& 8.2 in \cite{FR96}]
Let $k\geq 1$ be an integer.

 (1) A tree $G$ is called of type A with respect to $k$ if $|E(G)|=rk-1$ for some $r\geq 1$ and  by removing $r-1$ edges we are left with a forest of $r$ trees each of $k-1$ edges.

 (2) A tree $G$  is called of type B if  $(r-1)k\leq|E(G)|\leq rk-1$ for some $r\geq 2$ and by removing $r-2$ edges we are left with a forest of trees whose clump number are not greater than $k-1$.
\end{defn}
\begin{prop}[Lemma 8.3 in \cite{FR96}]\label{prop-tree-AB}
Let $G$ be finite tree and $k\geq 1$ be an integer. If $|E(G)|\geq k-1$  then $G$ is  either of type A or type B with respect to $k$.
\end{prop}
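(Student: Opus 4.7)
The plan is strong induction on $|E(G)|$. The case $k=1$ is trivial (remove all edges to obtain isolated vertices, each a tree with $k-1=0$ edges), so assume $k\geq 2$. The base case $|E(G)|=k-1$ gives $r=1$, and $G$ itself witnesses Type A (no edges removed, one tree of $k-1$ edges).

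For the inductive step, let $r\geq 2$ be the unique integer with $(r-1)k\leq |E(G)|\leq rk-1$. If $|E(G)|\leq rk-2$, I apply Proposition \ref{prop-clump-tree-1} with parameters $r'=r-2$ and $k'=k-1$: since $|E(G)|\leq rk-2=(r'+2)k'+r'$, at most $r-2$ edges can be removed to leave a forest whose components all have clump number $\leq k-1$, certifying Type B.

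When $|E(G)|=rk-1$, I seek a \emph{splitter} edge $e\in E(G)$ such that $G-e$ has a component $G_1$ with exactly $k-1$ edges (so the other component $G_2$ has $(r-1)k-1$ edges). Given a splitter, apply the inductive hypothesis to $G_2$. If $G_2$ is Type A, then $r-2$ further deletions produce $r-1$ trees of $k-1$ edges; together with $e$ and $G_1$ this yields $r-1$ deletions and $r$ trees of $k-1$ edges, so $G$ is Type A. If $G_2$ is Type B (necessarily $r\geq 3$; when $r=2$ the tree $G_2$ has $k-1$ edges and is Type A), then $r-3$ further deletions give $r-2$ trees of clump $\leq k-1$; together with $e$ and $G_1$ (whose clump is $\leq (k-1)/2\leq k-1$ by Proposition \ref{prop-clump-tree}) we get $r-2$ deletions and $r-1$ trees of clump $\leq k-1$, so $G$ is Type B.

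The main obstacle is the subcase $|E(G)|=rk-1$ with no splitter. Here $G$ cannot be Type A, since any Type A decomposition yields a splitter via the edge joining a leaf of the $r$-piece quotient tree (on the $r$ subtrees of $k-1$ edges connected by the $r-1$ removed edges) to the rest. To show Type B directly, I handle $r=2$ by a direct equilibrium-point argument: $|E(G)|=2k-1$ is odd, so the equilibrium point cannot be the midpoint of an edge (which would force clumps of length $(2k-1)/2$ and realize a splitter via that edge), hence is a vertex $p$; perturbing $p$ along the edge of its largest clump shows that any maximum clump of size $\geq k$ yields a nearby point with strictly smaller $\Clump$ (at the midpoint of that edge when the clump equals $k$, at any small positive displacement when it exceeds $k$), contradicting equilibrium, so $\Clump(G)\leq k-1$ and $G$ is Type B with $r-2=0$ removals. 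For $r\geq 3$, a centroid-type cut splits $G$ into two subtrees each of size $\geq k$ (the no-splitter hypothesis rules out the forbidden sizes $k-1$ and $(r-1)k-1$); both subtrees must then be Type B, since a Type A decomposition of either would inherit a splitter back to $G$, and their Type B decompositions combine (possibly via a reinsertion/merger adjustment evaluated at a reinserted edge's midpoint) to yield the required Type B decomposition of $G$. This structural adjustment parallels the analysis in \cite{FR96} and is the delicate piece of the proof.
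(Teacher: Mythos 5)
The paper does not actually reprove this proposition --- the remark following it explicitly defers to Friedman's original argument --- so your write-up can only be judged on its own correctness. Most of it holds up: the reduction of the case $|E(G)|\leq rk-2$ to Proposition \ref{prop-clump-tree-1} with parameters $(r-2,k-1)$ is exactly right; the splitter case is a correct induction (including the bookkeeping that $G_2$ has Type B parameter $r-1$, whence $r\geq 3$ there); and the $r=2$ no-splitter case via the equilibrium point is sound, since a midpoint equilibrium would give two clumps containing $k-1$ full edges each (hence a splitter), while at a vertex equilibrium a clump of length $L\geq k$ is necessarily the unique maximal one (as $2L>2k-1$), so sliding into it strictly decreases $\Clump(G,\cdot)$, contradicting Proposition \ref{prop-clump}.

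The genuine gap is in the no-splitter case with $r\geq 3$: your assertion that a ``centroid-type cut splits $G$ into two subtrees each of size $\geq k$'' is false. The no-splitter hypothesis forbids only a component of size exactly $k-1$ (equivalently $(r-1)k-1$ on the other side); it does not forbid sizes $0,1,\dots,k-2$. The star $K_{1,rk-1}$ is a concrete counterexample: for $k\geq 2$ it has no splitter, yet every edge deletion leaves components of sizes $0$ and $rk-2$, so no cut with both sides of size $\geq k$ exists. The missing case is precisely ``every edge of $G$ has its smaller side of size at most $k-2$,'' and it can be closed as follows: orient each edge toward its larger side (there are no ties, since equal sides would each carry $(rk-2)/2\geq k$ edges when $r\geq 3$ and $k\geq 2$); a sink $v$ of this orientation has every component of $G-v$ of size at most $k-2$, hence every clump at $v$ of length at most $k-1$, so $\Clump(G)\leq k-1$ and $G$ is Type B with no edges removed. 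Once that case is added, your combination step needs no ``reinsertion/merger adjustment'': deleting $e$ together with the $r_1-2$ and $r_2-2$ Type B edges of the two sides removes $r_1+r_2-3$ edges in all, and $(r_1+r_2-2)k\leq |E(G_1)|+|E(G_2)|=rk-2$ forces $r_1+r_2-3\leq r-2$, so the resulting forest certifies Type B directly.
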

\begin{rem}
The statement of the Proposition \ref{prop-tree-AB} is slightly more general than the original statement of Lemma 8.3 in \cite{FR96}. The proof is the same as Friedman's original proof. We state it in this more general form because in Friedman's original argument by induction, it seems that such a general form of statement is needed.
\end{rem}
\section{Minimal Steklov eigenvalues}\label{sec-min-sigma}
In this section, we prove Theorem \ref{thm-main}. The following conclusion is crucial in our solution of the extremal problems for Steklov eigenvalues.
\begin{thm}\label{thm-Steklov-clump}
Let $G$ be a nontrivial finite combinatorial tree. Then,
\begin{equation}\label{eq-steklov-clump}
\sigma_2(G)\geq \Lambda(\Clump(G)).
\end{equation}
The equality holds if and only if at least two of the clumps of the equilibrium point of $G$ are the minimal brooms $\Br(\Clump(G))$.
\end{thm}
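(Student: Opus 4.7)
The plan is to bound $\sigma_2(G)$ from below by reducing to the minimal-broom problem on a nodal domain trapped inside a single clump at the equilibrium point of $G$. The key ingredients will be the nodal-domain identity from Theorem \ref{thm-nodal-domain}, the minimal-broom lower bound of Theorem \ref{thm-lambda-1}, the definition of $\Br(\cdot)$ in Definition \ref{def-min-broom-l}, and the equilibrium point produced by Proposition \ref{prop-clump}.

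In detail, let $f$ be an eigenfunction for $\sigma_2(G)$. Since $G$ is a nontrivial connected combinatorial tree, $\sigma_2(G)>0$ and $f$ is $\vv<\cdot,\cdot>_{B(G)}$-orthogonal to constants, so $f$ changes sign on $B(G)$, $\wt f$ has zeros in $|K(G)|$, and $f$ has at least two nodal domains. Let $p$ be the equilibrium point of $G$ furnished by Proposition \ref{prop-clump}. If $p\in \wt f^{-1}(0)$, every nodal domain lies in a clump of $p$; otherwise $p$ sits in a single nodal domain $U_0$ and all remaining nodal domains lie in clumps of $p$. In either case I pick a nodal domain $U$ with $\Length(U)\leq \Clump(G)$. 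By Theorem \ref{thm-nodal-domain}, $\lambda_1(G_U)=\sigma_2(G)$. The induced graph $G_U$ meets the hypotheses of Theorem \ref{thm-lambda-1}, with $G_U[\Omega_D(G_U)]$ a unit-weighted subtree on $n+1=|V(G)\cap U|$ vertices and the Dirichlet boundary edges carrying weights $1/|[xz_{xy}]|$ coming from the zeros $z_{xy}$ of $\wt f$ on $\partial U$. Writing $l=1/\sum w_{xy}$, the HM--AM inequality gives $l\leq \sum|[xz_{xy}]|$ and hence $l+n\leq \Length(U)$. A direct case-check on Lemma \ref{lem-broom-l-n} shows that $\Br(L)$ is the $\lambda_1$-minimizer among minimal brooms of total length $L$, i.e.\ $\Lambda(l,n)\geq \Lambda(l+n)$; combining this with strict monotonicity of $\Lambda$ yields
\[\sigma_2(G)=\lambda_1(G_U)\geq \Lambda(l,n)\geq \Lambda(l+n)\geq \Lambda(\Length(U))\geq \Lambda(\Clump(G)).\]

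For the rigidity, each inequality in the chain must be tight. The last one forces $\Length(U)=\Clump(G)$, so $U$ fills a maximal-length clump of $p$; the preceding one forces $l+n=\Length(U)$, giving $|B_D(G_U)|=1$ so that $\wt f$ has a unique zero on $\partial U$; and the equality case in Theorem \ref{thm-lambda-1} forces $G_U\cong \Br(\Clump(G))$. Running the same argument on the opposite side of that zero produces a second clump of $p$ also isomorphic to $\Br(\Clump(G))$. Conversely, when two clumps of $p$ are copies of $\Br(\Clump(G))$, one assembles a Steklov eigenfunction with eigenvalue $\Lambda(\Clump(G))$ by placing $\pm$ copies of the first Dirichlet eigenfunction from Lemma \ref{lem-lambda1-broom} on the two broom-clumps and extending by zero elsewhere. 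The main obstacle I foresee is in the rigidity direction: carefully verifying that $p$ is the common zero of $\wt f$ separating the two extremal nodal domains (ruling out the degenerate case where $p$ lies strictly inside a nodal domain at equality), and confirming that the glued test function is indeed a $\sigma_2$-eigenfunction rather than an eigenfunction for some higher $\sigma_i$, which reduces to orthogonality of the glued function to constants and to checking no lower-eigenvalue obstruction arises from the remaining clumps.
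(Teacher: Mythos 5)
Your strategy coincides with the paper's: both proofs trap a nodal domain of a $\sigma_2$-eigenfunction inside a clump of the equilibrium point $p$, use Theorem \ref{thm-nodal-domain} to transfer $\sigma_2(G)$ to $\lambda_1$ of that domain, and then invoke Theorem \ref{thm-lambda-1}. The one genuine difference is the middle step: the paper zero-extends the eigenfunction from the nodal domain to the induced graph of the \emph{whole clump} and compares Rayleigh quotients (each edge of the nodal domain sits inside an edge of the clump with larger weight), landing directly on $\lambda_1(G_U)\geq\Lambda(\Length(U))$; you instead apply Theorem \ref{thm-lambda-1} to the nodal domain itself and then need the auxiliary comparison $\Lambda(l,n)\geq\Lambda(l+n)$ together with $l+n\leq\Length(U)$. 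That comparison is true, but it is an extra case-check on Lemma \ref{lem-broom-l-n} that the paper's route avoids.

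The two obstacles you flag in the rigidity direction both dissolve, and you should close them explicitly. (a) At equality your chain forces $l+n=\Length(U)=\Length(C)$ for the clump $C\supseteq U$, so $\ol U\supseteq C$ and hence $p\in\ol C\subseteq\ol U$; since you chose $U$ with $p\notin U$, this puts $p\in\p U\subseteq\wt f^{-1}(0)$, so $\wt f(p)=0$ automatically, the degenerate case cannot occur, and $U=C$ is a full clump with the single Dirichlet vertex $p$. Running the argument on a nodal domain of the opposite sign (which also avoids $p$, since $\wt f(p)=0$) then yields the second broom-clump. One further point needs care: equality in Theorem \ref{thm-lambda-1} gives $G_U\cong\Br(l,n)$, but $\Lambda(l,n)=\Lambda(l+n)$ alone does not force $\Br(l,n)\cong\Br(l+n)$ (e.g. $\Lambda(3,2)=\Lambda(1,4)=\frac17=\Lambda(5)$, yet $\Br(3,0,2)\not\cong\Br(5)$); what pins down the isomorphism class is that the Dirichlet edge of a nodal domain is a sub-segment of a unit edge, so $l\leq1$, and then $l+n=\Clump(G)$ leaves only the split defining $\Br(\Clump(G))$ in Definition \ref{def-min-broom-l}. (b) For the converse there is no need to show the glued function is a $\sigma_2$-eigenfunction: normalizing so that $\vv<f_U,1>_{B(G_U)}+\vv<f_W,1>_{B(G_W)}=0$ makes $\psi$ satisfy the harmonic equation at the gluing locus (by Green's identity this normalization is equivalent to the balance of the incoming derivatives at $p$) and orthogonal to constants on $B(G)$, hence an eigenfunction for some $\sigma_i$ with $i\geq2$; this gives $\sigma_2(G)\leq\Lambda(\Clump(G))$, which combined with the already-proved lower bound yields equality, with no ``lower-eigenvalue obstruction'' left to rule out.
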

\begin{proof}
Let $p\in |K(G)|$ be the equilibrium point of $G$ and  $f\in \R^{V(G)}$ be an eigenfunction for $\sigma_2(G)$. Suppose that $\wt f(p)\leq 0$. Let $U$ be a clump of $p$ containing a leaf $v$ of $G$ such that $f(v)>0$ and $N$ be the nodal domain of $f$ containing $v$. It is clearly that $N\subset U$. By Theorem \ref{thm-nodal-domain}, $g=\wt f|_{V(G_N)}$ is an eigenfunction for $\lambda_1(G_N)=\sigma_2(G)$. Let $\ol g\in \R^{V(G_{U})}$ be the zero extension of $g$. That is $g(x)=0$ for any $x\in V(G_{U})\setminus V(G_N)$. Then, by Theorem \ref{thm-nodal-domain} and Theorem \ref{thm-lambda-1},
\begin{equation}\label{eq-zero-extension}
\begin{split}
\sigma_2(G)=\lambda_1(G_N)=&\frac{\vv<dg,dg>_{G_N}}{\vv<g,g>_{B(G_N)}}\geq \frac{\vv<d\ol g,d\ol g>_{G_U}}{\vv<\ol g,\ol g>_{B(G_U)}}\\
\geq&\lambda_1(G_{ U})\geq \Lambda(\Length(U))\geq\Lambda(\Clump(G)),
\end{split}
\end{equation}
where the first inequality comes from the fact that each edge of $N$ is contained in some edge of $U$.

Moreover, if $\sigma_2(G)=\Lambda(\Clump(G))$, then $U=N$ and $\wt f(p)=0$. Furthermore, by Theorem \ref{thm-lambda-1}, $G_U$ must be a minimal broom  $\Br(\Clump(G))$. Similarly, let $W$ be a clump of $p$ containing a leaf $w$ of $G$ such that $f(w)<0$. Then, for the same reason, $G_W$ must be a minimal broom  $\Br(\Clump(G))$. This completes the necessary part for the equality to hold.

Conversely, if there are two clumps $U,W$ of $p$ that are minimal brooms $\Br(\Clump(G))$. Let $f_U$ be the eigenfunction and $f_W$ be the eigenfunctions of $G_U$ and $G_W$ respectively such that
$$\vv<f_U,1>_{B(G_U)}+\vv<f_W,1>_{B(G_W)}=0.$$
Let  $\psi\in \R^{V(G)}$ be such that
\begin{equation}
\psi(x)=\left\{\begin{array}{ll}f_U(x)&x\in V(G_U)\\
f_W(x)&x\in V(G_W)\\
0&\mbox{otherwise.}
\end{array}\right.
\end{equation}
Then, $\psi$ is a Steklov eigenfunction of $G$ with eigenvalue $\Lambda(\Clump(G))$ by Lemma \ref{lem-lambda1-broom}. By the estimate \eqref{eq-steklov-clump}, we know that
\begin{equation}
\sigma_2(G)=\Lambda(\Clump(G)).
\end{equation}
This completes the proof of the theorem.
\end{proof}
As an application of the Theorem \ref{thm-Steklov-clump}, we can solve the extremal problem for $\sigma_2(G)$ when $G$ is a tree.
\begin{thm}\label{thm-sigma2-tree}
Let $G$ be a nontrivial finite combinatorial tree. Then,
\begin{equation}\label{eq-sigma2-tree}
\sigma_2(G)\geq \Lambda\left(\frac{|E(G)|}{2}\right).
\end{equation}
The equality holds if and only if \begin{enumerate}
    \item $G=\DB(m,2m,m)$ when $|E(G)|=4m$;
    \item $G=\DB(m,2m+1,m)$ when $|E(G)|=4m+1$;
    \item $G=\DB(m+1,2m,m+1), \DB(m,2m+2,m)\ \mbox{or}\ \DB(m,2m+1,m+1)$ when $|E(G)|=4m+2$;
    \item $G=\DB(m+1,2m+1,m+1)$ when $|E(G)|=4m+3$.
    \end{enumerate}
\end{thm}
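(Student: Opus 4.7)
The plan is to chain three earlier results. First, by Theorem \ref{thm-Steklov-clump}, $\sigma_2(G)\geq \Lambda(\Clump(G))$. Next, by Proposition \ref{prop-clump-tree}, $\Clump(G)\leq |E(G)|/2$. Since $\Lambda$ is strictly decreasing on $(0,\infty)$ by Lemma \ref{lem-broom-l}, these combine to
\begin{equation*}
\sigma_2(G)\geq \Lambda(\Clump(G))\geq \Lambda\!\left(\tfrac{|E(G)|}{2}\right),
\end{equation*}
which is \eqref{eq-sigma2-tree}. Moreover, equality in \eqref{eq-sigma2-tree} forces simultaneously $\Clump(G)=|E(G)|/2$ and the rigidity of Theorem \ref{thm-Steklov-clump}: at least two clumps of the equilibrium point $p$ must be copies of $\Br(|E(G)|/2)$.

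I would then analyze the equilibrium structure by parity of $|E(G)|$, using Proposition \ref{prop-clump}. Part (2) of that proposition tells us $p$ is either a vertex or the midpoint of an edge, so $\Clump(G)$ is integer or half-integer accordingly. When $|E(G)|=2N$ is even, $\Clump(G)=N$ is an integer and $p$ must be a vertex; if two clumps of $p$ already have length $N$, they exhaust all edges, so $p$ has degree two and these are the only two clumps. When $|E(G)|$ is odd, $\Clump(G)$ is a half-integer and by part (3) of Proposition \ref{prop-clump} the two clumps of the midpoint $p$ automatically have equal length $|E(G)|/2$; each clump carries a half-edge of length $1/2$ (and weight $2$) incident to $p$, which plays the role of the Dirichlet boundary edge in the broom notation $\Br(1/2,i,d)$.

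In each case I would identify the minimal broom explicitly via Lemma \ref{lem-broom-l-n} and Lemma \ref{lem-broom-l}, then assemble the two clumps at $p$ to read off $G$: for $|E(G)|=4m$, $\Br(2m)=\Br(1,m{-}1,m)$ and gluing two copies at the degree-two center yields $\DB(m,2m,m)$; for $|E(G)|=4m+2$, $\Br(2m{+}1)$ is either $\Br(1,m{-}1,m{+}1)$ or $\Br(1,m,m)$, and the symmetric/mixed gluings of these two forms give exactly $\DB(m{+}1,2m,m{+}1)$, $\DB(m,2m{+}2,m)$, and $\DB(m,2m{+}1,m{+}1)$; for $|E(G)|=4m+1$, each clump has $2m$ full edges plus the half-edge, Lemma \ref{lem-broom-l-n} identifies it as $\Br(1/2,m,m)$, and gluing across the midpoint produces $\DB(m,2m{+}1,m)$; for $|E(G)|=4m+3$, a parallel computation gives $\Br(1/2,m,m{+}1)$ and hence $\DB(m{+}1,2m{+}1,m{+}1)$. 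Conversely, for each graph listed in (1)--(4) one checks directly that $\Clump(G)=|E(G)|/2$ and that two clumps of the equilibrium point are the appropriate minimal brooms, so the converse direction of Theorem \ref{thm-Steklov-clump} certifies equality.

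The main obstacle is the bookkeeping in the third paragraph: correctly matching the half-integer broom $\Br(1/2,i,d)$ to the induced weighted graph at a midpoint (in particular verifying that the half-edge of length $1/2$ and weight $2$ is exactly the Dirichlet boundary edge of the broom), and enumerating without duplication or omission the three distinct gluings that arise in the $|E(G)|=4m+2$ case from the two possible shapes of $\Br(2m+1)$. Once these identifications are carried out via Lemma \ref{lem-broom-l-n} and Lemma \ref{lem-broom-l}, the remaining arguments are routine.
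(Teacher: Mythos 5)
Your proposal is correct and follows essentially the same route as the paper: chain Theorem \ref{thm-Steklov-clump} with Proposition \ref{prop-clump-tree} and the strict monotonicity of $\Lambda$, then invoke the rigidity part of Theorem \ref{thm-Steklov-clump} for the equality case. The paper's own proof stops at ``by the rigidity part of Theorem \ref{thm-Steklov-clump}, we get the conclusion,'' so the parity analysis of the equilibrium point and the broom-gluing bookkeeping you carry out (which I have checked and is accurate, including the three gluings for $|E(G)|=4m+2$ and the half-edge identification in the odd cases) simply makes explicit what the paper leaves to the reader.
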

\begin{proof}
By Proposition \ref{prop-clump-tree}, we know that
$$\Clump(G)\leq \frac{|E(G)|}{2}.$$
Thus, by Theorem \ref{thm-Steklov-clump},
\begin{equation}
\sigma_2(G)\geq \Lambda(\Clump(G))\geq \Lambda\left(\frac{|E(G)|}{2}\right).
\end{equation}
When the equality of \eqref{eq-sigma2-tree} holds, we know that $\Clump(G)=\frac{|E(G)|}{2}$. Moreover, by the rigidity part of Theorem \ref{thm-Steklov-clump}, we get the conclusion.
\end{proof}
By combining Theorem \ref{thm-sigma2-tree} and Lemma \ref{lem-graph-tree}, we can prove (1) of Theorem \ref{thm-main}.

\begin{proof}[Proof for (1) of Theorem \ref{thm-main}]
Let $T$ be a spanning tree of $G$. Then, by Theorem  \ref{thm-mono},
\begin{equation}
\sigma_2(G)\geq \sigma_2(T)\geq \Lambda\left(\frac{|E(T)|}{2}\right)=\Lambda\left( \frac{n-1}{2}\right).
\end{equation}
Conversely, if $\sigma_2(G)=\Lambda\left( \frac{n-1}{2}\right)$, by the rigidity part of Theorem \ref{thm-sigma2-tree}, $T$ must be a dumbbell listed in Theorem \ref{thm-sigma2-tree}. Moreover, by Lemma \ref{lem-dumbbell} and Lemma \ref{lem-graph-tree}, $G=T$. This completes the proof.
\end{proof}
Next, we come to deal with the case $i\not|n$ in Theorem \ref{thm-main}. By imitating the arguments in \cite{FR96}, we need to modify the notion of sub-$k$ in \cite[Definition 5.3]{FR96}.
\begin{defn}
Given a positive integer $k$, a finite combinatorial tree $G$ is called sub-$k$ if either (i) it has clump number less than $k$, or (ii) it has clump number $k$ with respect to some vertex $o$, but contains at most one clump with respect to $o$ which is a minimal broom $\Br(k)$ with $o$ as the root.
\end{defn}
By Theorem \ref{thm-Steklov-clump}, if a tree $G$ is sub-$k$, then
\begin{equation}
\sigma_2(G)>\Lambda(k).
\end{equation}
Similarly as in \cite[Lemma 5.4]{FR96}, we have the following result with the proof similar to the proof of \cite[Lemma 5.4]{FR96}. For completeness, we give the proof of the result in details.
\begin{prop}\label{prop-tree-sub-k}
Let $G$ be a combinatorial tree with $|E(G)|=(r+2)k$ edges for some integers $r\geq 0$ and $k\geq 1$. Then, unless $G$ is  star of degree $r+2$ with each arm a minimal broom $\Br(k)$ with the center of the star as the root,  we can remove at most $r$ edges from $G$ to get a forest, each of whose trees are sub-$k$.
\end{prop}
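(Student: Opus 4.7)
The plan is to proceed by strong induction on $r \ge 0$. For the base case $r = 0$, we have $|E(G)| = 2k$. Proposition \ref{prop-clump-tree} yields $\Clump(G) \le k$, and Proposition \ref{prop-clump}(2)--(3) forces the equilibrium to lie at a vertex whenever $\Clump(G)$ is an integer (a midpoint equilibrium would produce a half-integer clump number). If $\Clump(G) < k$ then $G$ is sub-$k$ with the empty removal. Otherwise $\Clump(G) = k$, the equilibrium $p$ is a vertex, and --- should $G$ fail to be sub-$k$ --- by definition at least two clumps with respect to $p$ are minimal brooms $\Br(k)$ rooted at $p$, each contributing $k$ edges. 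These two clumps exhaust the $2k$ edges of $G$, exhibiting $G$ as the star of degree $r + 2 = 2$ with arms $\Br(k)$, the exceptional configuration.

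For the inductive step, fix $r \ge 1$ and assume the statement holds for all smaller values. If $G$ is already sub-$k$, take the empty removal. Otherwise I first claim that the equilibrium of $G$ is a vertex $p$ supporting at least two $\Br(k)$ clumps rooted at $p$. When $\Clump(G) = k$ this is immediate from the failure of the sub-$k$ condition together with the integer/midpoint dichotomy. The case $\Clump(G) > k$ constitutes the main obstacle in the proof: here no clump at the equilibrium is already a minimal broom of the correct type. My intended route is to combine Proposition \ref{prop-clump-tree-1} with a local rerouting argument: a direct application of Proposition \ref{prop-clump-tree-1} already expends the full budget of $r$ edges to bring the clump number down to $\le k$, but any resulting tree that has two $\Br(k)$ clumps at its own equilibrium must contribute at least $2k$ edges to the forest, and this excess can be exploited to reroute one of the removed edges so as to eliminate the offending configuration while staying within budget.

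Granted the two-clump claim, pick a $\Br(k)$ clump $U$ at $p$ and remove the unique edge $e$ joining $p$ to $U$. The separated component $U$ is a copy of $\Br(k)$ whose clump number is at most $k/2$ by Proposition \ref{prop-clump-tree}, hence trivially sub-$k$. The other component $G'$ contains $p$ and satisfies $|E(G')| = (r+1)k = ((r-1)+2)k$, so the inductive hypothesis applies to $G'$ with parameter $r-1$. Either $G'$ admits a sub-$k$ decomposition after removing at most $r-1$ further edges --- whereupon together with $e$ we have removed at most $r$ edges of $G$ to produce the desired sub-$k$ forest --- or $G'$ is itself the exceptional star of degree $r+1$ centered at some vertex $q$ with each arm isomorphic to $\Br(k)$.

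In the latter case I finish by showing $p = q$, which forces $G$ into the excluded star of degree $r+2$ with arms $\Br(k)$. Every clump of $p$ computed inside $G'$ is contained in a clump of $p$ inside $G$ and hence has length at most $\Clump(G) = k$. If $p \ne q$, then $p$ lies inside one of the $\Br(k)$ arms of $G'$; the clump of $p$ in $G'$ containing $q$ then includes $q$, all $r$ other arms, and a nonempty combinatorial segment of $p$'s own arm joining $p$ back to $q$, so its length is at least $rk + 1 > k$ for $r \ge 1$, contradicting the constraint above. Thus $p = q$, and reattaching $U$ at $p$ realizes $G$ as the excluded exceptional star, completing the induction.
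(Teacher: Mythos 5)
Your base case is correct, and your endgame (splitting off a $\Br(k)$ clump at the equilibrium, applying the inductive hypothesis to the remaining $(r+1)k$-edge tree, and showing $p=q$ when that tree is the exceptional star) is sound \emph{whenever the two-clump claim is available}. But that claim is false precisely in the case you yourself flag as ``the main obstacle,'' and your proposed repair is not an argument, so the proof has a genuine gap. Concretely: take $G$ to be a path with $(r+2)k$ edges, $r\geq 1$. Then $\Clump(G)=(r+2)k/2>k$, so $G$ is not sub-$k$ and is not the exceptional star; yet its equilibrium is the mid-point of the path, and the two clumps there are half-paths of length $(r+2)k/2>k$, so neither is a minimal broom $\Br(k)$. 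There is therefore nothing to ``pick'' in your third paragraph. The one-sentence rerouting plan does not fill this hole: Proposition \ref{prop-clump-tree-1} controls only clump numbers, not whether any clump is a broom; you are only permitted to delete edges, so ``rerouting'' a deleted edge is undefined as an operation; and no accounting is offered for why the modified deletion set still has size at most $r$ or why the resulting trees become sub-$k$. Note also that your $p=q$ step invokes $\Clump(G)=k$, so even if some vertex with two $\Br(k)$ clumps were somehow produced when $\Clump(G)>k$, the remainder of your argument would not apply to it verbatim.

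For comparison, the paper never asserts such a structural claim about the equilibrium. It passes to a vertex $v$ with $\Clump(G,v)\leq rk$ and argues by cases on the size of a largest clump $T$ at $v$: if $|E(T)|=qk$ with $2\leq q\leq r$, it cuts off $T$ and treats the two pieces with Proposition \ref{prop-clump-tree-2} together with the induction hypothesis, checking that only $r-1$ deletions are used in total; if $|E(T)|>k$ is not a multiple of $k$, both pieces are handled by Proposition \ref{prop-clump-tree-2} alone; and only when every clump at $v$ has length at most $k$ does the star/broom dichotomy enter, at which point one deletes the root edges of all but one of the $\Br(k)$ clumps. You need an argument of this kind (or a fully worked-out alternative) for the case $\Clump(G)>k$ before the induction closes.
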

\begin{proof}
We proceed by induction.

When $r=0$, if $G$ is not sub-$k$, then $\Clump(G)\geq k$ and there is  vertex $o$ such that there are least two clumps with respect to $o$ that are minimal brooms $\Br(k)$ with $o$ as the root. This means that $G$ is a star with two arms that are  minimal brooms $\Br(k)$. This proves the conclusion for $r=0$.

When $r=1$, let $p$ be the equilibrium point of $G$. By Proposition \ref{prop-clump-tree},
 $$\Clump(G,p)\leq |E(G)|/2=\frac{3k}{2}.$$
If $p$ is an mid-point of some edge, by replacing $p$ by an end-vertex $v$ of that edge,  we know that there is a vertex $v$ of $G$ such that
\begin{equation}
\Clump(G,v)\leq2k.
\end{equation}
Let $T$ be a clump of $G$ with respect to $v$ such that
$$|E(T)|=\Clump(G,v)\leq 2k.$$

If $|E(T)|>k$, after removing the edge in $T$ joining to $v$, we get two trees $T_1$ and $T_2$ such that
$$k\leq |E(T_1)|=|E(T)|-1\leq 2k-1$$
and
$$k\leq|E(T_2)|=|E(G)|-|E(T)|\leq 2k-1.$$
By Proposition \ref{prop-clump-tree}, we know that $T_1$ and $T_2$ are both sub-$k$.

If $|E(T)|\leq k$ and $G$ is not a star with center $v$ and each arm a minimal broom $\Br(k)$, then there are at most two clumps of $G$ that are minimal brooms $\Br(k)$. If there is no clump of $G$ with respect to $v$ that is a minimal broom $\Br(k)$, the $G$ itself is sub-$k$. So no edge need to be removed. If there is a clump of $G$ that is a minimal broom $\Br(k)$, remove the edge in that clump that is adjacent to $v$, we get a forest of two trees whose trees are both sub-$k$ because there are at most two clumps of $G$ that are minimal brooms $\Br(k)$. This proves the conclusion for $r=1$.

Suppose the conclusion is true for $r\leq m-1$  with $m\geq 2$. When $r=m$, let  $p$ be the equilibrium point of  $G$. Then, by Proposition \ref{prop-clump-tree},
 $$\Clump(G,p)\leq \frac{|E(G)|}2=\frac{(m+2)k}{2}.$$
 If $p$ is the mid-point of some edge, by replacing $p$ by an end-vertex $v$ of that edge,  we know that there is a vertex $v$ of $G$ such that
\begin{equation}
\Clump(G,v)\leq mk.
\end{equation}

If there is a clump $T$ of $v$ with $|E(T)|=kq$ with $ 2\leq q\leq m$, then after removing the edge in $T$ adjacent to $v$, the tree $G$ will break into two trees $T_1$ and $T_2$ with
\begin{equation}
|E(T_1)|=|E(T)|-1=kq-1=q(k-1)+q-1
\end{equation}
and
\begin{equation}
|E(T_2)|=|E(G)|-|E(T)|=(m+2-q)k.
\end{equation}
By Proposition \ref{prop-clump-tree-2}  and the induction hypothesis, we can remove no more than $q-2$ edges from $T_1$ to get a forest whose trees are of clump numbers not greater than $k-\frac12<k$, and we can also remove $m-q$ edges from $T_2$ to get a forest whose trees are all sub-$k$. Note that we have removed no more than
$$1+q-2+m-q=m-1$$
edges from $G$. We get the conclusion for this case when $r=m$.

If there is a clump $T$ of $v$ with clump number greater than $k$ but not a multiple of $k$, let $|E(T)|=kq+i$ with $i=1,2,\cdots, k-1$ and $q\geq 1$. Then, by removing the edge in $T$ adjacent to $v$, we get two trees $T_1$ and $T_2$ such that
$$|E(T_1)|=kq+i-1\leq qk+k-2<(q+1)(k-1)+q$$ and
$$|E(T_2)|\leq (m-q+2)k-1=(m-q+2)(k-1)+(m-q+1).$$ So, by Proposition \ref{prop-clump-tree-2}, we can remove $q-1$ and $m-q$ edges from $T_1$ and $T_2$ respectively to get a forest whose trees are of clump numbers not greater than $k-\frac12<k$. This gives us the conclusion in this case when $r=m$.

The remaining case is that each clump of $v$ is of total length not greater than $k$. Let $P_1,P_2,\cdots, P_t, T_1,T_2,\cdots,T_s$ be clumps of $G$ with respect to $v$. Such that $P_1,P_2,\cdots, P_t$ are minimal brooms $\Br(k)$ and $T_1,\cdots,T_s$ are either trees  of total length not greater $k-1$ or  trees of of total length $k$ that are not a minimal broom $\Br(k)$. If $t=m+2$, then $G$ is a star with degree $m+2$ and each arm a minimal broom $\Br(k)$. If $t\leq m+1$, then by removing the edges in $P_1,P_2,\cdots, P_{t-1}$ adjacent to $v$, we get  a forest whose trees are all sub-$k$. This completes the proof of the conclusion.
\end{proof}
We are now ready to solve the extremal problem for trees when $i\not||V(G)|$.
\begin{thm}\label{thm-sigma-i-tree}
Let $G$ be a finite combinatorial tree and  $2<i<|V(G)|$ be an integer with $i\not|\ |V(G)|$. Then
\begin{equation}\label{eq-sigma-i-tree}
\sigma_i(G)\geq \Lambda(m)
\end{equation}
where $m=\left\lfloor \frac{V(G)}i\right\rfloor$. Moreover,  when $|V(G)|=im+1$, the equality holds if and only if $G$ is a star of degree $i$ such that each arm is a minimal broom $\Br(m)$ with the center of the star as the root.
\end{thm}
\begin{proof}
Note that
$$|E(G)|=|V(G)|-1\leq mi+i-2.$$
So, by Proposition \ref{prop-clump-tree-1}, one can remove no more than $i-2$ edges from $G$ to get a forest $F$ whose trees $T_1,T_2,\cdots, T_{k}$ are of clump numbers not greater than $m$. Note that $k\leq i-1$. So, by Theorem \ref{thm-mono} and Theorem \ref{thm-Steklov-clump},
\begin{equation}
\sigma_i(G)\geq\sigma_i(F)\geq \min\{\sigma_2(T_1),\cdots,\sigma_2(T_k)\}\geq \Lambda(m)
\end{equation}
by noting that the Steklov spectrum of the forest $F$ is the disjoint union of the Steklov spectrums of its trees: $T_1,T_2,\cdots, T_k$.

When $|V(G)|=mi+1$, that is $|E(G)|=mi$, if $G$ is  not a star of degree $i$ with each arm a minimal broom $\Br(m)$, by Proposition \ref{prop-tree-sub-k}, one can remove no more than $i-2$ edges to get a forest whose trees: $T_1,T_2,\cdots, T_k$ are all sub-$m$.  In this case, similarly as before,
\begin{equation}
\sigma_i(G)\geq \sigma_i(F)\geq \min\{\sigma_2(T_1),\cdots,\sigma_2(T_k)\}> \Lambda(m).
\end{equation}
This means that if the equality of \eqref{eq-sigma-i-tree} is achieved, then $G$ must be a  star of degree $i$ with each arm a minimal broom $\Br(m)$. This completes the proof of theorem. \end{proof}
By Theorem \ref{thm-sigma-i-tree} and Lemma \ref{lem-graph-tree}, we can prove (2) of Theorem \ref{thm-main}
\begin{proof}[Proof for (2) of Theorem \ref{thm-main}]
Let $T$ be a spanning tree of $G$. Then, by Theorem \ref{thm-mono} and Theorem \ref{thm-sigma-i-tree},
\begin{equation}
\sigma_i(G)\geq \sigma_i(T)\geq \Lambda(m).
\end{equation}
When $V(G)=mi+1$ and the equality holds, by Theorem \ref{thm-sigma-i-tree}, $T$  must be a star of degree $i$ with each arm a minimal broom $\Br(m)$. Finally, by Lemma \ref{lem-star-broom} and Lemma \ref{lem-graph-tree}, $G=T$. This completes the proof.
\end{proof}
We finally come to deal with the case $i|n$ in Theorem \ref{thm-main}. We first solve the extremal problem for trees.
\begin{thm}
Let $G$ be a finite combinatorial tree and $2<i<|V(G)|$ with $i|\ |V(G)|$. Then
\begin{equation}
\sigma_i(G)\geq \Lambda(m-1+\theta_i)
\end{equation}
where $m=\frac{V(G)}{i}$ and $\theta_i=\frac{1}{4\cos^2\frac{\pi}{2i}}$. The equality holds if and only if $G$ is the regular comb $\Comb(P_i;T)$ where $P_i$ is a path on $i$ vertices and $T$ is the minimal broom $\Br(m-1+\theta_i)$ with the Dirichlet boundary vertex deleted and the vertex adjacent to Dirichlet boundary vertex as the root.
\end{thm}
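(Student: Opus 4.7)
The plan is to apply Proposition \ref{prop-tree-AB} with $k = m$: since $|E(G)| = im - 1 \geq m - 1$, the tree $G$ is either of type A or type B with respect to $m$, and I handle these two cases separately. The type B case is easy and yields a strict inequality; the type A case is the main case and supplies the sharp constant $\theta_i$.

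If $G$ is of type B with respect to $m$, then by definition I can remove at most $i - 2$ edges to obtain a forest $F$ each of whose trees has clump number at most $m - 1$. Since removing any edge of a tree disconnects it, $F$ consists of exactly $i - 1$ trees $T_1, \ldots, T_{i-1}$. By Theorem \ref{thm-mono} and the block-diagonal structure of the Steklov spectrum on a forest, $\sigma_i(G) \geq \sigma_i(F) = \min_{j} \sigma_2(T_j)$, and by Theorem \ref{thm-Steklov-clump}, $\sigma_2(T_j) \geq \Lambda(m-1) > \Lambda(m-1+\theta_i)$. So equality can only arise in type A.

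If $G$ is of type A, then $|E(G)| = im - 1$ and I carry out a nodal-domain analysis. Let $f$ be an eigenfunction of $\sigma_i(G)$ with nodal domains $U_1, \ldots, U_k$; in generic position, the $k - 1$ edges of $G$ carrying zeros of $\wt f$ form a disconnecting cutset. The heart of the matter is the maximal case $k = i$: here each $G_{U_j}$ has $\lambda_1(G_{U_j}) = \sigma_i(G)$ by Theorem \ref{thm-nodal-domain}, Theorem \ref{thm-lambda-1} gives
$$\sigma_i(G) \geq \Lambda(l_j, n_j)$$
with $n_j = |E(G_{U_j}[\Omega_D])|$ and $l_j$ the reciprocal of the Dirichlet-edge weight sum of $G_{U_j}$, and the global bookkeeping gives $\sum_j n_j = |E(G)| - (i-1) = i(m-1)$. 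At each original vertex $x \in V(G)$ interior to some nodal domain, the harmonic equation $\Delta_G f(x) = 0$ rewrites via the piecewise-linear geometry of $\wt f$ into a Corollary \ref{cor-top-eigen}-type identity for the reciprocals $1/|[xz_{xy}]|$; summing across the $i$ nodal domains and combining with the extremal value $\mu_i(P_i) = 1/\theta_i$ from Proposition \ref{prop-min-top}, I expect this to force at least one nodal domain $U_{j_0}$ to satisfy $l_{j_0} \leq \theta_i$ with $n_{j_0} \leq m-1$, yielding $\sigma_i(G) \geq \Lambda(l_{j_0}, n_{j_0}) \geq \Lambda(\theta_i, m-1) = \Lambda(m-1+\theta_i)$.

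For the equality statement, the forward direction is an immediate consequence of Corollary \ref{cor-sigma-i-comb} applied with base $P_i$ and tooth $T$: $\sigma_i(\Comb(P_i; T)) = \lambda_1(T_i) = \Lambda(m-1+\theta_i)$. For the converse, equality in the type A chain forces (i) each $G_{U_j}$ to be the minimal broom $\Br(m-1+\theta_i)$ by the rigidity clause of Theorem \ref{thm-lambda-1}, (ii) the quotient graph induced on the nodal domains to realise equality in Proposition \ref{prop-min-top}, which forces the quotient to be the unweighted path $P_i$ (the cycle alternative is excluded because $G$ is a tree), and (iii) the teeth to glue uniformly by the alternating-sign rigidity of Proposition \ref{prop-top-eigen}; assembling yields $G = \Comb(P_i; T)$. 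The hard part will be the global accounting in the type A case: converting the vertex-local Corollary \ref{cor-top-eigen}-identities into a tight bound on $\max_j \Lambda(l_j, n_j)$ without loss is a delicate combinatorial-variational argument, and separately the subcase $k < i$ must be ruled out or reduced, likely via an orthogonal-perturbation argument showing that whenever $G$ is of type A, some eigenfunction for $\sigma_i(G)$ attains the full count $k = i$ of nodal domains.
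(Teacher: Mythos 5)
Your type B argument and your handling of the equality case match the paper, but the core of your type A argument has a genuine gap that the paper's proof is specifically designed to avoid. You propose to analyze the nodal domains of an actual eigenfunction $f$ of $\sigma_i(G)$, and you yourself flag the two places where this breaks down: (a) there is no reason an eigenfunction of $\sigma_i(G)$ should have $i$ nodal domains (discrete Courant-type results bound the count from above, not below, and your suggested ``orthogonal-perturbation argument'' to force $k=i$ is not supplied), and (b) even granting $k=i$, the zeros $z_{xy}$ of $\wt f$ on the cut edges do not satisfy a Corollary \ref{cor-top-eigen}-type identity. That identity requires \emph{every} neighbor of $x$ to lie across a sign change (the alternating-sign property of the top eigenfunction of a bipartite graph); at a vertex of $G$ adjacent to a single cut edge, the harmonic equation $\Delta_G f(x)=0$ mixes in the neighbors inside the same nodal domain and yields no clean control on $1/|[xz_{xy}]|$. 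So the ``delicate combinatorial-variational argument'' you defer is not a technicality--it is the missing proof.

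The paper's route in the type A case is different and closes exactly this gap. It removes the $i-1$ edges guaranteed by the type A definition to get $i$ subtrees of $m-1$ edges each, forms the quotient tree $G'$ on these $i$ subtrees, and takes the \emph{top Laplacian} eigenfunction $\varphi$ of $G'$. Proposition \ref{prop-top-eigen} gives alternating signs, so $\wt\varphi$ has a zero $z_e$ on every quotient edge, and Corollary \ref{cor-top-eigen} gives the exact identity $\sum_e w_e=\mu_i(G')$ for the Dirichlet-edge weights of each piece $\wt T_j$. The eigenvalue bound then comes not from a nodal-domain analysis but from a dimension count: the span $U$ of the first $i$ Steklov eigenfunctions ($\dim U=i$) must intersect the space $W$ of functions whose linear interpolation vanishes at all the $z_e$ ($\dim W\ge |V(G)|-i+1$) nontrivially, and any $g\in U\cap W$ is a test function whose Rayleigh quotient splits over the $\wt T_j$ and is bounded below by $\Lambda(1/\mu_i(G'),m-1)\ge\Lambda(\theta_i,m-1)=\Lambda(m-1+\theta_i)$ via Theorem \ref{thm-lambda-1} and Proposition \ref{prop-min-top}. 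You would need either to adopt this test-function construction or to actually prove the two claims you postponed; as written, the type A lower bound is not established.
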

\begin{proof}
Note that $|E(G)|=im-1$, by Proposition \ref{prop-tree-AB}, $G$ is either of type $A$ or type $B$.

When $G$ is of type $B$, by removing $i-2$ edges from $G$ we are left a forest $F$ consisting of $i-1$ trees: $T_1,T_2,\cdots, T_{i-1}$ with clump numbers all no more than $m-1$. Then, by Theorem \ref{thm-mono} and Theorem \ref{thm-Steklov-clump},
\begin{equation*}
\begin{split}
\sigma_i(G)\geq \sigma_i(F)\geq\min\{\sigma_2(T_1),\cdots,\sigma_2(T_{i-1})\}\geq \Lambda(m-1)>\Lambda(m-1+\theta_i).
\end{split}
\end{equation*}

When $G$ is of type A, we can remove $i-1$ edges to get a forest $F$ of $i$ trees $T_1,T_2,\cdots, T_i$ that are all of $m-1$ edges. Let $G'$ be a graph with
\begin{equation*}
V(G')=\{T_1,T_2,\cdots, T_i\}
\end{equation*}
and
\begin{equation*}
E(G')=\{\{T_r,T_s\}\ |\ \mbox{ There is an edge joining}\ T_r
\  {\rm and}\ T_s\ {\rm in }\ G.\}
\end{equation*}
It is clear that $G'$ is a tree. Let $\varphi$ be a Laplacian eigenfunction of $G'$ for $\mu_i(G')$. By Proposition \ref{prop-top-eigen}, $\varphi$ has alternating signs. So, on each edge $e$ of $|K(G')|$, there is zero point $z_e$ of $\wt\varphi$. Because each edge of $e$ corresponding to an edge in $G$, we can simply view $z_e$ as a point on the corresponding edge of $|K(G)|$. Let $\wt T_1,\wt T_2,\cdots, \wt T_{i}$ be the connected components of $|K(G)|$ with all the $z_e$'s deleted such that
$\wt T_j\supset T_j$ for $j=1,2,\cdots, i$. By Corollary \ref{cor-top-eigen},
\begin{equation}\label{eq-weight}
\sum_{e\in E(B_D(\wt T_j),\Omega_D(\wt T_j))}w_e=\mu_i(G').
\end{equation}
for $j=1,2,\cdots, i$.

Let $f_1\equiv 1,f_2,\cdots,f_{i}\in \R^{V(G)}$ be an orthogonal system of Steklov eigenfunctions for $G$ such that $f_j$ is an eigenfunction of $\sigma_j(G)$ for $j=1,2,\cdots,i$. Let $U=\mbox{span}\{f_1,f_2,\cdots, f_{i}\}$ and $$W=\{f\in \R^{V(G)}\ | \ \wt f(z_e)=0\mbox{ for all }z_e.\}.$$
Note that $\dim U=i$ and $\dim W\geq |V(G)|-i+1$. So $U\cap W\neq 0$. Let $g\in U\cap W $ be a nonzero function. Then, by Theorem \ref{thm-lambda-1}, Proposition \ref{prop-min-top} and \eqref{eq-weight},
\begin{equation*}
\begin{split}
\sigma_i(G)\geq\frac{\vv<dg,dg>_{G}}{\vv<g,g>_{B(G)}}=&\frac{\sum_{j=1}^i\vv<d\wt g,d\wt g>_{\wt T_j}}{\sum_{j=1}^i\vv<\wt g,\wt g>_{B(\wt T_j)}}\geq \min_{1\leq j\leq i}\left\{\frac{\vv<d\wt g,d\wt g>_{\wt T_j}}{\vv<\wt g,\wt g>_{B(\wt T_j)}}\right\}\\
\geq&\min\{\lambda_1(\wt T_1),\lambda_1(\wt T_2),\cdots,\lambda_1(\wt T_i)\}\\
\geq&\Lambda\left(\frac{1}{\mu_i(G')},m-1\right)\geq\Lambda\left(\frac{1}{\mu_i(P_i)},m-1\right)=\Lambda(m-1+\theta_i).\\
\end{split}
\end{equation*}
When the equality holds, we know that $G$ must be of type A and $G'$ must be a path on $i$ vertices. Moreover, the function $g$ before must be an eigenfunction for $\sigma_i(G)$. Let $N_1,N_2,\cdots, N_s$ be the nodal domains of $g$. It is clear that each $N_j$ is contained in some $\wt T_\nu$. Then, by Theorem \ref{thm-nodal-domain} and Theorem \ref{thm-lambda-1},
\begin{equation}
\Lambda(m-1+\theta_i)=\sigma_i(G)=\lambda_1(N_j)\geq \lambda_1(\wt T_\nu)\geq \Lambda(m-1+\theta_i).
\end{equation}
The inequality $\lambda_1(N_j)\geq \lambda_1(\tilde T_\nu)$ comes from the same argument as in \eqref{eq-zero-extension} by zero extension. Thus, by the rigidity part of Theorem \ref{thm-lambda-1} and Corollary \ref{cor-sigma-i-comb}, we completes the proof of the theorem.
\end{proof}
Finally, we come to prove (3) of Theorem \ref{thm-main}.
\begin{proof}[Proof for (3) of Theorem \ref{thm-main}]
Let $G'$ be a spanning tree of $G$. By Theorem \ref{thm-mono},
\begin{equation}
\sigma_i(G)\geq\sigma_i(G')\geq \Lambda(m-1+\theta_i)
\end{equation}
When the equality holds, by the rigidity part of the last theorem, we know that every spanning tree of $G$ must be isomorphic to $\Comb(P_i;T)$. By \cite[Theorem in P.424]{Ve}, we know that $G=\Comb(P_i;T)$ or $\Comb(C_i;T)$.  When $i$ is even, because $$\mu_i(C_i)=4>\mu_i(P_i),$$
by Corollary \ref{cor-sigma-i-comb}, we know that
\begin{equation}
\sigma_i(\Comb(C_i,T))>\sigma_i(\Comb(P_i,T)).
\end{equation}
When $i$ is odd, by \cite[P. 9]{BR},
$$\mu_i(C_i)=\mu_i(P_i).$$
So, by Corollary \ref{cor-sigma-i-comb},
\begin{equation}
\sigma_i(\Comb(C_i,T))=\sigma_i(\Comb(P_i,T))=\Lambda(m-1+\theta_i).
\end{equation}
This completes the proof.
\end{proof}

\end{document}